\documentclass[11pt]{article}
\usepackage{amsmath, amsfonts, amsthm, amssymb, hyperref}
\usepackage{mathabx}
\usepackage{authblk, color, bm, graphicx}
\usepackage[margin = 1.5cm, font = small, labelfont = bf, labelsep = period]{caption}
\usepackage{xspace}
\usepackage[utf8]{inputenc}
\usepackage[round]{natbib}
\usepackage{rotating, multirow}
\usepackage{xcolor}
\usepackage{fullpage}[2cm]
\newtheorem{assumption}{Assumption}
\usepackage{cleveref, booktabs, graphicx, amsmath, subcaption, bm, tikz, amssymb, breqn, environ, algorithm, algpseudocode, titlesec}
\allowdisplaybreaks

\usepackage{algorithm}
\usepackage{algpseudocode}
\usepackage{tikz}
\usepackage{graphicx, booktabs, graphicx, amsmath, subcaption, cancel, bm, hyperref, cleveref, amsfonts, amssymb} 
\usepackage{booktabs}
\usepackage{pifont}
\newcommand{\cmark}{\ding{51}}
\newcommand{\xmark}{\ding{55}}

\newcommand{\mb}[1]{\mathbb{#1}}
\newcommand{\mc}[1]{\mathcal{#1}}
\newcommand{\bzz}{\bm{\zeta}_0}
\newcommand{\bz}{\bm{\zeta}}
\newcommand{\mcge}{\mathcal{G}(\mathcal{E})}
\newcommand{\Yy}{(\bm{Y}, \bm{y})}
\newcommand{\Yyi}{(\bm{Y}^i, \bm{y}^i)}
\newcommand{\Yyj}{(\bm{Y}^j, \bm{y}^j)}
\newcommand{\hatYy}{(\hat{\bm{Y}}, \hat{\bm{y}})}

\usepackage{changepage}
\newenvironment{tightquote}
  {\begin{adjustwidth}{1.4em}{1.4em}\itshape}
  {\end{adjustwidth}}

\newtheorem{definition}{Definition}
\newtheorem{theorem}{Theorem}
\newtheorem{proposition}{Proposition}
\newtheorem{lemma}{Lemma}

\newtheorem{corollary}{Corollary}

\title{When Is Two-Stage Sample Robust Optimization Asymptotically Optimal? A Simple Perspective}

\date{}

\author{Ling Dai}
\author{Chin Pang Ho}

\affil{\small \textit{Department of Data Science, City University of Hong Kong} \protect\\ \texttt{ling.dai@cityu.edu.hk}, \texttt{clint.ho@cityu.edu.hk}}

\makeatletter
\newcommand{\customtaglabel}[2]{%
  \tag{#1}
  \edef\@currentlabel{#1}
  \label{#2}
}
\makeatother

\usepackage{color-edits}
\addauthor{dl}{magenta}
\addauthor{cp}{blue}
\addauthor{zc}{violet}
\addauthor{rv}{brown}

\begin{document}

\maketitle

\begin{abstract}
Two-stage sample robust optimization with linear decision rules is a standard data-driven approach to two-stage stochastic linear programs with unknown distributions. \cite{bertsimas2022two} established that this approach is asymptotically optimal under several conditions. Most are mild, but one is technical, and whether it follows from the others was left as an open question. We resolve this open question through a single geometric feature of the support set: simpleness, the property that every vertex is incident to exactly as many edges as the ambient dimension. When the support set is simple, the technical condition is automatic, and asymptotic optimality holds under the remaining standard assumptions. When the support set is not simple, there exists a problem instance on which the technical condition fails, and asymptotic optimality breaks down. This settles the open question in the negative and yields a sharp dichotomy. The dichotomy has direct practical consequences, since widely used support sets may not be simple. For such supports, we provide a polynomial-time algorithm that certifies the technical condition at any given vertex using only its incident edge directions. Our case study illustrates the effectiveness of our algorithm.

\textbf{Keywords:} two-stage stochastic programming, robust optimization, polyhedron theory
\end{abstract}

\section{Introduction} 

Two-stage stochastic linear programs arise across operations research \citep{dantzig1955linear, beale1955minimizing}. They model decisions in which an initial choice must be made under uncertainty, after which a recourse decision is made once the uncertainty is realized. There is a wide range of applications of this prominent framework \citep{birge1997introduction}, including production and capacity planning \citep{nishizaki2023two, yu2025value} and network design \citep{sen2016mitigating, koutsokosta2024stochastic}. In all of these applications, a decision maker chooses the first-stage variables to minimize the sum of an immediate cost and the expected second-stage cost.

The central difficulty in solving two-stage stochastic linear programs is that the true distribution of the underlying randomness is rarely known. Instead, one typically has access to a finite sample of historical observations together with information on the support. To address this challenge, researchers have developed a variety of methods that solve the programs without requiring the exact distribution \citep{berkelaar2002primal, sen2016mitigating, gangammanavar2021stochastic, pasupathy2021adaptive, bertsimas2023data}. However, most existing methods cannot achieve both asymptotic optimality and sample efficiency simultaneously. Specifically, \cite{berkelaar2002primal} propose a primal-dual decomposition-based interior point method, but it offers no convergence guarantee. On the other hand, the stochastic decomposition method \citep{sen2016mitigating, gangammanavar2021stochastic} and the sample average approximation method \citep{pasupathy2021adaptive} both enjoy convergence guarantees, but they require a large number of samples. This requirement severely limits their applicability in settings with limited historical data \citep{bertsimas2022two}.

To effectively approximate a two-stage stochastic linear program from limited data, a substantial line of work has turned to distributionally robust optimization \citep{hanasusanto2018conic, xie2020tractable, bertsimas2022two, bertsimas2023data}. In this approach, the decision maker seeks the first-stage decision that performs best in expectation under an adversarially chosen probability distribution from a prescribed ambiguity set. Although two-stage distributionally robust optimization with a Wasserstein ambiguity set is known to be NP-hard \citep{feige2007robust, hanasusanto2018conic}, considerable effort has been devoted to designing tractable approximations. Two recent works are particularly influential in this direction. First, \citet{bertsimas2023data} establish an equivalence between two-stage distributionally robust optimization with the type-$\infty$ Wasserstein ambiguity set and two-stage sample robust optimization (SRO). Building on this, \citet{bertsimas2022two} propose a multi-policy approximation for SRO and prove that, under suitable assumptions, this approximation with linear decision rules is asymptotically optimal. Together, these results establish the multi-policy SRO with linear decision rules as a powerful and practical framework for two-stage stochastic linear programs, combining both asymptotic optimality and sample efficiency.

The convergence result, however, relies on certain conditions on the problem. For self-containedness, we include it as \Cref{thm:asympt_opt} in the next section. Most of these conditions are mild and standard in stochastic optimization, and they are easy to verify in applications. However, one technical condition is significantly different. More precisely, it asks whether there exists a single first-stage decision under which affine recourse policies are locally feasible everywhere in the support. This condition is geometrically local in nature, and it has no obvious counterpart in the classical literature on two-stage stochastic programs. In fact, \citet{bertsimas2022two} themselves conjectured that this condition might be redundant. Despite their efforts, they could not find an instance satisfying their other standard assumptions but violating this one. They further remarked that ``The question of whether these conditions are equivalent is open for future research.'' This question has remained open since their original preprint appeared in $2020$. Resolving it matters in two ways. On the one hand, if the condition is redundant, then it can be dropped from the convergence theorem, and the asymptotic optimality of multi-policy SRO with linear decision rules holds under genuinely mild assumptions. On the other hand, if the condition is not redundant, then practitioners face a more serious issue. Without further guidance, they cannot tell whether two-stage SRO yields a trustworthy approximation for the original problem. They therefore need a way to recognize when the condition may fail, and ideally a way to verify it on a given instance.

We resolve this question by changing the angle of attack. In detail, we observe that the other mild conditions are global in nature, while the technical condition is local. This contrast motivates two feasibility notions for recourse systems on the support: global feasibility and locally linear feasibility (see Definitions~\ref{def:gf} and \ref{def:llf}). These notions reframe the above open question as a sharp polyhedral question: when does global feasibility imply locally linear feasibility?

The answer depends on a single geometric feature of the support set. A polyhedron is called simple when every vertex meets exactly as many edges as the ambient dimension, which coincides with vertex non-degeneracy in the sense of linear programming. For example, boxes and simplices are simple, while octahedra are not. Our main result establishes a dichotomy along this feature. If the support set is simple, then global feasibility implies locally linear feasibility for every recourse system on the support. Conversely, if any vertex of the support set is non-simple, then there exists a recourse system on the support for which global feasibility holds but locally linear feasibility fails at that vertex. In the language of \citet{bertsimas2022two}, the technical condition is automatic from the standard conditions if the support set is a simple polyhedron, but it is not automatic on non-simple supports. Therefore, the open question is resolved in the negative.

To make the negative answer concrete, we first provide an explicit three-dimensional example. The support of the problem instance is a polytope with a single non-simple vertex, and the recourse system is globally feasible on the whole support but not locally linear feasible at that vertex. This example gives a verifiable counterexample to the open question and shows directly how the technical condition can fail while the global feasibility condition holds. We then show that this example is not isolated. Using an interpolation argument on the tangent cone of a non-simple vertex, we prove that every polyhedral support with a non-simple vertex admits a recourse system with the same failure. Thus, the explicit example illustrates the phenomenon, while the general theorem shows that non-simpleness of the polyhedral support is exactly the geometric obstruction.

The dichotomy answers the open question of \cite{bertsimas2022two}, but a separate question remains: how can locally linear feasibility be checked on a possibly non-simple polyhedral support? To address this, we develop a verification algorithm. Given the recourse system together with the local geometric information at a vertex, namely the directions of the edges incident to that vertex, the algorithm decides in polynomial time whether locally linear feasibility holds at that vertex. The algorithm is based on a characterization of locally linear feasibility through an active index set. By iteratively constructing surrogates for this active index set, the algorithm verifies locally linear feasibility in finitely many iterations. Moreover, each iteration solves at most a linear number of linear programs whose sizes are polynomial in the input. Therefore, the total running time is polynomial in the dimensions of the recourse system and the number of incident edges. 

Taken together, the dichotomy and the verification algorithm yield a direct operational picture for the practitioner. The trustworthiness of the SRO approximation is governed by whether the support set is a simple polyhedron. Users of box or simplex supports are therefore on safe ground, since these supports are simple by construction, and the asymptotic optimality guarantee holds automatically under the remaining mild conditions. By contrast, supports built from budgeted uncertainty sets \citep{bertsimas2004price} often have vertices of high degree and are not simple. Such supports admit recourse systems for which the asymptotic optimality guarantee fails, so an additional check is required at the vertices of concern before the approximation can be trusted. In this latter case, our verification algorithm provides the check at low cost, using only the local geometric information at the vertex. We illustrate it further in \Cref{sec:casestudy} in our case study.

The contributions of this paper are summarized as follows.
\begin{enumerate}
\item We resolve the open question of \citet{bertsimas2022two} through a dichotomy based on a single geometric feature of the support set. To this end, we first reformulate the question using two feasibility notions for recourse systems, namely global feasibility and locally linear feasibility. We then show that the technical condition follows automatically from the standard conditions for every recourse system if and only if the support set is a simple polyhedron. This resolves the open question in the negative. To make the failure concrete, we further provide an explicit three-dimensional non-simple support together with a recourse system on which the technical condition fails.

\item We develop a polynomial-time algorithm that verifies the technical condition at any given vertex of a non-simple support. The algorithm requires only the directions of the edges incident to that vertex and the recourse system. Therefore, it offers a low-cost check of the technical condition on supports of practical interest, where prior to our work no such check was available.

\item We illustrate the effectiveness of our verification algorithm through a concrete case study on the two-stage multi-item newsvendor problem with uncertain demand supported on a budgeted uncertainty set. Our results show that our verification algorithm successfully checks the linear feasibility condition, and the numerical results validate the algorithm's scalability.
\end{enumerate}

The remainder of the paper is organized as follows. \Cref{sec:setup} introduces the two-stage sample robust optimization and states the asymptotic optimality theorem of \citet{bertsimas2022two}, and reformulates the open question through the two feasibility notions of global feasibility and locally linear feasibility. \Cref{sec:sufficient} proves the sufficient direction of the dichotomy. \Cref{sec:counterexamples} proves the necessary direction with an explicit three-dimensional counterexample. \Cref{sec:algorithm} develops the verification algorithm and analyzes its complexity. \Cref{sec:casestudy} reports the experimental results of the case study.

\textbf{Notation.} We denote vectors and matrices by boldface lowercase and uppercase letters, respectively. The all-ones vector and the $i$-th standard basis vector are denoted by $\bm{e}$ and $\bm{e}_i$, respectively, with dimensions determined by the context. We use $\bm{0}$ and $\bm{O}$ to denote the zero vector and the zero matrix, respectively, with dimensions either clear from the context or indicated by subscripts. For any $\bm{w} \in \mb{R}^n$, ${\rm Diag}(\bm{w})$ denotes the $n \times n$ diagonal matrix with $\bm{w}$ on its diagonal. For any positive integer $S$, we let $[S] = \{1, 2, \ldots, S\}$, and for any finite set $\mc{E}$, we let $|\mc{E}|$ denote its cardinality. Given vectors $\bm{\zeta}_1, \bm{\zeta}_2, \ldots, \bm{\zeta}_n$, we use ${\rm conv}\{\bm{\zeta}_1, \bm{\zeta}_2, \ldots, \bm{\zeta}_n\}$ and ${\rm span}\{\bm{\zeta}_1, \bm{\zeta}_2, \ldots, \bm{\zeta}_n\}$ to denote their convex hull and the linear space they span, respectively. The affine hull of a set $\Xi$ is denoted by ${\rm aff}(\Xi)$. The dimension of a polyhedron $\Xi$, denoted by ${\rm dim}(\Xi)$, refers to its affine dimension; that is, the dimension of the linear space parallel to ${\rm aff}(\Xi)$. Finally, we let $\mc{P}(\Xi)$ denote the set of all probability distributions supported on $\Xi$, and $U[a, b]$ denote the uniform distribution on the interval $[a, b]$.

\section{Preliminaries and the Open Question}\label{sec:setup}

Two-stage sample robust optimization is a standard data-driven approach to solve the two-stage stochastic linear program \citep{bertsimas2022two}. Specifically, we consider a two-stage problem 
\begin{equation}\label{eqn:two_stage}
v^* \triangleq \min_{\bm{x} \in \mathbb{R}^n} \left\{V^*(\bm{x}) \triangleq \bm{c}^{\top} \bm{x}+\mathbb{E} \left[Q(\bm{x}, \tilde{\bm \xi})\right]\right\},
\end{equation}
where the recourse function $Q$ is defined as
\[
Q(\bm{x},\bm{\xi}) \;\triangleq\; \min_{\bm{y}\in\mathbb{R}^{r}}\Bigl\{\bm{q}^{\top}\bm{y} \;:\; \bm{T}\bm{x}+\bm{W}\bm{y}\ge \bm{h}(\bm{\xi})\Bigr\}.
\]
Here, the decision variable $\bm{x}\in\mathbb{R}^{n}$ in~\eqref{eqn:two_stage} is the first-stage decision and $\bm{c}^{\top}\bm{x}$ is the first-stage cost. Random vector $\tilde{\bm{\xi}}\in\Xi\subset \mathbb{R}^{d}$ is governed by a distribution $\mb P$ on the polyhedron support $\Xi$, and $Q(\bm{x},\tilde{\bm{\xi}})$ is the random second-stage cost. The right-hand side $\bm{h}(\bm{\xi})=-\bm{H}\bm{\xi}+\bm{h}_{0}\in\mathbb{R}^{m}$ depends affinely on $\bm{\xi}$. Matrices $\bm{T}\in\mathbb{R}^{m\times n}$, $\bm{W}\in\mathbb{R}^{m\times r}$, and $\bm{H}\in\mathbb{R}^{m\times d}$, and vectors $\bm{c}\in\mathbb{R}^{n}$, $\bm{q}\in\mathbb{R}^{r}$ and $\bm{h}_{0}\in\mathbb{R}^{m}$ are given.

In practice, the distribution $\mathbb{P}$ of $\tilde{\bm{\xi}}$ is unknown. Instead, the decision maker only has access to the polyhedron support $\Xi$ and a finite sample $\bm{\xi}^{1},\dots,\bm{\xi}^{N}$. To address this challenge, it is natural to consider a distributionally robust formulation of the two-stage problem~\eqref{eqn:two_stage}. In particular, \cite{bertsimas2023data} show that the resulting distributionally robust two-stage problem with type-$\infty$ Wasserstein ambiguity set is equivalent to the following two-stage \emph{sample robust} problem
\begin{equation}\label{eqn:sample_robust}
    \begin{array}{rcl}
       \hat{v}_N^{\mathrm{SRO}} = & \displaystyle\min_{\bm x\in \mathbb R^n, \bm y(\cdot)\in \mathcal R^{d, r}} & \bm{c}^{\top} \bm{x} + \displaystyle\frac{1}{N} \sum_{i=1}^N \sup _{\boldsymbol{\zeta} \in \mathcal{U}_N^i} \bm q^\top \bm y(\bm \zeta)\\
       &{\rm s.t.}& \bm{T}\bm{x}+\bm{W}\bm{y}(\bm{\zeta})\ge\bm{h}(\bm{\zeta}) \quad \forall\bm{\zeta}\in\displaystyle\bigcup_{i=1}^{N}\mathcal{U}_{N}^{i},
    \end{array}
\end{equation}
where $\mathcal R^{d, r}$ denotes the set of all functions from $\mathbb{R}^{d}$ to $\mathbb{R}^{r}$, and $\mathcal{U}_{N}^{i}\triangleq\Xi\cap\{\bm{\xi}:\|\bm{\xi}-\bm{\xi}^{i}\|\le\epsilon_{N}\}$ is the uncertainty set centered at sample $\bm{\xi}^{i}$. The norm defining $\mc U^i_N$ is the one used in the type-$\infty$ Wasserstein distance, and $\epsilon_{N}$ is the radius of the ambiguity set. 

Program~\eqref{eqn:sample_robust} remains intractable because of the infinite-dimensional policy space $\bm y(\cdot)\in \mathcal R^{d, r}$ and the semi-infinite constraints. To address this, \citet{bertsimas2022two} propose a \emph{multi-policy approximation}, in which a separate policy $\bm{y}^{i}(\cdot)$ is used on each sample neighborhood $\mathcal{U}_{N}^{i}$. Within the framework of this multi-policy approximation, separable linear decision rules can be further imposed on each policy $\bm{y}^{i}(\cdot)$, leading to the following optimization problem: 
\begin{equation}\label{eqn:multi_policy}
    \begin{array}{rcl}
       \hat{v}_N^{\mathrm{MP}} = & \displaystyle\min_{\bm x\in \mathbb R^n, \bm y^1(\cdot), \ldots, \bm y^N(\cdot)\in \mathcal L} & \bm{c}^{\top} \bm{x} + \displaystyle\frac{1}{N} \sum_{i=1}^N \sup _{\boldsymbol{\zeta} \in \mathcal{U}_N^i} \bm q^\top \bm y(\bm \zeta)\\
       &{\rm s.t.}& \bm T\bm x+\bm W\bm y^i(\bm \zeta) \geq \bm h(\bm \zeta) \quad \forall \zeta \in \mathcal{U}_N^i, \; i\in [N],
    \end{array}
\end{equation}
where $\mathcal{L}=\{\bm{y}(\cdot)\in\mathcal{R}^{d,r}:\exists\bm{y}_{0}\in\mathbb{R}^{r},\,\bm{Y}\in\mathbb{R}^{r\times d}\text{ s.t. }\bm{y}(\bm{\xi})=\bm{y}_{0}+\bm{Y}\bm{\xi}\}$ is the set of all affine functions from $\mathbb R^d$ to $\mathbb R^r$. Since each $\mathcal{U}_{N}^{i}$ is the intersection of a polyhedron and a norm ball, program~\eqref{eqn:multi_policy} reduces to a linear program when the norm is $\ell_{1}$ or $\ell_{\infty}$, and to a second-order conic program when the norm is $\ell_{2}$. In all three cases, program~\eqref{eqn:multi_policy} is computationally tractable.



A natural question is whether programs~\eqref{eqn:sample_robust} and~\eqref{eqn:multi_policy} are asymptotically optimal. \citet{bertsimas2022two} answer this in the affirmative under some assumptions. We state their result in a fully unfolded form as follows. Specifically, we list the standard conditions A1--A3 in Theorem 2 of \cite{bertsimas2022two} and divide the technical condition A4 as the conjunction of its equivalent two parts A4a and A4b. The decomposition of A4 follows from Lemma~1 of \citet{bertsimas2022two}.

\begin{theorem}[Theorem 2 and Lemma 1 of \citealt{bertsimas2022two}]\label{thm:asympt_opt}
Let $\Xi$ be a polyhedron with at least one vertex. Assume that
\begin{itemize}
  \item[\textnormal{A1.}] Radius $\epsilon_{N}$ of $\mathcal U^i_N$ in \eqref{eqn:sample_robust} is with $\epsilon_{N}\to 0$ as $N\to\infty$.
  \item[\textnormal{A2.}] The optimal value $v^{\star}$ for \eqref{eqn:two_stage} is finite and $\mathbb{E}[\|\tilde{\bm{\xi}}\|]<\infty$.
  \item[\textnormal{A3.}] The set of optimal first-stage decisions for~\eqref{eqn:two_stage} is nonempty and bounded.
  \item[\textnormal{A4a.}] There exists $\bm{x}\in\mathbb{R}^{n}$ such that $Q(\bm{x},\bm{\zeta})<\infty$ for all $\bm{\zeta}\in\Xi$.
  \item[\textnormal{A4b.}] For the same $\bm{x}\in\mathbb{R}^{n}$ as in A4a and for some $\epsilon>0$, at every vertex $\bm{\zeta}_{0}$ of $\Xi$ there exists an affine policy $\bm{y}_{\bm{\zeta}_{0}}(\cdot)\in\mathcal{L}$ such that
        \[
            \bm{T}\bm{x}+\bm{W}\bm{y}_{\bm{\zeta}_{0}}(\bm{\zeta})\ge\bm{h}(\bm{\zeta}) \qquad \forall\, \bm{\zeta}\in\Xi\cap B(\bm{\zeta}_{0},\epsilon).
        \]
\end{itemize}
Then
\[
\lim_{N\to\infty}\hat{v}_{N}^{\text{SRO}} \;=\; \lim_{N\to\infty}\hat{v}_{N}^{\text{MP}} \;=\; v^{\star}\qquad a.s.,
\]
and any accumulation point of $\{\bm{x}_{N}^{\text{SRO}}\}_{N\in\mathbb{N}}$ or $\{\bm{x}_{N}^{\text{MP}}\}_{N\in\mathbb{N}}$ is almost surely an optimal first-stage decision for~\eqref{eqn:two_stage}, where $\bm{x}_{N}^{\text{SRO}}$ and $\bm{x}_{N}^{\text{MP}}$ are optimal first-stage decisions for~\eqref{eqn:sample_robust} and~\eqref{eqn:multi_policy}, respectively.
\end{theorem}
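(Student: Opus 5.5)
The plan is a sandwich argument. Since \eqref{eqn:multi_policy} restricts the policy space of \eqref{eqn:sample_robust} to piecewise-affine policies indexed by the samples, we have $\hat v_N^{\mathrm{SRO}}\le \hat v_N^{\mathrm{MP}}$. It therefore suffices to prove
\[
\liminf_{N\to\infty}\hat v_N^{\mathrm{SRO}}\ge v^\star
\qquad\text{and}\qquad
\limsup_{N\to\infty}\hat v_N^{\mathrm{MP}}\le v^\star
\]
almost surely. The statement about accumulation points of $\bm x_N^{\mathrm{SRO}}$ and $\bm x_N^{\mathrm{MP}}$ will then follow from the same two estimates, written as an epi-convergence statement for the first-stage objective.

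\textbf{Lower bound.} Since $\bm\xi^i\in\mathcal U_N^i$, each supremum satisfies
\[
\sup_{\bm\zeta\in\mathcal U_N^i}\bm q^\top\bm y(\bm\zeta)\ \ge\ \bm q^\top\bm y(\bm\xi^i)\ \ge\ Q(\bm x,\bm\xi^i).
\]
Hence $\hat v_N^{\mathrm{SRO}}$ is at least the sample average approximation value
\[
\min_{\bm x}\Bigl\{\bm c^\top\bm x+\tfrac1N\textstyle\sum_i Q(\bm x,\bm\xi^i)\Bigr\}.
\]
The function $Q(\cdot,\bm\xi)$ is convex, polyhedral and lower semicontinuous in $\bm x$. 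A2 gives finiteness and an integrable minorant, since the dual of the recourse problem yields $Q(\bm x,\bm\xi)\ge \bm\pi^\top(\bm h(\bm\xi)-\bm T\bm x)$ with $\mathbb E\|\tilde{\bm\xi}\|<\infty$. A3 gives a nonempty bounded solution set. Under these conditions, standard epi-convergence results for convex sample average approximation (the epigraphical law of large numbers) give that the sample average approximation value converges to $v^\star$ almost surely, and that its minimizers have accumulation points in the true solution set. This yields $\liminf\hat v_N^{\mathrm{SRO}}\ge v^\star$.

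\textbf{Upper bound.} Here I would use A1 and A4. Fix an optimal $\bm x^\star$ and let $\bar{\bm x}$ be the decision from A4a/A4b. Set $\bm x_\lambda=(1-\lambda)\bm x^\star+\lambda\bar{\bm x}$ for $\lambda\in(0,1]$. The key intermediate claim is that for every $\bm\zeta\in\Xi$ (not only vertices) there is a neighborhood on which some affine policy is feasible for $\bar{\bm x}$, with a uniform cost control.
\begin{itemize}
\item I would propagate A4b from vertices to arbitrary points by using the local structure of $\Xi$. Near any point $\bm\zeta$, the set $\Xi$ coincides with $\bm\zeta$ plus its tangent cone. This cone contains the relevant directions of a face reaching a vertex, so the vertex policy can be translated along that face.
\item I would then combine this with A4a. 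On the bounded set of directions, $Q(\bar{\bm x},\cdot)$ is finite and piecewise linear, hence Lipschitz, which gives the cost control.
\end{itemize}
For $\bm x^\star$, I would use an optimal recourse $\bm y^\star(\bm\zeta)$. This is piecewise affine in $\bm\zeta$ wherever it is finite, and is Lipschitz with cost at most $Q(\bm x^\star,\bm\xi^i)+L\epsilon_N$ on $\mathcal U_N^i$ after local affine selection. The convex combination $\lambda\bar{\bm y}+(1-\lambda)\bm y^\star$ is then feasible for $\bm x_\lambda$ on each $\mathcal U_N^i$ once $\epsilon_N$ is small. Its cost is bounded by
\[
(1-\lambda)Q(\bm x^\star,\bm\xi^i)+\lambda Q(\bar{\bm x},\bm\xi^i)+C\epsilon_N(1+\|\bm\xi^i\|).
\]
Averaging, using the strong law of large numbers together with $\mathbb E\|\tilde{\bm\xi}\|<\infty$ and $\epsilon_N\to0$, gives
\[
\limsup_{N\to\infty}\hat v_N^{\mathrm{MP}}\le (1-\lambda)V^*(\bm x^\star)+\lambda V^*(\bar{\bm x}).
\]
Letting $\lambda\downarrow0$ gives $\limsup\hat v_N^{\mathrm{MP}}\le v^\star$, provided $V^*(\bar{\bm x})<\infty$. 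If instead $V^*(\bar{\bm x})=+\infty$, I would replace $\bar{\bm x}$ by a convex combination of $\bar{\bm x}$ with $\bm x^\star$, which keeps A4 by convexity of the feasibility system.

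\textbf{Main obstacle.} I expect the hardest step to be the uniform local affine feasibility. This means showing that a single affine policy per neighborhood works on all of $\mathcal U_N^i$, with radius-independent constants, when $\bm\xi^i$ lies near a vertex or a lower-dimensional face. This is exactly where A4b is indispensable: near a vertex the feasibility region of $\bm y$ may vary non-affinely across the tangent cone, so A4a alone does not provide an affine selection. I would handle this by a finite covering of the relevant part of $\Xi$ by vertex neighborhoods from A4b and face-interior neighborhoods, using the finitely many faces of $\Xi$. Away from these neighborhoods I would rely on the piecewise-affine structure of $Q$ on the finitely many cells of its linearity complex.
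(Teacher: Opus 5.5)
The paper does not prove this theorem. It imports it from Bertsimas, Shtern and Sturt (Theorem 2 with Lemma 1), so your sketch has to stand on its own.

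\textbf{What works.} The lower-bound half is sound. Since $\bm\xi^i\in\mathcal U_N^i$, $\hat v_N^{\mathrm{SRO}}$ dominates the SAA value. An epigraphical law of large numbers, together with convexity and A3, then gives $\liminf_N\hat v_N^{\mathrm{SRO}}\ge v^\star$. Combined with $\hat v_N^{\mathrm{MP}}\to v^\star$, the liminf half of epi-convergence also yields the accumulation-point claims. Two small corrections here:
\begin{itemize}
\item Use the Artstein--Wets form of the epigraphical law, which needs only an integrable local minorant. Versions assuming $V^*$ finite on an open set do not apply in general: first-stage equality constraints encoded in the recourse, as in Corollary~\ref{coro:counterexample}, give $\mathrm{dom}\,V^*$ empty interior.
\item $\hat v_N^{\mathrm{SRO}}\le\hat v_N^{\mathrm{MP}}$ holds because a pointwise-optimal recourse is admissible in \eqref{eqn:sample_robust}, not because \eqref{eqn:multi_policy} restricts it; the neighborhoods overlap.
\end{itemize}

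\textbf{The gap.} It lies in the upper bound, exactly where the theorem has content. You treat $\bm x^\star$ as if $Q(\bm x^\star,\cdot)$ were finite on every $\mathcal U_N^i$ and admitted an affine selection of near-optimal recourse there. Neither is available.
\begin{itemize}
\item $\bm x^\star$ is feasible only $\mathbb P$-a.s., i.e.\ on the polyhedron $\{\bm\zeta:Q(\bm x^\star,\bm\zeta)<\infty\}\supseteq\mathrm{supp}\,\mathbb P$. This set can be strictly smaller than $\Xi$, and neighborhoods of samples near its boundary inside $\Xi$ reach outside it for every $N$, however small $\epsilon_N$ is. This is precisely the lack of relatively complete recourse that A4 exists to handle.
\item Even where $Q(\bm x^\star,\cdot)$ is finite, an affine feasible selection on a neighborhood is exactly locally linear feasibility. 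Theorems~\ref{thm:counterexample} and~\ref{thm:general_impossible} show this can fail near a non-simple vertex, and A4b grants it for $\bar{\bm x}$ only.
\end{itemize}
So $\lambda\bar{\bm y}(\bm\zeta)+(1-\lambda)\bm y^\star(\bm\zeta)$ is undefined exactly where it is needed, and mixing with $\bar{\bm x}$ does no work in your scheme. Your ``main obstacle'' paragraph diagnoses this obstruction for $\bar{\bm x}$ and then assumes it away for $\bm x^\star$.

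\textbf{A route that works.} On each neighborhood, freeze the $\bm x^\star$-part at an anchor $\bm a$, using a constant recourse $\bm y^\star_{\bm a}$ with $\bm T\bm x^\star+\bm W\bm y^\star_{\bm a}\ge\bm h(\bm a)$. This is trivially affine, and its cost is controlled by Lipschitz continuity of $Q$ on its domain. Then let an affine policy of $\bar{\bm x}$ absorb all the variation at a dilated point:
$\bm y(\bm\zeta)=(1-\lambda)\bm y^\star_{\bm a}+\lambda\bar{\bm y}\bigl(\bm a+(\bm\zeta-\bm a)/\lambda\bigr)$.
This is feasible for $\bm x_\lambda$ because $\bm h$ is affine, so $(1-\lambda)\bm h(\bm a)+\lambda\bm h\bigl(\bm a+(\bm\zeta-\bm a)/\lambda\bigr)=\bm h(\bm\zeta)$.

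The dilated points lie within $O(\epsilon_N/\lambda)\to0$ of the anchor. Hence you need local affine policies for $\bar{\bm x}$ with a radius uniform over all of $\Xi$ and cost growing at most linearly in $\|\bm a\|$. The dilated points must also stay where $\bar{\bm y}$ is valid. That is not automatic when $\bm\xi^i$ is near, but not on, a lower-dimensional face, so the anchor must be chosen using the face structure. Then $\bm x^\star$, or a slight shift of it toward $\bar{\bm x}$, must be feasible at the anchor.

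\textbf{Two secondary points.}
\begin{itemize}
\item Propagating A4b from vertices ``by translating the vertex policy along a face'' is not valid. Translating an affine policy along a bounded edge direction $\bm e$ needs some $\bm d$ with $\bm W\bm d+\bm H\bm e\ge\bm 0$, and A4a supplies such $\bm d$ only for recession directions (Lemma~\ref{lem:gf_charac}). An argument that works mixes the vertex policy, taken near a point of $[\bm\zeta_0,\bm\zeta]$ close to $\bm\zeta_0$, with a fixed globally feasible recourse at a point just beyond $\bm\zeta$ in the same face $F$. It uses $\mathcal T_{\bm\zeta}=\mathcal T_{\bm\zeta_0}+\mathrm{span}(F-\bm\zeta_0)$, and then needs compactness and recession arguments to make the radius uniform.
\item Your fallback for $V^*(\bar{\bm x})=+\infty$ is wrong, since a convex combination with $\bm x^\star$ need not be feasible on all of $\Xi$. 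It is also unnecessary. $Q(\bar{\bm x},\cdot)$ is finite on $\Xi$, so via the vertices of the dual feasible set $|Q(\bar{\bm x},\bm\zeta)|\le C(1+\|\bm\zeta\|)$, and A2 then gives $V^*(\bar{\bm x})<\infty$.
\end{itemize}
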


Theorem~\ref{thm:asympt_opt} is remarkable, since it establishes asymptotic optimality for both~\eqref{eqn:sample_robust} and~\eqref{eqn:multi_policy} in optimal values and in optimal solutions simultaneously. Among the listed conditions, A1--A3 and A4a are mild and standard. The condition A4b, however, is significantly more technical. It imposes a local affine-policy property at every vertex of $\Xi$ and is therefore difficult to verify in practice. \citet{bertsimas2022two} themselves note that the role of this technical condition is unclear:
\begin{quote}
``At first glance this condition might appear to be limiting; nonetheless, despite our efforts, we have been unable to find an instance of two-stage stochastic linear optimization which satisfies A2 and A3 and $\{\bm{x}\in\mathbb{R}^{n}:Q(\bm{x},\bm{\zeta})<\infty\;\forall\bm{\zeta}\in\Xi\}\neq\emptyset$ but does not satisfy A4.'' The authors further remark: ``The question of whether these conditions are equivalent is open for future research.''
\end{quote}

We refer to the gap left by \citet{bertsimas2022two} as the \emph{open question}. The goal of this paper is to resolve it. To formulate the open question precisely, we observe that A4a and A4b refer to the same first-stage decision $\bm{x}$. Setting $\bm{b}=\bm{h}_{0}-\bm{T}\bm{x}$, A4a becomes the feasibility of the \emph{recourse system}
\begin{equation}\label{eqn:linear_sys}
\bm{W}\bm{y}+\bm{H}\bm{\xi}\ge\bm{b}
\end{equation}
for every $\bm{\xi}\in\Xi$, while A4b becomes the existence, at each vertex $\bm{\zeta}_{0}$ of $\Xi$, of an affine map $\bm{y}(\bm{\xi})=\bm{y}_{0}+\bm{Y}\bm{\xi}$ that satisfies~\eqref{eqn:linear_sys} on a neighborhood of $\bm{\zeta}_{0}$ in $\Xi$. We make these two notions explicit.

\begin{definition}[Globally feasible]\label{def:gf}
The recourse system~\eqref{eqn:linear_sys} is \emph{globally feasible} on $\Xi$ if there exists a policy $\bm{y}(\cdot)\in\mathcal{R}^{d,r}$ such that $\bm{W}\bm{y}(\bm{\xi})+\bm{H}\bm{\xi}\ge\bm{b}$ for every $\bm{\xi}\in\Xi$.
\end{definition}

\begin{definition}[Locally linear feasible]\label{def:llf}
The recourse system~\eqref{eqn:linear_sys} is \emph{locally feasible} at a vertex $\bm{\zeta}_{0}$ of $\Xi$ if there exists $\epsilon>0$ and a policy $\bm y(\cdot) \in \mathcal R^{d, r}$ such that for all $\bm \xi \in \Xi\cap B(\bm \zeta_0, \epsilon)$,~\eqref{eqn:linear_sys} is valid by substituting $\bm y = \bm y(\bm \xi)$. Moreover, the recourse system~\eqref{eqn:linear_sys} is called \emph{locally linear feasible} at the vertex $\bm \zeta_0$ if the above policy can be selected as $\bm y(\cdot)\in \mathcal L$.
\end{definition}

With Definitions~\ref{def:gf} and~\ref{def:llf}, condition A4a is exactly global feasibility of~\eqref{eqn:linear_sys}, and condition A4b is exactly local linear feasibility of~\eqref{eqn:linear_sys} at every vertex of $\Xi$. Therefore, the open question reduces to the following purely polyhedral question on recourse systems.
\vspace{2mm}
\begin{tightquote}
\textbf{Central Question:} For any given globally feasible recourse system~\eqref{eqn:linear_sys}, is it true (or under what conditions) that~\eqref{eqn:linear_sys} is also locally linear feasible at each vertex of $\Xi$? 
\end{tightquote}
\vspace{2mm}
If the above central question admits an unconditional ``yes'', then A4b follows from A4a, the technical condition reduces to a mild global feasibility statement, and A4b is redundant in Theorem~\ref{thm:asympt_opt}. If instead the answer is ``no'' for some polyhedron $\Xi$, then a globally feasible recourse system that fails local linear feasibility at some vertex of $\Xi$ produces an instance satisfying A2, A3, and A4a but violating A4b. Such an instance settles the open question of \citet{bertsimas2022two} in the negative.

The remainder of the paper studies the central question. Section~\ref{sec:sufficient} derives a polyhedral sufficient condition under which the answer is ``yes''. Section~\ref{sec:counterexamples} shows that failure of this condition forces the answer to be ``no''. Together, these two results yield the dichotomy stated in the introduction.

\section{The Sufficient Direction: Simple Polyhedral Supports}\label{sec:sufficient}
We now prove the sufficient direction of the dichotomy. The goal is to identify a geometric condition on the support set under which global feasibility automatically implies locally linear feasibility. The key condition is local simpleness of the support at the vertex under consideration. Before proving this result, we first record a Farkas-type characterization of global feasibility. 
\begin{lemma}[Farkas characterization of globally feasibility]\label{lem:gf_charac}
The recourse system~\eqref{eqn:linear_sys} is globally feasible if and only if every $\bm \mu \in \mathbb{R}_{+}^m$ with $\bm W^\top \bm \mu=\bm 0$ satisfies
\begin{equation*}
\bm \mu^{\top} \bm H \bm v_{\ell} \geq 0 \quad \text { and } \quad \bm \mu^{\top} (\bm b - \bm H\bm\zeta_s) \leq 0 \quad \forall \ell\in [L], \; s\in [S],
\end{equation*}  
where $\{\bm \zeta_s\}_{s\in [S]}$ and $\{\bm v_{\ell}\}_{\ell\in [L]}$ are vertices and extreme rays of the recession cone of $\Xi$, respectively. 
\end{lemma}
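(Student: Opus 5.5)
Global feasibility is a pointwise statement: since the policy $\bm y(\cdot)$ in Definition~\ref{def:gf} is arbitrary, the system is globally feasible if and only if for every $\bm\xi\in\Xi$ the polyhedron $\{\bm y:\bm W\bm y\ge \bm b-\bm H\bm\xi\}$ is nonempty. The plan is to apply Farkas' lemma at each fixed $\bm\xi$, and then use the Minkowski--Weyl decomposition of $\Xi$ to replace the quantifier ``for all $\bm\xi\in\Xi$'' by finitely many checks at vertices and extreme rays.

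\textbf{Step 1 (Farkas at a fixed point).} By the standard Farkas alternative, $\{\bm y:\bm W\bm y\ge \bm c\}\neq\emptyset$ if and only if every $\bm\mu\in\mathbb R^m_+$ with $\bm W^\top\bm\mu=\bm 0$ satisfies $\bm\mu^\top\bm c\le 0$. Taking $\bm c=\bm b-\bm H\bm\xi$, global feasibility becomes
\[
\bm\mu^\top(\bm b-\bm H\bm\xi)\le 0\qquad \forall\,\bm\mu\in\mc M,\ \forall\,\bm\xi\in\Xi,
\]
where $\mc M=\{\bm\mu\ge \bm 0:\bm W^\top\bm\mu=\bm 0\}$.

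\textbf{Step 2 (Minkowski--Weyl).} Since $\Xi$ has at least one vertex, it contains no lines. Hence $\Xi={\rm conv}\{\bm\zeta_s\}_{s\in[S]}+{\rm cone}\{\bm v_\ell\}_{\ell\in[L]}$. Fix $\bm\mu\in\mc M$. Then $g(\bm\xi)=\bm\mu^\top(\bm b-\bm H\bm\xi)$ is affine in $\bm\xi$.

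\textbf{Step 3 (necessity).} Assume global feasibility. Since each vertex lies in $\Xi$, we get $g(\bm\zeta_s)\le 0$. Since $\bm\zeta_1+t\bm v_\ell\in\Xi$ for all $t\ge 0$, we also have
\[
g(\bm\zeta_1+t\bm v_\ell)=g(\bm\zeta_1)-t\,\bm\mu^\top\bm H\bm v_\ell\le 0 .
\]
Letting $t\to\infty$ forces $\bm\mu^\top\bm H\bm v_\ell\ge 0$.

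\textbf{Step 4 (sufficiency).} Write $\bm\xi=\sum_s\lambda_s\bm\zeta_s+\sum_\ell\theta_\ell\bm v_\ell$ with $\bm\lambda$ in the simplex and $\bm\theta\ge\bm 0$. Then
\[
g(\bm\xi)=\sum_s\lambda_s\,g(\bm\zeta_s)-\sum_\ell\theta_\ell\,\bm\mu^\top\bm H\bm v_\ell\le 0 .
\]
Combined with Step 1, this gives global feasibility.

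\textbf{Main obstacle.} The only delicate point is bookkeeping rather than depth. We need to justify the Minkowski--Weyl representation with finitely many vertices and extreme rays, which follows from the assumption that $\Xi$ has a vertex (pointedness). We also need Step 1 to hold for every $\bm\xi$ separately, which is legitimate because $\bm y(\cdot)$ is unrestricted and can be chosen pointwise.
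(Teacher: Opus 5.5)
Your proof is correct and follows essentially the same route as the paper: apply Farkas' lemma at each fixed $\bm\zeta\in\Xi$, then use the Minkowski--Weyl representation of $\Xi$ by its vertices and the extreme rays of its recession cone. The only difference is presentational. The paper treats both directions at once by comparing the supremum over the simplex with the infimum over $\bm\beta\in\mathbb{R}^L_+$, which is either $0$ or $-\infty$. You instead split the argument into necessity (evaluating at the vertices and along rays with $t\to\infty$) and sufficiency (substituting the decomposition).
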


\begin{proof}[Proof of \Cref{lem:gf_charac}]
    By Farkas' lemma (see, \textit{e.g.} Theorem 3.4 in \citealt{conforti2014integer}), the system~\eqref{eqn:linear_sys} is feasible in $\bm y$ for fixed $\bm \zeta \in \Xi$ iff $\bm \mu^{\top}(\bm H\bm \zeta - \bm b) \geq 0$ for every $\bm \mu \in \mathbb{R}_{+}^m$ with $\bm W^\top \bm \mu =\bm 0$. Noting that $\bm \zeta = \sum_{s\in [S]}\alpha_s \bm \zeta_s + \sum_{\ell\in [L]} \beta_{\ell}\bm v_{\ell}$ for $\bm\alpha\in \Delta_S$ and $\bm \beta\in \mb R^L_+$, system~\eqref{eqn:linear_sys} is globally feasible iff 
    \begin{equation*}
        \bm \mu^{\top} \left(\bm b - \displaystyle\sum_{s\in [S]} \alpha_s \bm H\bm\zeta_s \right) \leq \displaystyle\sum_{\ell\in [L]} \beta_{\ell} \bm \mu^\top \bm H \bm v_{\ell} \quad \forall \bm \mu \in {\rm ker}(\bm W^\top) \cap \mb R^m_+, \; \bm\alpha\in \Delta_S, \; \bm \beta\in \mb R^L_+.
    \end{equation*}
Now, for any fixed $\bm \mu$, the supremum of the left-hand-side over $\bm \alpha$ is not greater than the infimum of the right-hand-side over $\bm \beta$. Noting that $\bm \beta\in \mb R^L_+$ is unbounded, the infimum of the right-hand-side is either $0$, if $\bm \mu^{\top} \bm H \bm v_{\ell} \geq 0$ for every $\ell$, or $-\infty$ otherwise. Thus, system~\eqref{eqn:linear_sys} is globally feasible iff 
    \begin{equation*}
        \bm \mu^{\top} \bm H \bm v_{\ell} \geq 0, \quad \text{ and } \quad \sup_{\bm\alpha\in \Delta_S} \bm \mu^{\top} \left(\bm b - \displaystyle\sum_{s\in [S]} \alpha_s \bm H\bm\zeta_s \right) \leq 0 \quad \forall \bm \mu \in {\rm ker}(\bm W^\top) \cap \mb R^m_+, \; \ell\in [L].
    \end{equation*}
  Noticing that the above supremum problem is a linear program on a compact polyhedron, the supremum is attained at one vertex of $\Delta_S$. Then the conclusion follows by substituting $\bm \alpha$ by all the canonical basis of $\mb R^S$. 
\end{proof}

Our above \Cref{lem:gf_charac} provides a sufficient and necessary characterization of a globally feasible recourse system. It has two uses in the sequel. First, it allows us to build affine policies when the local geometry of the support is simple. Second, it provides the certificate that will later be used to construct globally feasible systems that fail local linear feasibility. We begin with the simplest geometric case, where the support has a single vertex and its recession cone is simplicial.

\begin{proposition}\label{prop:gf_is_llf}
    If $\Xi$ is a translated simplicial cone; that is, $\Xi$ is with only one vertex $\bm \zeta_1$, and the recession cone of $\Xi$ has $d$ extreme rays $\bm v_1, \ldots, \bm v_d$ in $\mb R^d$, then any globally feasible system~\eqref{eqn:linear_sys} on $\Xi$ is also locally linear feasible at $\bm \zeta_1$. Moreover, the locally linear feasible policy can be selected globally, which serves as a feasible linear decision rule $\bm y(\cdot)$ on $\Xi$.
\end{proposition}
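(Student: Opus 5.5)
The plan is to build an explicit global affine policy by interpolating between feasible recourse decisions at the vertex and along each extreme ray. Since $\Xi=\bm\zeta_1+\mathrm{cone}\{\bm v_1,\ldots,\bm v_d\}$ with $d$ extreme rays in $\mb R^d$, and $\Xi$ is pointed with a single vertex, the rays $\bm v_1,\ldots,\bm v_d$ must be linearly independent. Otherwise the cone would either contain a line or have fewer than $d$ extreme rays. Hence every $\bm\xi\in\mb R^d$ has unique coordinates $\bm\beta(\bm\xi)=\bm V^{-1}(\bm\xi-\bm\zeta_1)$, where $\bm V=[\bm v_1,\ldots,\bm v_d]$. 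The map $\bm\beta(\cdot)$ is affine, and $\bm\xi\in\Xi$ if and only if $\bm\beta(\bm\xi)\ge\bm 0$. The candidate policy is
\[
\bm y(\bm\xi)=\bm y_1+\sum_{\ell\in[d]}\beta_\ell(\bm\xi)\,\bm z_\ell,
\]
which is affine in $\bm\xi$, so $\bm y(\cdot)\in\mc L$. It remains to choose $\bm y_1$ and $\bm z_\ell$ appropriately.

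For this choice I use \Cref{lem:gf_charac}. Global feasibility at the vertex $\bm\xi=\bm\zeta_1$ gives some $\bm y_1$ with $\bm W\bm y_1+\bm H\bm\zeta_1\ge\bm b$. For each ray $\ell$, I want $\bm z_\ell$ with $\bm W\bm z_\ell+\bm H\bm v_\ell\ge\bm 0$. By Farkas' lemma (the same version used in the proof of \Cref{lem:gf_charac}), such a $\bm z_\ell$ exists if and only if $\bm\mu^\top\bm H\bm v_\ell\ge 0$ for every $\bm\mu\in\mb R^m_+$ with $\bm W^\top\bm\mu=\bm 0$. This is exactly the ray condition that \Cref{lem:gf_charac} derives from global feasibility. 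Alternatively, one can argue directly: feasibility at $\bm\zeta_1+t\bm v_\ell$ for all $t>0$, divided by $t$, implies the homogeneous condition. The certificate route is cleaner, so I will cite the lemma.

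Then, for $\bm\xi\in\Xi$, summing these inequalities with the nonnegative weights $\beta_\ell(\bm\xi)$ gives
\[
\bm W\bm y(\bm\xi)+\bm H\bm\xi=\bigl(\bm W\bm y_1+\bm H\bm\zeta_1\bigr)+\sum_\ell\beta_\ell(\bm\xi)\bigl(\bm W\bm z_\ell+\bm H\bm v_\ell\bigr)\ge\bm b.
\]
This uses $\bm\xi=\bm\zeta_1+\sum_\ell\beta_\ell(\bm\xi)\bm v_\ell$. So the policy is feasible on all of $\Xi$, hence on $\Xi\cap B(\bm\zeta_1,\epsilon)$ for any $\epsilon>0$, which proves local linear feasibility together with the global claim.

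The main obstacle is mild: justifying that $d$ extreme rays of a pointed cone in $\mb R^d$ with vertex $\bm\zeta_1$ are linearly independent, so that the coefficients $\bm\beta$ are unique and affine in $\bm\xi$. That affineness is exactly where simpleness enters. I will argue it via full-dimensionality, or else simply interpret ``simplicial'' as meaning the rays form a basis.
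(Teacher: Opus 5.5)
Your proposal is correct and is essentially the paper's proof: the same vertex solution plus Farkas/\Cref{lem:gf_charac} ray certificates $\bm W\bm z_\ell+\bm H\bm v_\ell\ge\bm 0$, the same policy $\bm y_1+[\bm z_1,\ldots,\bm z_d]\bm V^{-1}(\bm\xi-\bm\zeta_1)$, and the same termwise summation with the nonnegative coordinates $\beta_\ell(\bm\xi)$. The one thing to fix is your justification of linear independence: pointedness alone does not force it (the cone over a square sitting in a $3$-dimensional subspace of $\mb R^4$ is pointed with four dependent extreme rays), so take it from the simplicial hypothesis or from full-dimensionality, as your stated fallback already does.
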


\begin{proof}[Proof of \Cref{prop:gf_is_llf}]
    By the definition of an extreme ray, vectors $\{\bm v_1, \ldots, \bm v_d\}$ are linearly independent in $\mb R^d$. Therefore, the matrix $\bm V = [\bm v_1, \ldots, \bm v_d] \in \mb R^{d\times d}$ is invertible.

    Assume that~\eqref{eqn:linear_sys} is a globally feasible recourse system, then, there exists $\bm z$ such that $\bm W\bm z + \bm H \bm \zeta_1 \geq \bm b$. By \Cref{lem:gf_charac}, every $\bm \mu \in \mathbb{R}_{+}^m$ with $\bm W^\top \bm \mu=\bm 0$ satisfies $\bm \mu^{\top} \bm H \bm v_{\ell} \geq 0$ for each ${\ell}\in [d]$. This condition is equivalent to the condition that there is some $\bm y_{\ell}$ with $\bm W\bm y_{\ell} + \bm H\bm v_{\ell} \geq \bm 0$ for each ${\ell}\in [d]$. Now, we design $\bm Y = [\bm y_1, \ldots, \bm y_d]\bm V^{-1} \in \mb R^{r\times d}$, and prove that $\bm y(\bm \zeta) = \bm Y(\bm \zeta - \bm \zeta_1) + \bm z$ is a desired locally linear feasible policy in the following. For any $\bm \zeta\in \Xi$, we write $\bm \zeta = \bm \zeta_1 + \sum_{{\ell}\in [d]} \lambda_{\ell} \bm v_{\ell}$, where $\bm \lambda\geq \bm 0$ is the barycentric coordinates. Then we can compute that
    \begin{equation*}
        \begin{array}{rl}
            \bm W\bm y(\bm \zeta) + \bm H\bm \zeta -\bm b = & \bm W\left([\bm y_1, \ldots, \bm y_d]\bm V^{-1}\left(\bm \zeta - \bm \zeta_1 \right) + \bm z \right) + \bm H\bm \zeta -\bm b\\
           = & \displaystyle\sum_{{\ell}\in [d]} \lambda_{\ell} \left( \bm W [\bm y_1, \ldots, \bm y_d]\bm V^{-1} \bm v_{\ell} + \bm H\bm v_{\ell} \right) + \bm W\bm z + \bm H \bm \zeta_1 - \bm b\\
           = & \displaystyle\sum_{{\ell}\in [d]} \lambda_{\ell} \left( \bm W \bm y_{\ell} + \bm H\bm v_{\ell} \right) + \bm W\bm z + \bm H \bm \zeta_1 - \bm b \geq \bm 0.
        \end{array}
    \end{equation*}
    Therefore, the above defined $\bm y(\bm \zeta)$ is actually a globally feasible (on $\Xi$) linear decision rule, which implies that~\eqref{eqn:linear_sys} is also locally linear feasible at the only vertex $\bm \zeta_1$.  
\end{proof}

As illustrated in the above proposition, the geometric characterization of $\Xi$ is essential in deriving the sufficient condition for locally linear feasibility. Moreover, the linear policy found in the proof is actually globally feasible on $\Xi$. Therefore, \Cref{prop:gf_is_llf} also provides a sufficient condition for the existence of a linear decision rule. There is a similar conclusion in Theorem 1 of \cite{bertsimas2012power}, which also finds a linear decision rule when $\Xi$ is a simplex. We would like to remark that there is no direct consequence of our \Cref{prop:gf_is_llf} and Theorem 1 of \cite{bertsimas2012power}, since \Cref{prop:gf_is_llf} assumes a stronger condition (exists globally feasible policy on a broader unbounded $\Xi$), and it achieves a stronger conclusion (exists linear decision rule on a broader unbounded $\Xi$). Therefore, our above result is of independent interest.

While the above result provides a sufficient condition, it is still unknown when locally linear feasibility is satisfied if $\Xi$ has multiple vertices. Our following theorem fills this gap by capturing the local geometric properties at each vertex, and generalizes the sufficient condition in \Cref{prop:gf_is_llf}.

\begin{theorem}\label{thm:no_more_d_edges}
Let $\bm \zeta_0$ be any vertex of $\Xi$, and suppose there are $p$ edges incident to the vertex $\bm \zeta_0$ on $\Xi$. If $p = \dim (\Xi)$, then any globally feasible system~\eqref{eqn:linear_sys} on $\Xi$ is also locally linear feasible at $\bm \zeta_0$. 
\end{theorem}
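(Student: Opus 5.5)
The plan is to reduce Theorem~\ref{thm:no_more_d_edges} to Proposition~\ref{prop:gf_is_llf} by working with the tangent cone of $\Xi$ at $\bm\zeta_0$. Let $k=\dim(\Xi)$ and let $\bm u_1,\ldots,\bm u_p$ be the directions of the $p=k$ edges incident to $\bm\zeta_0$. Near $\bm\zeta_0$ the polyhedron coincides with the translated cone $K=\bm\zeta_0+{\rm cone}\{\bm u_1,\ldots,\bm u_p\}$. Concretely, there is $\epsilon>0$ with $\Xi\cap B(\bm\zeta_0,\epsilon)=K\cap B(\bm\zeta_0,\epsilon)$: near a vertex only the constraints active at $\bm\zeta_0$ matter, and the cone they cut out is generated by the incident edge directions, since its extreme rays are exactly the edges through $\bm\zeta_0$. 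Because $\bm\zeta_0$ is a vertex, this cone is pointed. It spans the $k$-dimensional linear space parallel to ${\rm aff}(\Xi)$, so its $k$ generators are linearly independent, and $K$ is a translated simplicial cone inside ${\rm aff}(\Xi)$.

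Next I would show that global feasibility on $\Xi$ implies global feasibility on $K$. The vertex $\bm\zeta_0$ lies in $\Xi$, so the system is feasible there. For each edge direction $\bm u_\ell$, take a point $\bm\zeta_0+t\bm u_\ell\in\Xi$ with $t>0$. Lemma~\ref{lem:gf_charac} applied to $\Xi$ gives $\bm\mu^\top(\bm b-\bm H\bm\xi)\le 0$ for all $\bm\xi\in\Xi$ and all $\bm\mu\ge \bm 0$ with $\bm W^\top\bm\mu=\bm 0$. This alone does not give $\bm\mu^\top\bm H\bm u_\ell\ge 0$, because the edge may be bounded. So I would not apply Lemma~\ref{lem:gf_charac} to $K$ directly.

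Instead I would build the affine policy directly, mimicking the proof of Proposition~\ref{prop:gf_is_llf}. Choose $\bm z$ feasible at $\bm\zeta_0$, and for each $\ell$ choose $\bm z_\ell$ feasible at $\bm\zeta_0+t\bm u_\ell$, where $t>0$ is small enough that all these points lie in $\Xi$. Set $\bm y_\ell=(\bm z_\ell-\bm z)/t$. Extend $\bm u_1,\ldots,\bm u_k$ to a basis of $\mb R^d$, and define $\bm Y$ so that $\bm Y\bm u_\ell=\bm y_\ell$, with $\bm Y$ arbitrary (say zero) on the complementary directions. Let $\bm y(\bm\xi)=\bm z+\bm Y(\bm\xi-\bm\zeta_0)$.

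Now take $\bm\xi\in\Xi\cap B(\bm\zeta_0,\epsilon')$ with $\epsilon'$ small. Write $\bm\xi=\bm\zeta_0+\sum_\ell\lambda_\ell t\bm u_\ell$ with $\bm\lambda\ge\bm 0$ and $\sum_\ell\lambda_\ell\le 1$; this is possible for $\epsilon'$ small because the $\bm u_\ell$ are independent, so the coordinates depend continuously on $\bm\xi$. Then
\begin{equation*}
\bm W\bm y(\bm\xi)+\bm H\bm\xi-\bm b=\Bigl(1-\sum_\ell\lambda_\ell\Bigr)(\bm W\bm z+\bm H\bm\zeta_0-\bm b)+\sum_\ell\lambda_\ell(\bm W\bm z_\ell+\bm H(\bm\zeta_0+t\bm u_\ell)-\bm b)\ge\bm 0.
\end{equation*}
Equivalently, $\bm\xi$ lies in the simplex ${\rm conv}\{\bm\zeta_0,\bm\zeta_0+t\bm u_\ell\}$, and on a simplex affine interpolation of pointwise feasible recourse values is feasible. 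This is the same mechanism as in Proposition~\ref{prop:gf_is_llf}, and it in fact only uses pointwise feasibility at $k+1$ points of $\Xi$.

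The main obstacle is the geometric step: justifying that a neighborhood of $\bm\zeta_0$ in $\Xi$ equals a neighborhood of $\bm\zeta_0$ in the simplicial cone generated by the incident edges, with independent generators when $p=\dim(\Xi)$. I would argue this via the active constraints $\bm A_I\bm\xi\ge\bm a_I$ at $\bm\zeta_0$. For $\epsilon$ smaller than the slack of the inactive constraints, $\Xi\cap B(\bm\zeta_0,\epsilon)$ equals the active-constraint cone intersected with the ball. That cone is pointed, and its extreme rays are exactly the edge directions. A pointed cone of dimension $k$ with exactly $k$ extreme rays is simplicial. The remaining details are the standard facts that a pointed polyhedral cone is the conic hull of its extreme rays, and that these rays correspond to edges of $\Xi$ at $\bm\zeta_0$.
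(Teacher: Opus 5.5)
Your proof is correct and is essentially the paper's: you identify the tangent cone at $\bm\zeta_0$ with the cone generated by the $p=\dim(\Xi)$ incident edge directions, so it is simplicial, and then you interpolate pointwise-feasible recourse values affinely over the simplex $\mathrm{conv}\{\bm\zeta_0,\bm\zeta_0+t\bm u_1,\ldots,\bm\zeta_0+t\bm u_k\}$, which is the Bertsimas--Goyal construction the paper also uses. The differences are minor: you handle $\dim(\Xi)<d$ by extending the $\bm u_\ell$ to a basis of $\mathbb{R}^d$ rather than by the paper's affine change of coordinates, and you spell out why the simplex contains $\Xi\cap B(\bm\zeta_0,\epsilon')$, which the paper leaves implicit. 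One caution: the claim you abandoned in your second paragraph (global feasibility on $\Xi$ implies global feasibility on $K$) is actually false, not merely hard to prove. For $\Xi=[0,1]$ and the single constraint $-\xi\ge -1$, the system is globally feasible on $\Xi$ but not on $K=[0,\infty)$, so bypassing that step was necessary.
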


\begin{proof}[Proof of \Cref{thm:no_more_d_edges}]
We first consider the case $p = \dim(\Xi) = d$. Let the direction of the $d$ edges incident to the vertex $\bm \zeta_0$ be $\{\bm \eta_1, \ldots, \bm \eta_d\}$. W.L.O.G., we assume that the length of those directions is small enough so that $\bm \zeta_0 + \bm \eta_{\ell} \in \Xi$ for each $\ell\in [d]$. We then claim that the tangent cone of $\Xi$ at $\bm \zeta_0$, defined as $\mc T_{\bm \zeta_0} = \{\bm \eta \in \mb R^d ~:~ \exists \; \lambda \geq 0 \text{ such that } \bm \zeta_0 + \lambda \bm \eta\in \Xi\}$, has the extreme rays $\{\bm \eta_1, \ldots, \bm \eta_d\}$.

To see how the claim holds, we utilize the H-representation of $\Xi = \{\bm \xi ~:~ \bm G\bm \xi \leq \bm g\}$. Then it is true that the tangent cone $\mc T_{\bm \zeta_0} = \{ \bm \xi ~:~ \bm G_{\mc I} \bm \xi \leq \bm 0\}$, where $\mc I = \{i ~:~ \bm G_i \bm \zeta_0 = g_i\}$ is the index set of constraints active at $\bm \zeta_0$. Thus, $\mc T_{\bm \zeta_0}$ is a polyhedron cone. Moreover, $\bm \zeta_0$ is also a vertex of $\mc T_{\bm \zeta_0}$. Hence, $\mc T_{\bm \zeta_0}$ is a pointed polyhedron cone. By Theorem 3.35 of \cite{conforti2014integer}, any extreme ray $\bm r$ of $\mc T_{\bm \zeta_0}$ satisfies at equality $d-1$ linearly independent inequalities of $\{ \bm \xi ~:~ \bm G_{\mc I} \bm \xi \leq \bm 0\}$. W.L.O.G., let $\mc I = [I]$, and let $\bm G_1, \ldots, \bm G_{d-1}$ be linearly independent, and $\bm G_{\ell} \bm r = \bm 0$ for each $\ell\in [d-1]$. Then we construct the hyperplane $\mc H = \{\bm \xi ~:~ \bm \alpha^\top \bm \xi = \beta\}$, where the vector $\bm \alpha\in \mb R^d$ and scalar $\beta$ as
\begin{equation*}
    \bm \alpha = \sum_{\ell\in [d-1]}\bm G_{\ell}^\top \quad \text{ and } \quad \beta = \sum_{\ell\in [d-1]}\bm G_{\ell} \bm \zeta_0 = \sum_{\ell\in [d-1]}g_i. 
\end{equation*}
It can be verified that $\Xi \subset \mc H^- = \{\bm \xi ~:~ \bm \alpha^\top \bm \xi \leq \beta\}$ and $\{\bm \zeta_0 + \lambda \bm r ~:~ \lambda\geq 0\} \subset \mc H$. Therefore, $\Xi\cap \mc H$ is a face of $\Xi$. Moreover, any $\bm \zeta \in \Xi\cap \mc H$, we have $\bm G_{\ell} (\bm \zeta-\bm \zeta_0) = 0$ for each $\ell\in [d-1]$. Since $\bm G_1, \ldots, \bm G_{d-1}$ is linearly independent, the face $\Xi\cap \mc H$ is at most $1$-dimensional. By $\{\bm \zeta_0 + \lambda \bm r ~:~ \lambda\geq 0\} \subset \mc H$, we get this face is an edge. So the extreme ray $\bm r$ of tangent cone $\mc T_{\bm \zeta_0}$ is an edge direction of $\Xi$ at $\bm \zeta_0$.

On the other hand, consider any edge direction $\bm \eta_0$ at $\bm \zeta_0$. Assume that $\bm \eta_0$ is not an extreme ray of $\mc T_{\bm \zeta_0}$, then there exists $\bm \delta_1$ and $\bm \delta_2$, which are not parallel to $\bm \eta_0$, such that $\bm \eta_0 = \bm \delta_1 + \bm \delta_2$. W.L.O.G., assume that the length of $\bm \eta_0$ is small enough so that $\bm \zeta_0 + \bm \delta_1 \in \Xi$ and $\bm \zeta_0 + \bm \delta_2 \in \Xi$. Since $\bm \eta_0$ is an edge direction, there exists vector $\bm \alpha$ and scalar $\beta$ such that
\begin{equation*}
    \bm \alpha^\top \bm \zeta_0 = \beta, \quad \bm \alpha^\top \left(\bm \zeta_0 + \bm \eta_0\right) = \beta, \quad \bm \alpha^\top \left(\bm \zeta_0 + \bm \delta_1\right) < \beta, \quad \text{ and } \quad \bm \alpha^\top \left(\bm \zeta_0 + \bm \delta_2\right) < \beta.
\end{equation*}
It follows that $\alpha^\top\bm \eta_0 = 0$, $\alpha^\top\bm \delta_1 < 0$, and $\alpha^\top\bm \delta_2 < 0$. This is a contradiction with $\bm \eta_0 = \bm \delta_1 + \bm \delta_2$, which implies that any edge direction $\bm \eta_0$ at $\bm \zeta_0$ must be an extreme ray of $\mc T_{\bm \zeta_0}$. Thus, our claim is true.

Next, noting that $\Xi\subset \mc T_{\bm \zeta_0}\subset \mb R^d$ and $\dim(\Xi) = d$, it follows $\dim(\mc T_{\bm \zeta_0}) =d$. Due to our above claim $\mc T_{\bm \zeta_0} = {\rm cone} \{\bm \eta_1, \ldots, \bm \eta_d\}$, it is true that edge directions $\bm \eta_1, \ldots, \bm \eta_d$ are linearly independent. So the matrix $\bm E = [\bm \eta_1, \ldots, \bm \eta_d]\in \mb R^{d\times d}$ is invertible. Now consider any globally feasible recourse system~\eqref{eqn:linear_sys}, by definition, there exists $\bm y_0, \bm y_1, \ldots, \bm y_d\in \mb R^{r}$ such that 
\begin{equation*}
    \bm W \bm y_0 + \bm H\bm \zeta_0 \geq \bm b, \quad \text{ and }\quad \bm W \bm y_{\ell} + \bm H\left(\bm \zeta_0 + \bm \eta_{\ell}\right) \geq \bm b \quad \forall \ell\in [d].
\end{equation*}
Following the same construction in Theorem 1 of \cite{bertsimas2012power}, for each $\bm \zeta$ in the simplex ${\rm conv} \{\bm \zeta_0, \bm \zeta_0+\bm \eta_1, \ldots, \bm \zeta_0 +\bm \eta_d\}$, the vector $\bm \zeta-\bm \zeta_0$ admits a unique convex combination of $\{\bm \eta_1, \ldots, \bm \eta_d\}$. Moreover, the coefficients $\{\alpha_{\ell}\}_{\ell \in [d]}$ is an affine function of $\bm \zeta$. Then $\bm y(\bm \zeta) = \bm y_0 + \sum_{\ell\in [d]}\alpha_{\ell}(\bm y_{\ell}- \bm y_0)$ is an affine function that is feasible to~\eqref{eqn:linear_sys} for any $\bm \zeta \in {\rm conv} \{\bm \zeta_0, \bm \zeta_0+\bm \eta_1, \ldots, \bm \zeta_0 +\bm \eta_d\}$. So the above $\bm y(\cdot)$ provides a locally linear feasible policy at $\bm \zeta_0$. 

Then, we consider the case of $\dim(\Xi) < d$. Denote $q= \dim(\Xi)$, then by Corollary 1.6.1 in \cite{rockafellar1970convex}, we can find a one-to-one affine mapping from ${\rm aff}(\Xi)$ onto $\mb R^q$. Specifically, we consider a set of orthonormal bases of the subspace ${\rm aff}(\Xi)-\bm \zeta_0$, denoted as $\{\bm \xi_1, \ldots, \bm \xi_q\}$, then
\begin{equation*}
    {\rm aff}(\Xi) = \bm \zeta_0 + {\rm span}\{\bm \xi_1, \ldots, \bm \xi_q\}.
\end{equation*}
Each point $\bm \zeta \in {\rm aff}(\Xi)$, there is a unique $\bm \theta\in \mb R^q$ such that
\begin{equation*}
    \bm \zeta = \bm \zeta_0 + [\bm \xi_1, \ldots, \bm \xi_q]\bm \theta, \quad \text{ and } \quad \bm \theta = \begin{bmatrix}
        \bm \xi_1^\top\\
        \vdots\\
        \bm \xi_q^\top
    \end{bmatrix} \left(\bm \zeta-\bm \zeta_0\right).
\end{equation*}
Intuitively, the above affine mapping is nothing but a composition of a translation and a rotation. Thus, the image of $\Xi$, denoted as $\Theta$, is also a polyhedron. Moreover, the invertible affine mapping establishes a bijective correspondence between the vertices, edges, and faces of $\Xi$ and those of $\Theta$. Therefore, considering the global feasibility (or locally linear feasibility at $\bm \zeta_0$) of system~\eqref{eqn:linear_sys} on $\Xi$ is equivalent to considering the global feasibility (or locally linear feasibility at $\bm 0\in \mb R^q$) of the system
\begin{equation}\label{eqn:proj_sys}
    \bm W\bm y + \bm H \left(\bm \zeta_0 + [\bm \xi_1, \ldots, \bm \xi_q]\bm \theta\right) \geq \bm b.
\end{equation}
Now, system~\eqref{eqn:proj_sys} is globally feasible on $\Theta\subset \mb R^q$. Together with $\dim(\Theta) = \dim(\Xi) = q$, it follows from our previous analysis that system~\eqref{eqn:proj_sys} is locally linear feasible at $\bm 0\in \mb R^q$ on $\Theta$. Therefore, system~\eqref{eqn:linear_sys} is also locally linear feasible at $\bm \zeta_0$ on $\Xi$, which finishes the proof.     
\end{proof}

\Cref{thm:no_more_d_edges} illustrates an important sufficient condition under which global feasibility implies local linear feasibility. It suffices to check the number of edges incident to each vertex of $\Xi$. A direct consequence of \Cref{thm:no_more_d_edges} is that, if $\Xi$ is a simple polyhedron, then the technical assumption A4b in \Cref{thm:asympt_opt} is satisfied when the milder assumption A4a is imposed. 
\begin{corollary}\label{coro:simple_poly}
    Let $\Xi$ be a simple polyhedron, and A1-A3 and A4a in \Cref{thm:asympt_opt} hold, then
\[
\lim_{N\to\infty}\hat{v}_{N}^{\text{SRO}} \;=\; \lim_{N\to\infty}\hat{v}_{N}^{\text{MP}} \;=\; v^{\star}\qquad a.s.,
\]
and any accumulation point of $\{\bm{x}_{N}^{\text{SRO}}\}_{N\in\mathbb{N}}$ or $\{\bm{x}_{N}^{\text{MP}}\}_{N\in\mathbb{N}}$ is almost surely an optimal first-stage decision for~\eqref{eqn:two_stage}, where $\bm{x}_{N}^{\text{SRO}}$ and $\bm{x}_{N}^{\text{MP}}$ are optimal first-stage decisions for~\eqref{eqn:sample_robust} and~\eqref{eqn:multi_policy}, respectively.
\end{corollary}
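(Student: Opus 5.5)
The corollary follows by verifying the hypotheses of \Cref{thm:asympt_opt}. Conditions A1--A3 and A4a are assumed, so only A4b has to be derived, and the rest of that theorem's conclusion then transfers verbatim. Fix the first-stage decision $\bm x$ provided by A4a and set $\bm b = \bm h_0 - \bm T\bm x$. As observed in \Cref{sec:setup}, A4a says exactly that the recourse system~\eqref{eqn:linear_sys} is globally feasible on $\Xi$ in the sense of Definition~\ref{def:gf}. Since $\bm h(\bm\zeta) = -\bm H\bm\zeta + \bm h_0$, the inequality $\bm T\bm x + \bm W\bm y \ge \bm h(\bm\zeta)$ is identical to $\bm W\bm y + \bm H\bm\zeta \ge \bm b$.

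Next I use simpleness. A simple polyhedron has every vertex incident to exactly $\dim(\Xi)$ edges. Here ``ambient dimension'' should be read as the affine dimension of $\Xi$, since the paper's notion of simpleness is taken relative to $\mathrm{aff}(\Xi)$, which is also how \Cref{thm:no_more_d_edges} is phrased. I will state this convention explicitly. If $\Xi$ is full-dimensional, the two readings agree anyway.

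Hence at every vertex $\bm\zeta_0$ of $\Xi$ we have $p = \dim(\Xi)$, and \Cref{thm:no_more_d_edges} gives local linear feasibility of~\eqref{eqn:linear_sys} at $\bm\zeta_0$. That is, there exist $\epsilon_{\bm\zeta_0} > 0$ and an affine $\bm y_{\bm\zeta_0}(\cdot) \in \mathcal L$ such that the system holds on $\Xi \cap B(\bm\zeta_0, \epsilon_{\bm\zeta_0})$. The existence of at least one vertex is part of the standing hypothesis of \Cref{thm:asympt_opt}.

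The only point needing care is that A4b asks for a single $\epsilon$ that works for all vertices. A polyhedron has finitely many vertices, so I take $\epsilon = \min_{\bm\zeta_0} \epsilon_{\bm\zeta_0} > 0$. Shrinking the ball preserves feasibility, because $\Xi \cap B(\bm\zeta_0, \epsilon) \subseteq \Xi \cap B(\bm\zeta_0, \epsilon_{\bm\zeta_0})$. This establishes A4b for the same $\bm x$ as in A4a.

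All hypotheses of \Cref{thm:asympt_opt} therefore hold, and both asymptotic conclusions follow. I expect no real obstacle: the substantive work is already contained in \Cref{thm:no_more_d_edges}, and the remaining steps are the dimension convention and the uniform choice of $\epsilon$ over finitely many vertices.
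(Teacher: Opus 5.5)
Your proposal is correct and follows the paper's proof: A4a gives global feasibility of the recourse system~\eqref{eqn:linear_sys}, \Cref{thm:no_more_d_edges} then gives local linear feasibility at every vertex (hence A4b), and \Cref{thm:asympt_opt} applies. Two details the paper leaves implicit are worth keeping. First, you take the minimum of the finitely many vertex-wise radii, since A4b asks for a single $\epsilon$ valid at all vertices. Second, you read simpleness relative to $\dim(\Xi)$ rather than the ambient dimension; that reading is what matches the hypothesis $p=\dim(\Xi)$ of \Cref{thm:no_more_d_edges}, and it is needed when $\Xi$ is not full-dimensional.
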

\begin{proof}[Proof of \Cref{coro:simple_poly}]
    By A4a, the recourse system \eqref{eqn:linear_sys} is globally feasible. By \Cref{thm:no_more_d_edges}, \eqref{eqn:linear_sys} is locally linear feasible at every vertex of $\Xi$. Therefore, condition A4b is satisfied. Hence, the conclusion follows directly from \Cref{thm:asympt_opt}.   
\end{proof}

A simple polyhedron is a standard object in polyhedral theory \citep{schrijver1998theory, ziegler2012lectures}. It coincides with non-degeneracy at every vertex in the sense of linear programming, and is therefore a familiar regularity property in optimization. To the best of our knowledge, Corollary~\ref{coro:simple_poly} is the first result that establishes the role of simple polyhedra in connecting local and global feasibility of recourse systems, as well as in the asymptotic optimality of two-stage problems. As shown in \Cref{fig:simple_poly_examples}, this result strictly generalizes both our previous \Cref{prop:gf_is_llf} and Theorem~1 of \cite{bertsimas2012power}, which focus on the simplicial cone support and simplex support, respectively. 

\begin{figure}[t]
    \centering
\includegraphics[width=1.0\textwidth]{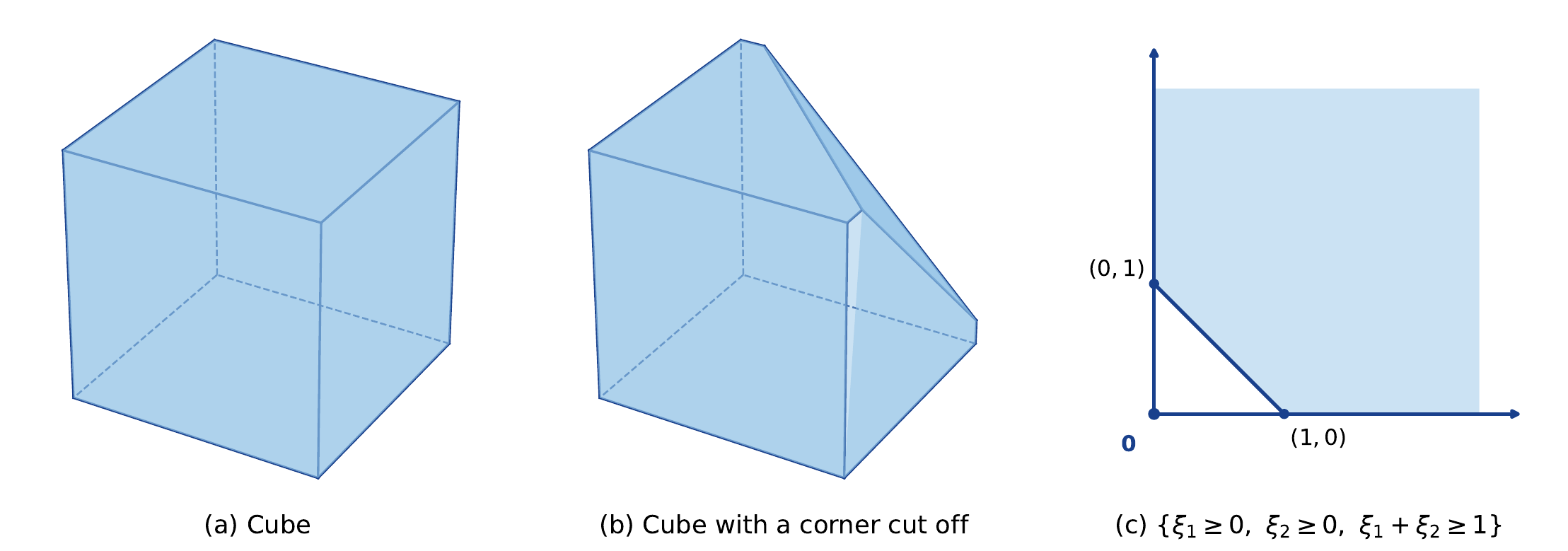}
    \caption{Some instances of simple polyhedra beyond the simplex and simplicial cone.}
    \label{fig:simple_poly_examples}
\end{figure}

Moreover, the geometric feature in \Cref{thm:no_more_d_edges} is not only of theoretical interest. It is also applicable to many real-world two-stage problems. For instance, the budgeted uncertainty set with some specific budget \citep{bertsimas2004price,ben2004adjustable} is a simple polyhedron (see our case study in \Cref{sec:casestudy}). In addition, the box uncertainty set used in two-stage adjustable robust linear optimization is also a simple polyhedron. Therefore, by \Cref{coro:simple_poly}, we obtain the asymptotic optimality of~\eqref{eqn:sample_robust} and \eqref{eqn:multi_policy} under the mild assumptions A1--A3 and A4a in \Cref{thm:asympt_opt}.

Thanks to \Cref{thm:no_more_d_edges}, we only need to search for the polyhedron with at least one vertex incident to more than $d$ edges, if we aim at finding a counterexample that is globally feasible but not locally linear feasible. We provide the results on finding the counterexamples in the next section. Surprisingly, as shown in the next section, a simple polyhedron is not only sufficient in deriving locally linear feasibility from global feasibility across all recourse systems, but also necessary.

\section{The Necessary Direction: Non-Simple Polyhedral Supports}\label{sec:counterexamples}
The previous section established one direction of our dichotomy: when the support is a simple polyhedron, global feasibility implies locally linear feasibility. This section establishes the converse. We show that simpleness is not merely sufficient but also necessary. Specifically, once the support admits a non-simple vertex, global feasibility no longer guarantees locally linear feasibility. Moreover, this failure always happens for some recourse system and problem instance. Our argument proceeds in two steps. In \Cref{sec:concrete_counterexample}, we exhibit an explicit three-dimensional counterexample, which makes the failure concrete and, as a byproduct, gives a direct negative answer to the open question posed by \citet{bertsimas2022two}. Then, in \Cref{sec:general_counterexample}, we show that this phenomenon is not special to the example: every polyhedron with at least one non-simple vertex admits a recourse system that is globally feasible but fails to be locally linear feasible at that vertex.

\subsection{A Concrete Counterexample}\label{sec:concrete_counterexample}
We begin with a concrete polyhedron support $\Xi$ in $\mathbb{R}^3$. The origin is the unique non-simple vertex of the polytope, where seven edges are incident. We construct a recourse system that is pointwise feasible on the entire support, but for which no affine recourse policy satisfies the constraints in any neighborhood of the origin. Hence, the failure of locally linear feasibility is intrinsic to the geometry at this single non-simple vertex rather than a global pathology.
\begin{figure}[t]
    \centering
\includegraphics[width=0.6\textwidth]{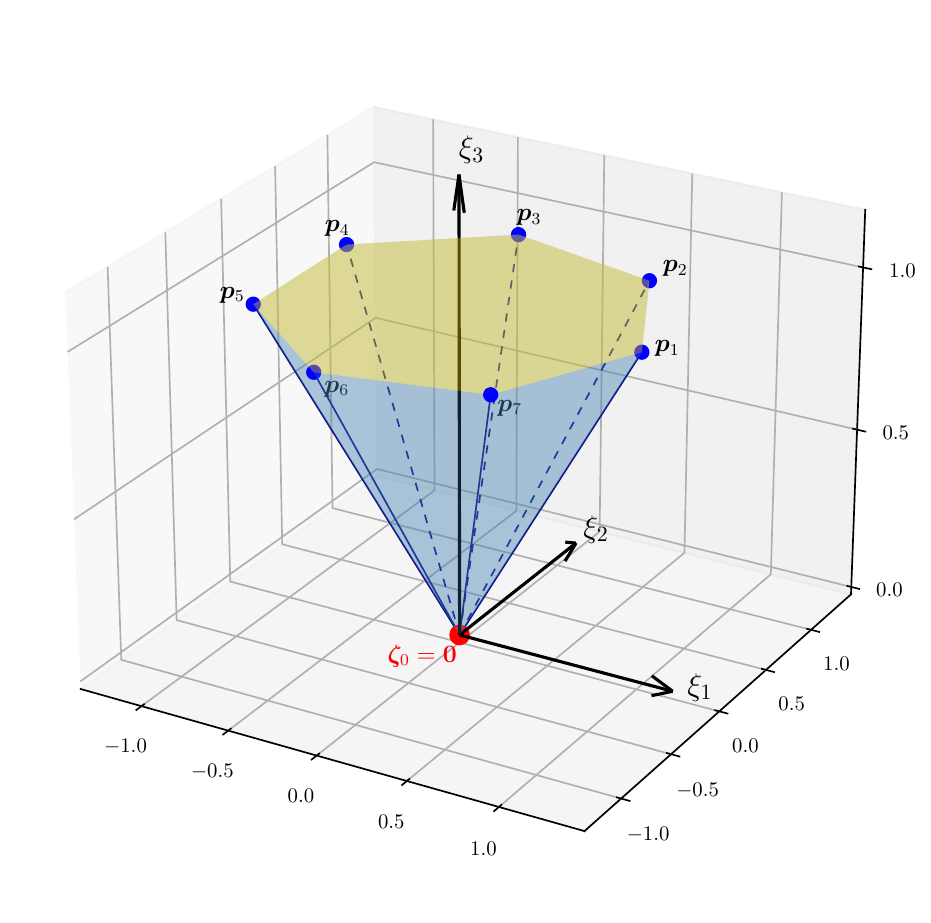}
    \caption{The polytope $\Xi = {\rm conv}\{\bm 0, \bm p_1, \ldots, \bm p_7\}$ used in \Cref{thm:counterexample}.}
\end{figure}
\begin{theorem}\label{thm:counterexample}
    Let $m=4$, $r=1$ and $d=3$. Let the matrices and vectors
    \begin{equation*}
        \bm W=\begin{bmatrix}
            1\\1\\-1\\-1
        \end{bmatrix}, \quad \bm H = \begin{bmatrix}
            1 & 2 & 1\\
            1&-2&3\\
            2&-2&2\\
            -1&2&3\\
        \end{bmatrix}, \quad \text{and} \quad \bm b=\begin{bmatrix}
            0\\0\\0\\0
        \end{bmatrix}.
    \end{equation*}
    Let the polyhedron $\Xi$ be
    \begin{equation*}
        \Xi = {\rm conv}\{\bm 0, \bm p_1, \ldots, \bm p_7\}, \text{ where } \bm p_j = [\cos \theta_j, \sin \theta_j, 1]^\top, \; \theta_j = \frac{2\pi (j-1)}{7}, \; \forall j\in [7]. 
    \end{equation*}
Then~\eqref{eqn:linear_sys} is globally feasible on $\Xi$, but not locally linear feasible at the vertex $\bm \zeta_0 = \bm 0$.
\end{theorem}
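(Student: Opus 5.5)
The plan is to reduce both claims to elementary linear inequalities, using the structure $r=1$, $\bm b=\bm 0$. Write $\bm h_1,\ldots,\bm h_4$ for the rows of $\bm H$. The system~\eqref{eqn:linear_sys} then reads
\[
y\ge -\bm h_1^\top\bm\xi,\qquad y\ge -\bm h_2^\top\bm\xi,\qquad y\le \bm h_3^\top\bm\xi,\qquad y\le \bm h_4^\top\bm\xi .
\]
So a feasible $y$ exists at $\bm\xi$ if and only if $\max\{-\bm h_1^\top\bm\xi,-\bm h_2^\top\bm\xi\}\le\min\{\bm h_3^\top\bm\xi,\bm h_4^\top\bm\xi\}$. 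Equivalently, $(\bm h_i+\bm h_k)^\top\bm\xi\ge 0$ for $i\in\{1,2\}$ and $k\in\{3,4\}$. This is exactly what \Cref{lem:gf_charac} gives, since $\ker(\bm W^\top)\cap\mb R^4_+$ is generated by $\bm e_i+\bm e_k$.

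\textbf{Global feasibility.} The four vectors are
\[
\bm h_1+\bm h_3=(3,0,3),\quad \bm h_1+\bm h_4=(0,4,4),\quad \bm h_2+\bm h_3=(3,-4,5),\quad \bm h_2+\bm h_4=(0,0,6).
\]
Each condition is linear in $\bm\xi$, so it suffices to check it at the vertices $\bm 0,\bm p_1,\ldots,\bm p_7$. At $\bm 0$ all four values vanish. At $\bm p_j$ the values are $3\cos\theta_j+3\ge0$, $4\sin\theta_j+4\ge0$, $3\cos\theta_j-4\sin\theta_j+5\ge0$ (because $\sqrt{3^2+4^2}=5$), and $6>0$. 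This proves global feasibility. I will also record which of these are nearly tight at the $\theta_j$, since those constraints will drive the local obstruction.

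\textbf{Failure of local linear feasibility.} Suppose $y(\bm\xi)=y_0+\bm a^\top\bm\xi$ works on $\Xi\cap B(\bm 0,\epsilon)$.
\begin{enumerate}
\item Evaluating at $\bm\xi=\bm 0$ gives $y_0\ge0$ and $y_0\le0$, so $y_0=0$.
\item Near $\bm 0$, $\Xi$ coincides with the cone $\mathrm{cone}\{\bm p_1,\ldots,\bm p_7\}$, and $t\bm p_j\in\Xi\cap B(\bm 0,\epsilon)$ for small $t>0$. By positive homogeneity we need, for every $j\in[7]$,
\[
-\bm h_1^\top\bm p_j\le\bm a^\top\bm p_j,\qquad -\bm h_2^\top\bm p_j\le\bm a^\top\bm p_j,\qquad \bm a^\top\bm p_j\le\bm h_3^\top\bm p_j,\qquad \bm a^\top\bm p_j\le\bm h_4^\top\bm p_j .
\]
\item This is a system of 28 linear inequalities in $\bm a\in\mb R^3$. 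I will show it is infeasible via a Farkas certificate. I want nonnegative multipliers on a few of these inequalities, pairing lower bounds at some rays $\bm p_j$ with upper bounds at other rays, such that the $\bm a$-terms cancel and the constant side is strictly negative. The cancellation condition is a linear dependence $\sum_j(\lambda_j-\nu_j)\bm p_j=\bm 0$ among the rays, which exists because seven vectors in $\mb R^3$ are dependent. The non-simplicity of the vertex is precisely what makes room for this. Each ray should carry the lower bound from $\bm h_1$ or $\bm h_2$ and the upper bound from $\bm h_3$ or $\bm h_4$, chosen to be the binding ones at that angle.
\end{enumerate}

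The main obstacle is finding this explicit certificate and verifying its sign exactly, given the irrational coordinates $\cos(2\pi k/7)$ and $\sin(2\pi k/7)$. My first approach is to solve the small LP numerically to identify the support of the certificate. I expect it to involve roughly four rays, namely the angles where the envelope $\max\{-\bm h_1,-\bm h_2\}$ switches branches and where $\bm h_2+\bm h_3$ is nearly tight (around $\theta\approx 2.21$, close to $\theta_3=4\pi/7$ and $\theta_4=6\pi/7$). I then plan to certify the strict inequality using elementary trigonometric bounds on these seven angles. An alternative is to exhibit the obstruction geometrically: the required lower function $\max\{-\bm h_1^\top\bm\xi,-\bm h_2^\top\bm\xi\}$ is not majorized on the rays by any linear function lying below $\min\{\bm h_3^\top\bm\xi,\bm h_4^\top\bm\xi\}$, and I would show this by interpolating the forced values of $\bm a^\top\bm p_j$ on three rays and deriving a contradiction on a fourth.
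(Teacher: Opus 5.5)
Your route is the paper's. The global-feasibility half is complete: the four sums $\bm h_i+\bm h_k$ are exactly the vectors $\bm H^\top\bm\lambda$ for the generators of $\ker(\bm W^\top)\cap\mb R^4_+$, and checking them on $\bm 0,\bm p_1,\ldots,\bm p_7$ is the paper's verification. Your reduction for the local part also matches ($y_0=0$, homogeneity, 28 inequalities in $\bm a$, Farkas).

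The gap is that you stop before the only nontrivial step: you never exhibit a certificate. Linear dependence of the $\bm p_j$ cancels the $\bm a$-terms for every $\bm H$, so it cannot by itself exclude an affine selection. For instance, with $\bm h_3=\bm h_4=-\bm h_1=-\bm h_2$ the choice $y=\bm h_3^\top\bm\xi$ works on the same $\Xi$. Everything rests on the sign of the constant side, which you have not established. Your guess about where the obstruction lives is also off. The constraints at $\bm p_1,\ldots,\bm p_4$ alone are satisfied by $\bm a=(0,0,-0.7)$, so rays near the branch switches and near $\theta\approx 2.21$ do not suffice.

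The paper's certificate uses $\bm p_3,\bm p_4,\bm p_5,\bm p_7$. Write $f_j=\bm a^\top\bm p_j$ and take the four bounds
$f_3\ge-\bm h_2^\top\bm p_3\approx-0.8276$, $f_4\le\bm h_3^\top\bm p_4\approx-0.6697$, $f_5\ge-\bm h_1^\top\bm p_5\approx0.7687$, and $f_7\le\bm h_4^\top\bm p_7\approx0.8128$.
Solve $\mu_{1,5}\bm p_5+\mu_{2,3}\bm p_3-\mu_{3,4}\bm p_4-\mu_{4,7}\bm p_7=\bm 0$, normalized to sum one. The unique solution is $(\mu_{1,5},\mu_{2,3},\mu_{3,4},\mu_{4,7})\approx(0.2775,0.2225,0.4010,0.0990)\ge\bm 0$. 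Hence $\mu_{1,5}f_5+\mu_{2,3}f_3-\mu_{3,4}f_4-\mu_{4,7}f_7=0$ for every $\bm a$, while the four bounds force this combination to be at least about $0.2172>0$.

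Intuitively, the affine function must climb from at most $-0.67$ at $\bm p_4$ to at least $0.77$ at the adjacent $\bm p_5$, and this is incompatible with the bounds at $\bm p_3$ and $\bm p_7$. The paper checks these numbers only to four decimals, and the margin is ample, so your planned exact trigonometric bounds are optional. With this certificate inserted, your argument is complete.
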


\begin{proof}[Proof of \Cref{thm:counterexample}]
We first verify that~\eqref{eqn:linear_sys} is globally feasible on the tangent cone $\mc T_{\bm 0} = {\rm cone}\{\bm p_1, \ldots, \bm p_7\}$, hence is also globally feasible on $\Xi$. To see it, by \Cref{lem:gf_charac}, it suffices to show 
\begin{equation}\label{eqn:equiv_global_feasible}
    \forall \bm \lambda \in \mathcal N \triangleq {\rm ker}(\bm W^\top) \cap \mathbb R^m_{+}, \text{ it holds that }     \bm H^\top \bm \lambda \in \mc T_{\bm 0}^\star,
\end{equation}
where $\mc T_{\bm 0}^\star$ is the dual cone of $\mc T_{\bm 0}$. At the same time, by construction of $\bm W$, we have
\begin{equation*}
    \mathcal N = \left\{\bm \lambda \in \mathbb R^4 ~:~ \lambda_1 + \lambda_2 = \lambda_3 +\lambda_4, \; \lambda_i\geq 0, \; \forall i\in [4] \right\}.
\end{equation*}
The extreme rays of $\mathcal N$ is given by
\begin{equation*}
    \bm \lambda^{1} = \begin{bmatrix} 1 \\ 0 \\ 1 \\ 0 \end{bmatrix}, \quad
\bm \lambda^{2} = \begin{bmatrix} 1 \\ 0 \\ 0 \\ 1 \end{bmatrix}, \quad
\bm \lambda^{3} = \begin{bmatrix} 0 \\ 1 \\ 1 \\ 0 \end{bmatrix}, \quad \text{and} \quad
\bm \lambda^{4} = \begin{bmatrix} 0 \\ 1 \\ 0 \\ 1 \end{bmatrix},
\end{equation*}
and it could be verified that
\begin{equation*}
    \mathcal N = {\rm cone}\left\{\bm \lambda^{1}, \bm \lambda^{3}, \bm \lambda^{2}, \bm \lambda^{4}\right\}.
\end{equation*}
By construction of $\bm H$, we have
\begin{equation*}
    \bm H^\top \bm \lambda^{1} = \begin{bmatrix}
        3\\0\\3
    \end{bmatrix}, \bm H^\top \bm \lambda^{2} = \begin{bmatrix}
        0\\4\\4
    \end{bmatrix}, \bm H^\top \bm \lambda^{3} = \begin{bmatrix}
        3\\-4\\5
    \end{bmatrix}, \text{ and }  \bm H^\top \bm \lambda^{4} = \begin{bmatrix}
        0\\0\\6
    \end{bmatrix}.
\end{equation*}
We note that, for each above vector, the square of last component is not less than the sum of square of the other two components. Therefore, it can be verified that 
\begin{equation*}
    \bm p_j^\top \bm H^\top \bm \lambda^{i} \geq 0 \quad \forall i\in [4], \; j\in [7].
\end{equation*}
So, we get that $\bm H^\top \bm \lambda^{i} \in \mc T_{\bm 0}^\star$ for all $i\in [4]$. Then~\eqref{eqn:equiv_global_feasible} follows from that $\mathcal N$ is the convex cone generated by $\{\bm \lambda^i\}_{i\in [4]}$. Thus, we get that~\eqref{eqn:linear_sys} is indeed globally feasible on $\mc T_{\bm 0}$ and $\Xi$.

Then we prove~\eqref{eqn:linear_sys} is not locally linear feasible at $\bm \zeta_0=\bm 0$. We argue it by contradiction. Assume that there exists $\epsilon>0$ and $a_0, a_1, a_2, a_3 \in \mathbb R$ such that
\begin{equation*}
    \bm W \left(a_0 + a_1\xi_1 + a_2\xi_2 + a_3\xi_3\right) +\bm H\bm \xi \geq \bm b \quad \forall \bm \xi\in \Xi\cap B(\bm \zeta_0, \epsilon).
\end{equation*}
By substituting $\bm \xi = \bm \zeta_0$ into the above recourse system, we immediately get that $a_0=0$. Let $\delta \triangleq \max_{j\in [7]}\{\epsilon/(2\|\bm p_j\|)\} >0$, then it follows that 
\begin{equation*}
    \bm W \left( \bm a^\top \delta \bm p_j\right) + \delta \bm H \bm p_j \geq \bm b \quad \forall j\in [7]. 
\end{equation*}
Substituting $\bm b =\bm 0$ implies that 
\begin{equation*}
    -\left(\bm W \bm p_j^\top \right)\bm a \leq \bm H\bm p_j\quad \forall j\in [7]. 
\end{equation*}
Define the matrix and vector 
\begin{equation*}
    \hat{\bm A} \triangleq \begin{bmatrix}
        -\bm W\bm p_1^\top \\
        -\bm W\bm p_2^\top \\
        \vdots\\
        -\bm W\bm p_7^\top \\
    \end{bmatrix}\in \mathbb{R}^{28\times 3} \quad \text{ and } \quad \hat{\bm b}\triangleq \begin{bmatrix}
        \bm H\bm p_1\\
        \bm H\bm p_2\\
        \vdots\\
        \bm H\bm p_7\\
    \end{bmatrix}\in \mathbb R^{28}.
\end{equation*}
Then according to our assumption, we actually found a vector $\bm a\in \mathbb R^3$ such that $\hat{\bm A}\bm a \leq \hat{\bm b}$. By Farkas' lemma, this is equivalent to that, for all $\bm \mu\in \mathbb R^{28}_+$, if it holds that $\hat{\bm A}^\top \bm \mu = \bm 0$, then $\hat{\bm b}^\top \bm \mu \geq 0$. Therefore, to derive a contradiction, it suffices to show that
\begin{equation}\label{eqn:equiv_not_LLF}
    \exists \; \bm \mu\in \mathbb R^{28} \text{ with } \bm \mu \geq \bm 0 \text{ and } \hat{\bm A}^\top \bm \mu = \bm 0, \; \text{ but } \hat{\bm b}^\top \bm \mu < 0.
\end{equation}
To construct such $\bm \mu$, we partition it into block subvectors as follows:
\begin{equation*}
    \bm \mu = \begin{bmatrix}
        \bm \mu_{\star, 1}\\
        \bm \mu_{\star, 2}\\
        \vdots\\
        \bm \mu_{\star, 7}\\
    \end{bmatrix}, \; \text{ where } \bm \mu_{\star, j} = \begin{bmatrix}
        \mu_{1, j}\\
        \mu_{2, j}\\
        \mu_{3, j}\\
        \mu_{4, j}
    \end{bmatrix} \in \mb R^4.
\end{equation*}
Then, we let $\mu_{1, 5}, \mu_{2, 3}, \mu_{3,4}$ and $\mu_{4, 7}$ be the solution to the linear equations
\begin{equation}\label{eqn:define_mu}
    \left\{\begin{array}{ll}
        \mu_{1, 5} \cdot \bm p_5 +  \mu_{2, 3} \cdot \bm p_3 - \mu_{3, 4} \cdot \bm p_4 - \mu_{4, 7} \cdot \bm p_7 = \bm 0\\
        \mu_{1, 5} + \mu_{2, 3} + \mu_{3,4} + \mu_{4, 7} = 1
    \end{array} \right.,
\end{equation}
and $\mu_{i, j} = 0$ for all the other pairs. Before we verify that such $\bm \mu$ is valid for~\eqref{eqn:equiv_not_LLF}, we note that the determinant of coefficient matrix of~\eqref{eqn:define_mu} is
\begin{equation*}
 \begin{vmatrix}
    \cos \displaystyle\frac{8\pi}{7}& \cos \displaystyle\frac{4\pi}{7} & -\cos \displaystyle\frac{6\pi}{7} & -\cos \displaystyle\frac{12\pi}{7}\\[1em]
    \sin \displaystyle\frac{8\pi}{7}& \sin \displaystyle\frac{4\pi}{7} & -\sin \displaystyle\frac{6\pi}{7} & -\sin \displaystyle\frac{12\pi}{7}\\[1em]
    1& 1 & -1 & -1\\[1em]
    1 & 1 & 1 & 1
\end{vmatrix} \approx -5.9449 <0.
\end{equation*}
Therefore, there exists a unique solution to~\eqref{eqn:define_mu}, and the claimed $\bm \mu$ is well-defined. In the follows, we verify the above $\bm \mu$ satisfies~\eqref{eqn:equiv_not_LLF}. First of all, by solving the linear equations~\eqref{eqn:define_mu}, we obtain the approximate values
\begin{equation*}
    \mu_{1, 5} \approx 0.2775, \; \mu_{2, 3} \approx 0.2225, \;  \mu_{3,4} \approx 0.4010, \; \text{ and } \mu_{4, 7} \approx 0.0990.
\end{equation*}
Thus, we get that $\bm \mu \geq \bm 0$. Moreover,
\begin{equation*}
    \begin{array}{rl}
        \hat{\bm A}^\top \bm \mu = \left[-\bm p_1 \bm W^\top, -\bm p_2 \bm W^\top, \ldots, -\bm p_7 \bm W^\top  \right] \begin{bmatrix}
        \bm \mu_{\star, 1}\\
        \bm \mu_{\star, 2}\\
        \vdots\\
        \bm \mu_{\star, 7}\\
    \end{bmatrix} = \displaystyle\sum_{j\in[7]} -\bm p_j \bm W^\top \bm \mu_{\star, j}    = & \displaystyle\sum_{j\in[7]} -\bm p_j \sum_{i\in [4]} w_i \mu_{i, j}\\
    = & -\displaystyle\sum_{i\in [4], j\in[7]} w_i \mu_{i, j} \cdot \bm p_j,
    \end{array}
\end{equation*}
where $w_i$ is the $i$-th component of $\bm W$ for each $i\in [4]$. Noting that $\mu_{i, j} = 0$ for most of the pairs, it follows that
\begin{equation*}
    \hat{\bm A}^\top \bm \mu = -\left( \mu_{1, 5} \cdot \bm p_5 +  \mu_{2, 3} \cdot \bm p_3 - \mu_{3, 4} \cdot \bm p_4 - \mu_{4, 7} \cdot \bm p_7 \right) = \bm 0,
\end{equation*}
where the last step is from the linear equations~\eqref{eqn:define_mu}. Furthermore, we compute
\begin{equation*}
    \begin{array}{rl}
        \hat{\bm b}^\top \bm \mu =  \left[\bm p_1^\top \bm H^\top, \bm p_2^\top \bm H^\top, \ldots, \bm p_7^\top \bm H^\top  \right] \begin{bmatrix}
        \bm \mu_{\star, 1}\\
        \bm \mu_{\star, 2}\\
        \vdots\\
        \bm \mu_{\star, 7}\\
    \end{bmatrix} = \displaystyle\sum_{j\in[7]} \bm p_j^\top \bm H^\top \bm \mu_{\star, j} = & \displaystyle\sum_{j\in[7]} \bm p_j^\top \sum_{i\in [4]} \mu_{i, j} \bm h_i\\
    = & \displaystyle\sum_{i\in [4], j\in[7]} \mu_{i, j}\cdot \bm p_j^\top \bm h_i,
    \end{array}
\end{equation*}
where $\bm h_i$ is the $i$-th column of $\bm H^\top$ for each $i\in [4]$. Similarly, noting that $\mu_{i, j} = 0$ for most of the pairs, it follows that
\begin{equation*}
    \begin{array}{rl}
        \hat{\bm b}^\top \bm \mu = & \mu_{1, 5} \cdot \bm p_5^\top \bm h_1 +  \mu_{2, 3} \cdot \bm p_3^\top \bm h_2 + \mu_{3, 4} \cdot \bm p_4^\top \bm h_3 + \mu_{4, 7} \cdot \bm p_7^\top \bm h_4\\[1em]
        = & \left(\cos \displaystyle\frac{8\pi}{7} + 2\sin \displaystyle\frac{8\pi}{7} +1 \right)\mu_{1, 5} + \left(\cos \displaystyle\frac{4\pi}{7} - 2\sin \displaystyle\frac{4\pi}{7} + 3\right)\mu_{2, 3}\\[1em]
        & \quad + \left(2\cos \displaystyle\frac{6\pi}{7} -2\sin \displaystyle\frac{6\pi}{7} +2\right)\mu_{3, 4} + \left(-\cos \displaystyle\frac{12\pi}{7} + 2\sin \displaystyle\frac{12\pi}{7} +3\right) \mu_{4, 7}\\[1em]
        \approx & -0.2172 < 0.
    \end{array}
\end{equation*}
Therefore, the designed $\bm \mu$ indeed satisfies~\eqref{eqn:equiv_not_LLF}. Thus, the contradiction occurs and the recourse system~\eqref{eqn:linear_sys} is not locally linear feasible. Hence, the example claimed in this theorem serves as a counterexample that is globally feasible but not locally linear feasible. 
\end{proof}

Theorem~\ref{thm:counterexample} is stated at the level of the recourse system~\eqref{eqn:linear_sys}. The example in \Cref{thm:counterexample} provides a concrete instance of a recourse system that is globally feasible but not locally linear feasible. In the following corollary, we embed this recourse system into a specified two-stage stochastic linear program. The resulting problem instance explicitly violates exactly the technical condition A4b in Theorem~\ref{thm:asympt_opt}, while preserving the remaining standard assumptions used in that result.

\begin{corollary}\label{coro:counterexample}
    Let $\Xi$ be as in \Cref{thm:counterexample}, and let $\mb P$ be any
    distribution supported on $\Xi$ with $\mb E[\|\tilde{\bm \xi}\|] < \infty$, \textit{e.g.}, the uniform distribution on $\Xi$. Consider the two-stage stochastic linear program:
    \begin{equation}\label{eqn:counterexample}
        \min_{\bm x\in \mb R^n} \bm c^\top \bm x + \mb E\left[\min_{y\in \mb R} \left\{ 0 ~:~ \bm T\bm x + \bar{\bm W}y \geq \bm h(\tilde{\bm \xi})\right\} \right],
    \end{equation}
    where 
    \begin{equation*}
        \bm T = \begin{bmatrix}
            \bm I\\
            -\bm I\\
            \bm O
        \end{bmatrix}, \quad \bar{\bm W} = \begin{bmatrix}
            \bm O\\
            -\bm O\\
            \bm W
        \end{bmatrix}, \quad \bm h(\bm \xi) = \begin{bmatrix}
            \bm e\\
            -\bm e\\
            \bm b - \bm H\bm \xi
        \end{bmatrix} \quad \forall \xi\in \Xi,
    \end{equation*}
and the matrices $\bm W$, $\bm H$ and the vector $\bm b$ are defined as in
    \Cref{thm:counterexample}. Then problem \eqref{eqn:counterexample}
    satisfies assumptions A2, A3, and A4a of \Cref{thm:asympt_opt} but
    violates A4b. Consequently, it provides a counterexample that resolves the
    open question raised in \cite{bertsimas2022two}.
\end{corollary}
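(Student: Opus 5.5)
The plan is to verify each assumption directly from the structure of \eqref{eqn:counterexample}, reducing everything to \Cref{thm:counterexample}. The first two block rows of $\bm T\bm x+\bar{\bm W}y\ge\bm h(\bm\xi)$ read $\bm x\ge\bm e$ and $-\bm x\ge-\bm e$. Any first-stage decision with a finite recourse value must therefore equal $\bm x=\bm e$. The third block reads $\bm W y\ge\bm b-\bm H\bm\xi$, which is exactly the recourse system~\eqref{eqn:linear_sys} of \Cref{thm:counterexample}, with $\bm T\bm x$ contributing $\bm O\bm x=\bm 0$. In the paper's reduction $\bm b=\bm h_0-\bm T\bm x$, this means that for $\bm x=\bm e$ the induced recourse system in the third block is independent of $\bm x$ and coincides with that of \Cref{thm:counterexample}. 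The first two blocks hold with equality for every $\bm\xi$ and impose no constraint on $y$.

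\textbf{A4a.} Take $\bm x=\bm e$. By the global feasibility part of \Cref{thm:counterexample}, for every $\bm\zeta\in\Xi$ there is $y$ with $\bm W y+\bm H\bm\zeta\ge\bm b$. Since the second-stage cost is $0$, we get $Q(\bm e,\bm\zeta)=0<\infty$ on $\Xi$.

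\textbf{A2 and A3.} For $\bm x\ne\bm e$ the second stage is infeasible for every $\bm\xi$, so $Q=+\infty$ and $V^*(\bm x)=+\infty$. For $\bm x=\bm e$ we have $V^*(\bm e)=\bm c^\top\bm e$, which is finite for any fixed choice of $\bm c$ (e.g.\ $\bm c=\bm 0$). Hence $v^\star$ is finite and the optimal set is $\{\bm e\}$, which is nonempty and bounded. Because $\Xi$ is a polytope, $\mathbb E\|\tilde{\bm\xi}\|\le\max_{\bm\xi\in\Xi}\|\bm\xi\|<\infty$ for any distribution on $\Xi$; for the uniform distribution one also notes that $\Xi$ is full-dimensional.

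\textbf{Violation of A4b.} A4b requires the same $\bm x$ as in A4a. Since $\bm x=\bm e$ is the only decision with finite recourse, A4b would demand an affine $y(\cdot)$ satisfying $\bm W y(\bm\zeta)\ge\bm b-\bm H\bm\zeta$ on $\Xi\cap B(\bm 0,\epsilon)$, where $\bm 0$ is a vertex of $\Xi$. This contradicts the second part of \Cref{thm:counterexample}.

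The only delicate step is the quantifier structure: A4b must fail for \emph{every} admissible $\bm x$ of A4a. Uniqueness of $\bm x=\bm e$ settles this. To be safe about dimensions, I would take $n$ arbitrary and let $\bm I$ be $n\times n$. I would also check that $\bm 0$ is indeed a vertex of $\Xi$: the functional $\bm\xi\mapsto\xi_3$ is $0$ at $\bm 0$ and $1$ at each $\bm p_j$, so it exposes $\bm 0$.
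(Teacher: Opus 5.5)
Your proposal is correct and follows essentially the same route as the paper's proof: the first two constraint blocks force $\bm x=\bm e$, which yields A2 and A3, and the third block reduces A4a and the failure of A4b to the global feasibility and the failure of locally linear feasibility at $\bm 0$ established in \Cref{thm:counterexample}. You also make explicit two points the paper leaves implicit: A4b must hold for the same $\bm x$ as A4a, so uniqueness of $\bm x=\bm e$ is what makes the failure at $\bm 0$ decisive, and $\bm 0$ is indeed a vertex of $\Xi$.
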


\begin{proof}[Proof of \Cref{coro:counterexample}]
    We first note that the recourse system in \eqref{eqn:counterexample} is equivalent to that $\bm x = \bm e$ and the system \eqref{eqn:linear_sys}. By \Cref{thm:counterexample}, the system \eqref{eqn:linear_sys} is globally feasible; that is, when $\bm x = \bm e$, for any $\bm \xi\in \Xi$, there exists some $y$ that satisfies the recourse system in \eqref{eqn:counterexample}. This verifies assumption A4a in \Cref{thm:asympt_opt}. By construction, the only feasible first-stage decision of \eqref{eqn:counterexample} is $\bm x = \bm e$, so the optimal solution of \eqref{eqn:counterexample} is given by $\bm x^\star = \bm e$, and assumption A3 in \Cref{thm:asympt_opt} is satisfied. The optimal value $v^\star = \bm c^\top \bm e$ is finite, so assumption A2 is also satisfied. However, by \Cref{thm:counterexample}, the system \eqref{eqn:linear_sys} is not locally linear feasible at $\bm 0$. Hence, the feasible $y$ for the recourse system in \eqref{eqn:counterexample} can not be selected as an affine function in $\Xi \cap B(\bm 0, \epsilon)$, which violates A4b in \Cref{thm:asympt_opt} and finishes the proof.   
\end{proof}

Corollary~\ref{coro:counterexample} gives a direct negative answer to the open question posed in \citet{bertsimas2022two}. It shows that A4b is not implied by A2, A3, and A4a. Moreover, the failure has an immediate implication for the multi-policy approximation of the two-stage sample robust problem. Specifically, for the problem instance \eqref{eqn:counterexample}, if one sample point $\bm \xi^i$ lies at $\bm 0$, then the neighborhood around this sample requires a locally affine recourse policy according to \eqref{eqn:multi_policy}. Since such a policy does not exist as deduced in \Cref{coro:counterexample}, the formulation~\eqref{eqn:multi_policy} is infeasible and asymptotic optimality breaks down.

\subsection{Counterexamples on Every Non-Simple Support}\label{sec:general_counterexample}

The preceding example shows that locally linear feasibility may fail on a particular non-simple support. We now show that this failure is a structural phenomenon. Whenever a polyhedron has a non-simple vertex, one can construct a recourse system that is globally feasible on the support but not locally linear feasible at that vertex. Without loss of generality, we assume throughout this subsection that $\Xi$ is full-dimensional in $\mb R^d$. The case $\dim(\Xi)<d$ can be reduced to this one by an invertible affine mapping on ${\rm aff}(\Xi)$, as shown in the proof of \Cref{thm:no_more_d_edges}.

\begin{assumption}\label{assum:generic_poly}
    The set $\Xi\subset \mb R^d$ is a full-dimensional polyhedron $\dim \Xi = d$. Moreover, there exists a vertex $\bm \zeta_0$ incident to more than $d$ edges on $\Xi$. Let $\mc K = \mc T_{\bzz}$ be the tangent cone at $\bzz$.
\end{assumption}

Under Assumption~\ref{assum:generic_poly}, we aim at finding a recourse system~\eqref{eqn:linear_sys}; that is, $\bm W, \bm H$ and $\bm b$ with proper dimensions, such that~\eqref{eqn:linear_sys} is globally feasible on the polyhedron $\bm \zeta_0 + \mc K$, hence is also globally feasible on $\Xi$, but not locally linear feasible at $\bm \zeta_0$. In particular, we construct them as 
\begin{equation}\label{eqn:general_example_whb}
    \bm W=\begin{bmatrix}
            1\\1\\-1\\-1
        \end{bmatrix}, \quad \bm H = \begin{bmatrix}
            -\bm p_1^\top\\
            -\bm p_2^\top\\
            \bm q_1^\top\\
            \bm q_2^\top\\
        \end{bmatrix}, \quad \text{and} \quad \bm b=\bm H\bm \zeta_0,
\end{equation}
where the vectors $\bm p_1, \bm p_2, \bm q_1, \bm q_2 \in \mb R^d$ are constructed later according to the geometry of the tangent cone $\mc K$ at the non-simple vertex $\bzz$. The next lemma shows why this construction is useful. Specifically, under the structure prescribed in \eqref{eqn:general_example_whb} for \eqref{eqn:linear_sys}, both global feasibility and locally linear feasibility admit an exact characterization through a partial order induced by the dual cone $\mc K^\star$.

\begin{lemma}\label{lem:HK_CK_condition}
    Consider the recourse system~\eqref{eqn:linear_sys} with matrices and vectors in~\eqref{eqn:general_example_whb} and Assumption~\ref{assum:generic_poly}:
    \begin{itemize}
        \item[(a)] System~\eqref{eqn:linear_sys} is globally feasible on $\bm \zeta_0 + \mc K$ if and only if $\bm p_i \preceq_{\mc K^\star} \bm q_j$ for all $i, j\in \{1, 2\}$.
        \item[(b)] System~\eqref{eqn:linear_sys} is locally linear feasible at $\bzz$ if and only if there exists a vector $\bm z\in \mb R^d$ that satisfies $\bm p_i \preceq_{\mc K^\star} \bm z \preceq_{\mc K^\star} \bm q_j$ for all $i, j\in \{1, 2\}$.
    \end{itemize}
\end{lemma}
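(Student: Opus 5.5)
Both parts reduce to the scalar structure of \eqref{eqn:general_example_whb}. Write $\bm u=\bm\xi-\bzz$, so $\bm H\bm\xi-\bm b=\bm H\bm u$ and $\bm u$ ranges over $\mc K$. Since $r=1$ and $\bm W=(1,1,-1,-1)^\top$, the system \eqref{eqn:linear_sys} at a point $\bm\xi=\bzz+\bm u$ reads
\begin{equation*}
y\ge \bm p_1^\top\bm u,\quad y\ge \bm p_2^\top\bm u,\quad y\le \bm q_1^\top\bm u,\quad y\le \bm q_2^\top\bm u .
\end{equation*}
Equivalently, we need $\max_i \bm p_i^\top\bm u\le y\le \min_j \bm q_j^\top\bm u$. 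Throughout, $\bm a\preceq_{\mc K^\star}\bm c$ means $\bm c-\bm a\in\mc K^\star$, i.e.\ $(\bm c-\bm a)^\top\bm u\ge 0$ for all $\bm u\in\mc K$.

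For part (a), global feasibility on $\bzz+\mc K$ holds iff for every $\bm u\in\mc K$ the interval above is nonempty. That is the case iff $\bm p_i^\top\bm u\le\bm q_j^\top\bm u$ for all $i,j$ and all $\bm u\in\mc K$, which is exactly $\bm q_j-\bm p_i\in\mc K^\star$. Since $y$ may depend arbitrarily on $\bm\xi$, no selection issue arises and this pointwise check suffices. As a cross-check one could instead invoke \Cref{lem:gf_charac} with the extreme rays $\bm\lambda$ of $\ker(\bm W^\top)\cap\mb R^4_+$ as in \Cref{thm:counterexample}, but the direct argument is cleaner.

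For part (b), sufficiency: given $\bm z$ with $\bm p_i\preceq_{\mc K^\star}\bm z\preceq_{\mc K^\star}\bm q_j$, the affine policy $y(\bm\xi)=\bm z^\top(\bm\xi-\bzz)$ satisfies all four inequalities on all of $\bzz+\mc K\supseteq\Xi$, hence locally. Necessity: suppose $y(\bm\xi)=a_0+\bm a^\top(\bm\xi-\bzz)$ works on $\Xi\cap B(\bzz,\epsilon)$. Evaluating at $\bm u=\bm 0$ gives $0\le a_0\le 0$, so $a_0=0$. For any $\bm u\in\mc K$, the point $\bzz+t\bm u$ lies in $\Xi\cap B(\bzz,\epsilon)$ for small $t>0$. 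This is the key geometric fact: $\mc K$ is the tangent cone and, because $\Xi$ is a polyhedron, it coincides locally with $\Xi-\bzz$, as used in the proof of \Cref{thm:no_more_d_edges} via the active constraints $\bm G_{\mc I}$. Dividing the inequalities by $t$ yields $\bm p_i^\top\bm u\le\bm a^\top\bm u\le\bm q_j^\top\bm u$ for all $\bm u\in\mc K$, so $\bm z=\bm a$ works.

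The only step needing care is this local-cone identification in the necessity direction of (b). I expect it to be the main, though mild, obstacle. I would justify it from the H-representation: every inactive constraint at $\bzz$ has positive slack, so for small $t$ the point $\bzz+t\bm u$ satisfies it, while the active constraints hold by $\bm G_{\mc I}\bm u\le\bm 0$. Positive homogeneity of the inequalities then finishes the argument.
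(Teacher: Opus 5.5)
Your proof is correct and takes essentially the paper's route. For (a) you use the same pointwise scalar-interval check. For (b) you pin the intercept to zero at $\bzz$ and then use positive homogeneity over the tangent cone. The only minor difference is in (b): the paper tests the inequalities on the edge directions $\bm v_\ell$, relying on the claim from the proof of \Cref{thm:no_more_d_edges} that these are the extreme rays of $\mc K$. You instead test on all $\bm u\in\mc K$ via the H-representation, and you make sufficiency explicit with the policy $y(\bm\xi)=\bm z^\top(\bm\xi-\bzz)$, valid on all of $\bzz+\mc K\supseteq\Xi$; this avoids needing that edge/extreme-ray identification.
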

\begin{proof}[Proof of \Cref{lem:HK_CK_condition}]
    Since $\bm W\in \mb R^{4\times 1}$, the scalar $y$ is one-dimensional. Then~\eqref{eqn:linear_sys} is globally feasible if and only if for any $\bm k\in \mc K$, there exists $y(\bm k)$ such that $\bm W y + \bm H\bm k \geq \bm 0$. By construction~\eqref{eqn:general_example_whb}, the above vector inequality is equivalent to that $\bm p_i^\top \bm k \leq y \leq \bm q_j^\top \bm k$, or $\bm p_i^\top \bm k \leq \bm q_j^\top \bm k$ for all $i, j\in \{1, 2\}$. Noting that the inequality is true for all $\bm k\in \mc K$, if follows that~\eqref{eqn:linear_sys} is globally feasible on $\bm \zeta_0 + \mc K$ if and only if $\bm q_j - \bm p_i \in \mc K^\star$ for all $i, j\in \{1, 2\}$. Then, part $(a)$ is a direct consequence of this condition. 
    
    For part $(b)$, system ~\eqref{eqn:linear_sys} is locally linear feasible at $\bzz$ if and only if there exists $\bm z\in \mb R^d, c\in \mb R$ and $\epsilon>0$, such that $\bm W (\bm z^\top\bm \zeta + c) + \bm H\bm \zeta \bm \geq \bm b$ for all $\bm \zeta \in \Xi \cap B(\bzz, \epsilon)$. Substituting $\bz = \bzz$ implies $c=-\bm z^\top \bzz$. Thus, the condition is equivalent to that $\exists \; \bm z$ such that $\bm W \bm z^\top \bm v_{\ell} + \bm H\bm v_{\ell} \geq \bm 0$ for all $\ell\in L$, where $\{\bm v_1, \ldots, \bm v_L\}$ are the edge directions of $\Xi$ at $\bzz$. According to the proof of \Cref{thm:no_more_d_edges}, $\{\bm v_1, \ldots, \bm v_L\}$ are also the extreme rays of $\mc K$. So, we can further write the condition as $\exists \; \bm z$ such that, the transpose of each row vector of $\bm W \bm z^\top + \bm H$ is in $\mc K^\star$, which implies the desired condition.   
\end{proof}

Lemma~\ref{lem:HK_CK_condition} reduces the problem to an interpolation question in a partially ordered vector space. Global feasibility requires two lower bounds to lie below two upper bounds. Local linear feasibility is stronger. It requires an intermediate vector between them. The next lemma recalls the classical fact that this interpolation property holds exactly for simplicial cones.

\begin{lemma}[Riesz interpolation property]\label{lem:riesz}
    Let $\mc C\subset \mb R^d$ be any pointed polyhedron cone with full dimension $\dim \mc C = d$, then the following are equivalent.
    \begin{itemize}
        \item[(a)] For every $\bm a_1, \bm a_2, \bm b_1, \bm b_2 \in \mb R^d$ with $\bm a_i \preceq_{\mc C} \bm b_j$ for all $i, j\in \{1, 2\}$, there exists $\bm z\in \mb R^d$ such that $\bm a_i \preceq_{\mc C} \bm z \preceq_{\mc C} \bm b_j$ for all $i, j\in \{1, 2\}$.
        \item[(b)] Every pair $\{\bm x, \bm y\} \subset \mb R^d$ has a least upper bound in the partial order $\preceq_{\mc C}$.
        \item[(c)] The cone $\mc C$ is a simplicial cone; that is, there are exactly $d$ linearly independent edges of $\mc C$ incident to $\bm 0$, or $d$ linearly independent extreme rays of $\mc C$.
    \end{itemize}
\end{lemma}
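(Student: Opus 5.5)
I will prove the cycle (c)$\Rightarrow$(b)$\Rightarrow$(a)$\Rightarrow$(c), where only the last implication carries real content.

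\textbf{(c)$\Rightarrow$(b).} If $\mc C = {\rm cone}\{\bm v_1,\ldots,\bm v_d\}$ with $\bm V=[\bm v_1,\ldots,\bm v_d]$ invertible, the change of coordinates $\bm x\mapsto \bm V^{-1}\bm x$ is an order isomorphism from $(\mb R^d,\preceq_{\mc C})$ onto $(\mb R^d,\preceq_{\mb R^d_+})$. In the componentwise order the least upper bound of $\{\bm x,\bm y\}$ is $\max(\bm x,\bm y)$ taken coordinatewise. Pulling this back gives $\bm V\max(\bm V^{-1}\bm x,\bm V^{-1}\bm y)$ as the least upper bound in $\preceq_{\mc C}$.

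\textbf{(b)$\Rightarrow$(a).} Given $\bm a_i\preceq_{\mc C}\bm b_j$, set $\bm z=\sup\{\bm a_1,\bm a_2\}$. Then $\bm a_i\preceq_{\mc C}\bm z$ by definition. Each $\bm b_j$ is an upper bound of $\{\bm a_1,\bm a_2\}$, so leastness gives $\bm z\preceq_{\mc C}\bm b_j$.

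\textbf{(a)$\Rightarrow$(c): setup.} I argue by contraposition. Since $\mc C$ is pointed, full-dimensional and polyhedral, it has $L\ge d$ extreme rays, and their span is $\mb R^d$. Suppose $L>d$. I want to construct $\bm a_1,\bm a_2,\bm b_1,\bm b_2$ that satisfy the hypothesis of (a) but admit no interpolant. By translation invariance I may take $\bm a_1=\bm 0$. A cleaner choice is the symmetric one: take $\bm a_1=\bm 0$, $\bm a_2=\bm u$, and find two minimal upper bounds of $\{\bm 0,\bm u\}$.

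\textbf{(a)$\Rightarrow$(c): construction.} Because the extreme rays are linearly dependent, there is a nontrivial relation $\sum_{\ell}\lambda_\ell\bm v_\ell=\bm 0$. Split it into positive and negative parts to get
\[
\bm u:=\sum_{\ell\in P}\lambda_\ell\bm v_\ell=\sum_{\ell\in N}|\lambda_\ell|\bm v_\ell,
\]
with $P$ and $N$ disjoint and both nonempty, since $\mc C$ is pointed. Both sums lie in $\mc C$, so $\bm 0\preceq\bm u$. This alone yields no obstruction, because $\bm u$ is itself then the supremum. So instead I take $\bm a_1=\bm 0$, $\bm a_2=\bm u-\sum_{\ell\in N}|\lambda_\ell|\bm v_\ell+\ldots$; more concretely, I choose a minimal dependent set (a circuit) of extreme rays.

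Fix $\bm w=\sum_{\ell\in P}\lambda_\ell\bm v_\ell$ and set
\[
\bm a_1=\bm 0,\qquad \bm a_2=\bm w-\lambda_{p}\bm v_{p}\ \text{for some } p\in P,\qquad \bm b_1=\bm w,
\]
with $\bm b_2=\bm w-\lambda_p\bm v_p+\bm v_p'$ obtained similarly from the $N$-side. I would tune these choices so that any interpolant $\bm z$ must satisfy $\bm b_j-\bm z\in\mc C$ and $\bm z-\bm a_i\in\mc C$ simultaneously. The circuit property means that the only ways to write $\bm w$ as a nonnegative combination of the extreme rays are the $P$-representation, the $N$-representation, and their convex combinations, taken on the relevant face.

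\textbf{Where the difficulty lies.} The main obstacle is exactly this combinatorial step: pinning down explicit $\bm a_i,\bm b_j$ and proving that no interpolant exists. I would first try a Farkas/duality argument. Interpolation is the feasibility of a linear system in $\bm z$. Its infeasibility certificate is a tuple of dual vectors in $\mc C^\star$ with zero sum, which can be built from facet normals of $\mc C$ separating the $P$- and $N$-supports of the circuit. Lemma~\ref{lem:HK_CK_condition} together with this construction then also produces the general counterexamples.

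If the direct route resists, a fallback is to use the theorem of Choquet–Kendall type that a finite-dimensional ordered space with the Riesz interpolation property and a closed generating cone is a lattice. I would then invoke the classical fact that lattice cones in $\mb R^d$ are exactly the simplicial ones, proving the latter by showing that the extreme rays must be linearly independent: a lattice forces unique decomposition into extreme-ray coordinates.
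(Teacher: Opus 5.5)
Your (c)$\Rightarrow$(b) is the paper's order-isomorphism argument. Your one-line (b)$\Rightarrow$(a) via $\bm z=\sup\{\bm a_1,\bm a_2\}$ is correct; the paper instead cites the equivalence (a)$\Leftrightarrow$(b) from Aliprantis--Tourky.

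\textbf{The gap.} The problem is (a)$\Rightarrow$(c), the only implication with real content, and your direct route never becomes an argument:
\begin{itemize}
\item The witnesses are not well defined. By your own definition of $\bm u$, the expression $\bm u-\sum_{\ell\in N}|\lambda_\ell|\bm v_\ell$ is just $\bm 0$, and $\bm v_p'$ is never specified.
\item Nothing is shown about the nonexistence of $\bm z$, and the Farkas certificate is only gestured at.
\item The structural claim you planned to use is false. A circuit does not force the $P$- and $N$-representations (and their convex combinations) to be the only nonnegative representations of $\bm w$. Take the cone over a regular hexagon, $\bm r_j=(\cos\frac{(j-1)\pi}{3},\sin\frac{(j-1)\pi}{3},1)$. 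The circuit $\bm r_1+\bm r_4=\bm r_2+\bm r_5=\bm w$ also satisfies $\bm w=\bm r_3+\bm r_6$. Since $\bm w$ is interior, no ``relevant face'' rescues the claim.
\end{itemize}
Your fallback (interpolation with a closed generating cone in finite dimension gives a lattice, and lattice cones are simplicial) is exactly the machinery the paper imports: Theorem~1.54 and Corollary~2.48 for (a)$\Leftrightarrow$(b), and Yudin's theorem for (b)$\Rightarrow$(c). Relying on it therefore collapses your proof into the paper's. Also, ``a lattice forces unique decomposition into extreme-ray coordinates'' merely restates linear independence of the extreme rays, which is what must be proved.

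\textbf{A self-contained fix.} Instead of hunting for explicit witnesses, pass through Riesz decomposition; this replaces both the Choquet--Kendall step and Yudin's theorem.
\begin{itemize}
\item \emph{Step 1 (decomposition).} Suppose $\bm 0\preceq_{\mc C}\bm x\preceq_{\mc C}\bm y_1+\bm y_2$ with $\bm y_1,\bm y_2\in\mc C$. Apply (a) to $\bm a_1=\bm 0$, $\bm a_2=\bm x-\bm y_2$, $\bm b_1=\bm x$, $\bm b_2=\bm y_1$; all four hypotheses are immediate. An interpolant $\bm z$ then satisfies $\bm 0\preceq_{\mc C}\bm z\preceq_{\mc C}\bm y_1$, $\bm z\preceq_{\mc C}\bm x$, and $\bm x-\bm z\preceq_{\mc C}\bm y_2$.
\item \emph{Step 2 (extremality).} If $\bm v$ spans an extreme ray and $\bm 0\preceq_{\mc C}\bm z\preceq_{\mc C}c\bm v$, then $\bm z=t\bm v$. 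Hence any vector lying below multiples of two distinct extreme rays is $\bm 0$.
\item \emph{Step 3 (peeling).} Take your split $\sum_{\ell\in P}\lambda_\ell\bm v_\ell=\sum_{\ell\in N}|\lambda_\ell|\bm v_\ell$; any dependency works, no circuit is needed. Fix $q\in N$ and set $\bm x=|\lambda_q|\bm v_q$, so $\bm x\preceq_{\mc C}\sum_{\ell\in P}\lambda_\ell\bm v_\ell$.
\item Remove one $p\in P$ at a time, applying Step 1 with $\bm y_1=\lambda_p\bm v_p$ and $\bm y_2$ the remaining sum. The piece $\bm z$ lies below both $\lambda_p\bm v_p$ and $|\lambda_q|\bm v_q$, so $\bm z=\bm 0$ by Step 2, leaving $\bm x\preceq_{\mc C}\bm y_2$.
\item After $|P|$ steps, $\bm 0\preceq_{\mc C}\bm x\preceq_{\mc C}\bm 0$, so $\bm x=\bm 0$ by pointedness. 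This is a contradiction.
\end{itemize}
Hence the extreme rays are linearly independent. Since they span $\mb R^d$, there are exactly $d$ of them, and (c) holds.
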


\begin{proof}[Proof of \Cref{lem:riesz}]
    Since $\dim \mc C = d$, we have $\mc C - \mc C = \mb R^d$, so $\mc C$ is a generating wedge of $\mb R^d$ (see Definition~1.5 of \citealt{aliprantis2007cones}). Condition~$(a)$ is the Riesz interpolation property (see Definition~1.52 and Lemma~1.53 of \citealt{aliprantis2007cones}), and condition~$(b)$ states that $(\mb R^d, \preceq_{\mc C})$ is a Riesz space (see Definition~1.14 of \citealt{aliprantis2007cones}). Since the Riesz interpolation property is equivalent to the Riesz decomposition property (Theorem~1.54 of \citealt{aliprantis2007cones}), which in turn is equivalent to $(\mb R^d, \preceq_{\mc C})$ being a Riesz space (Corollary~2.48 of \citealt{aliprantis2007cones}), conditions~$(a)$ and~$(b)$ are equivalent.

    We now show that conditions~$(b)$ and~$(c)$ are equivalent. If $(c)$ holds, then $(\mb R^d, \preceq_{\mc C})$ is order isomorphic to $(\mb R^d, \leq)$. Since $(\mb R^d, \leq)$ is a Riesz space, so is $(\mb R^d, \preceq_{\mc C})$, and $(b)$ follows. Conversely, if $(b)$ holds, then $(\mb R^d, \preceq_{\mc C})$ is a Riesz space. Moreover, $(\mb R^d, \preceq_{\mc C})$ is Archimedean (see Definition~1.10 in \citealt{aliprantis2007cones}), so $\mc C$ is an Archimedean lattice cone. By Yudin's Theorem (Theorem~3.21 in \citealt{aliprantis2007cones}), $\mc C$ is a Yudin cone (see Definition~3.15 in \citealt{aliprantis2007cones}), and hence a simplicial cone. Thus $(b)$ implies $(c)$.  
\end{proof}

We will apply Lemma~\ref{lem:riesz} to the dual cone $\mc K^\star$ of the tangent cone. To transfer the non-simpliciality assumption from $\mc K$ to $\mc K^\star$, we need the following elementary fact. It shows that simpliciality, and hence non-simpliciality, is preserved under polarity for pointed polyhedral cones.

\begin{lemma}\label{lem:kkstar_simplex}
    Let $\mc C\subset \mb R^d$ be a pointed polyhedral cone, then the cone $\mc C$ is a simplicial cone if and only if the dual cone $\mc C^\star$ is a simplicial cone.
\end{lemma}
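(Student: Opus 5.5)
The plan is to use the explicit description of a simplicial cone as an invertible linear image of the nonnegative orthant, and then symmetrize the argument with the bipolar theorem. As in Lemma~\ref{lem:riesz}(c), ``simplicial'' means that the cone has exactly $d$ linearly independent extreme rays in $\mb R^d$. Equivalently, $\mc C = \bm V\mb R^d_+ = \{\bm V\bm\lambda : \bm\lambda \geq \bm 0\}$ for some invertible $\bm V\in\mb R^{d\times d}$ whose columns are the extreme rays. This equivalence holds because a pointed polyhedral cone is the conic hull of its extreme rays (Minkowski--Weyl, as already used in the proofs of \Cref{lem:gf_charac} and \Cref{thm:no_more_d_edges}). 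Conversely, the columns of an invertible $\bm V$ are exactly the extreme rays of $\bm V\mb R^d_+$, since $\bm V$ is a linear isomorphism mapping $\mb R^d_+$ onto the cone and the extreme rays of $\mb R^d_+$ are the $\bm e_i$.

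\textbf{Forward direction.} Suppose $\mc C = \bm V\mb R^d_+$ with $\bm V$ invertible. We compute the dual directly:
\[
\mc C^\star = \{\bm y : \bm y^\top \bm V\bm\lambda \geq 0 \ \forall \bm\lambda\geq \bm 0\} = \{\bm y : \bm V^\top \bm y \geq \bm 0\} = \bm V^{-\top}\mb R^d_+ .
\]
Since $\bm V^{-\top}$ is invertible, $\mc C^\star$ is generated by the $d$ linearly independent columns of $\bm V^{-\top}$. Hence $\mc C^\star$ is a simplicial cone, and in particular it is pointed and full-dimensional.

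\textbf{Reverse direction.} Suppose $\mc C^\star$ is simplicial. Then $\mc C^\star$ is itself a pointed polyhedral cone of the form $\bm U\mb R^d_+$ with $\bm U$ invertible. Applying the forward direction to $\mc C^\star$ shows that $\mc C^{\star\star}$ is simplicial. Since $\mc C$ is a polyhedral, hence closed convex, cone, the bipolar theorem gives $\mc C^{\star\star} = \mc C$, so $\mc C$ is simplicial.

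\textbf{Main obstacle.} The only delicate point is the dimension bookkeeping. If $\mc C$ were pointed but not full-dimensional, then $\mc C^\star$ would contain a line and could not be simplicial in the sense of Lemma~\ref{lem:riesz}(c). Both directions already handle this correctly, so no extra case analysis is needed. In the forward direction, simpliciality of $\mc C$ forces $d$ independent rays and hence full dimension. In the reverse direction, simpliciality of $\mc C^\star$ forces $\mc C = \mc C^{\star\star}$ to be simplicial, hence full-dimensional. I would state this explicitly, together with the standard facts that the dual of a pointed full-dimensional polyhedral cone is again pointed and full-dimensional and that extreme rays are preserved under linear isomorphisms. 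These remarks make clear that the lemma applies to $\mc K$ and $\mc K^\star$ under Assumption~\ref{assum:generic_poly}, where $\mc K$ is full-dimensional and pointed.
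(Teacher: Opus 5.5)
Your proof is correct and follows the paper's argument essentially verbatim. For the forward direction you compute $\mathcal{C}^\star = \{\bm u : \bm V^\top \bm u \geq \bm 0\} = (\bm V^\top)^{-1}\mathbb{R}^d_+$, and for the converse you apply this to $\mathcal{C}^\star$ and use the bipolar identity $\mathcal{C} = \mathcal{C}^{\star\star}$; your additional remarks on the equivalent descriptions of simpliciality and on full-dimensionality are accurate and simply make explicit what the paper leaves implicit.
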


\begin{proof}[Proof of \Cref{lem:kkstar_simplex}]
    We first assume that $\mc C$ is a simplicial cone with $\mc C = {\rm cone}\{\bm v_1, \ldots, \bm v_d\}$, where $\{\bm v_1, \ldots, \bm v_d\}$ is linearly independent. Let $\bm V = [\bm v_1, \ldots, \bm v_d]\in \mb R^{d\times d}$ be invertible, then
    \begin{equation*}
        \begin{array}{rl}
           \mc C^\star = \{\bm u\in \mb R^d ~:~ \bm u^\top \bm V\bm \lambda \geq 0 \quad \forall \bm \lambda \geq \bm 0 \} \; = \; & \{\bm u\in \mb R^d ~:~ (\bm V^\top \bm u)^\top\bm \lambda \geq 0 \quad \forall \bm \lambda \geq \bm 0 \}\\
           = \; & \{\bm u\in \mb R^d ~:~ \bm V^\top \bm u \geq \bm 0 \}\\
           = \; & \{\bm u\in \mb R^d ~:~ \exists \; \bm x\geq \bm 0 \text{ such that } \bm u = (\bm V^{\top})^{-1} \bm x \}.
        \end{array}
    \end{equation*}
    Then, let $\bm w_1, \ldots, \bm w_d$ be the column vectors of $(V^\top)^{-1}$, which is linearly independent. It follows that $\mc C^\star = {\rm cone}\{\bm w_1, \ldots, \bm w_d\}$, which proves that $\mc C^\star$ is also a simplicial cone. On the other hand, if $\mc C^\star$ is a simplicial cone, it follows similarly that $\mc C^{\star\star}$ is a simplicial cone. Since $\mc C$ is a polyhedron cone, we have $\mc C = \mc C^{\star\star}$, which implies that $\mc C$ is also a simplicial cone and finishes the proof.     
\end{proof}

We now combine the preceding ingredients. Since the tangent cone at a non-simple vertex is not simplicial, Lemmas~\ref{lem:riesz} and~\ref{lem:kkstar_simplex} imply that the order induced by $\mc K^\star$ fails the interpolation property. The vectors witnessing this failure are then inserted into the recourse-system construction~\eqref{eqn:general_example_whb}.

\begin{theorem}[Structural impossibility theorem]\label{thm:general_impossible}
    For any polyhedron $\Xi$ satisfies Assumption~\ref{assum:generic_poly}, there exists $\bm W, \bm H$ and $\bm b$ such that~\eqref{eqn:linear_sys} is globally feasible on $\Xi$ but not locally linear feasible at $\bzz$.
\end{theorem}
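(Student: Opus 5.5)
The plan is to combine Lemmas~\ref{lem:HK_CK_condition}, \ref{lem:riesz} and \ref{lem:kkstar_simplex}. Under Assumption~\ref{assum:generic_poly}, $\bzz$ is a vertex of the full-dimensional polyhedron $\Xi$. As in the proof of \Cref{thm:no_more_d_edges}, the tangent cone $\mc K=\mc T_{\bzz}=\{\bm \eta:\bm G_{\mc I}\bm\eta\le \bm 0\}$ is a pointed polyhedral cone with $\dim\mc K=d$. Its extreme rays are exactly the edge directions of $\Xi$ at $\bzz$. Since there are more than $d$ such edges, $\mc K$ has more than $d$ extreme rays and is therefore not simplicial.

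Next we move to the dual cone. Because $\mc K$ is pointed and full-dimensional, $\mc K^\star$ is also a pointed, full-dimensional polyhedral cone: pointedness of $\mc K^\star$ follows from ${\rm int}\,\mc K\neq\emptyset$, and full dimension of $\mc K^\star$ follows from pointedness of $\mc K$. By Lemma~\ref{lem:kkstar_simplex}, $\mc K^\star$ is not simplicial. Lemma~\ref{lem:riesz} applied to $\mc C=\mc K^\star$ then shows that condition~(a) fails. Hence there exist $\bm a_1,\bm a_2,\bm b_1,\bm b_2\in\mb R^d$ with $\bm a_i\preceq_{\mc K^\star}\bm b_j$ for all $i,j\in\{1,2\}$, but with no $\bm z$ satisfying $\bm a_i\preceq_{\mc K^\star}\bm z\preceq_{\mc K^\star}\bm b_j$ for all $i,j$.

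We then set $\bm p_i=\bm a_i$ and $\bm q_j=\bm b_j$, and define $\bm W,\bm H,\bm b$ by \eqref{eqn:general_example_whb}. By Lemma~\ref{lem:HK_CK_condition}(a), the system \eqref{eqn:linear_sys} is globally feasible on $\bzz+\mc K$. Since $\Xi\subset\bzz+\mc K$, it is also globally feasible on $\Xi$, because the pointwise policy restricts. By Lemma~\ref{lem:HK_CK_condition}(b), the absence of an interpolating $\bm z$ means the system is not locally linear feasible at $\bzz$.

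The main obstacle is the bookkeeping between the two cones. Lemma~\ref{lem:riesz} needs its hypotheses (pointed and full-dimensional) checked for $\mc K^\star$, not for $\mc K$. Lemma~\ref{lem:HK_CK_condition}(b) also implicitly uses that locally linear feasibility on $\Xi\cap B(\bzz,\epsilon)$ reduces to the extreme rays of $\mc K$. This reduction holds because $\Xi\cap B(\bzz,\epsilon)$ coincides with $(\bzz+\mc K)\cap B(\bzz,\epsilon)$ for small $\epsilon$, and because the constraints are positively homogeneous in $\bm\zeta-\bzz$ once $c=-\bm z^\top\bzz$. I would state these checks explicitly, and then the theorem follows directly.
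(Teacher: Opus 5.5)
Your proposal is correct and follows essentially the same route as the paper. The tangent cone $\mathcal K$ is non-simplicial, which transfers to $\mathcal K^\star$ via \Cref{lem:kkstar_simplex}; the failure of Riesz interpolation from \Cref{lem:riesz} then supplies $\bm p_1,\bm p_2,\bm q_1,\bm q_2$, and \Cref{lem:HK_CK_condition} applied to \eqref{eqn:general_example_whb} finishes the argument, where the paper likewise invokes \Cref{lem:riesz} for $\mathcal K^\star$ but without checking that $\mathcal K^\star$ is pointed and full-dimensional, which you verify explicitly.
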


\begin{proof}[Proof of \Cref{thm:general_impossible}]
    Consider the tangent cone $\mc K=\mc T_{\bzz}$. By Assumption~\ref{assum:generic_poly}, the cone $\mc K$ is a pointed full-dimensional polyhedral cone in $\mb R^d$ with more than $d$ extreme rays. Thus, $\mc K$ is not simplicial. By \Cref{lem:kkstar_simplex}, the dual cone $\mc K^\star$ is not simplicial either. Applying \Cref{lem:riesz} to $\mc C=\mc K^\star$, condition $(a)$ in \Cref{lem:riesz} fails. Hence, there exist vectors $\bm p_1,\bm p_2,\bm q_1,\bm q_2\in\mb R^d$ such that
    \[
        \bm p_i \preceq_{\mc K^\star} \bm q_j
        \qquad \forall i,j\in\{1,2\},
    \]
    but there does not exist $\bm z\in\mb R^d$ satisfying
    \[
        \bm p_i \preceq_{\mc K^\star} \bm z \preceq_{\mc K^\star} \bm q_j
        \qquad \forall i,j\in\{1,2\}.
    \]
    We design $\bm W,\bm H$ and $\bm b$ according to~\eqref{eqn:general_example_whb} with these vectors. By \Cref{lem:HK_CK_condition}, the inequalities
    $\bm p_i \preceq_{\mc K^\star} \bm q_j$ for all $i,j\in\{1,2\}$ imply that system~\eqref{eqn:linear_sys} is globally feasible on $\bzz+\mc K$. Since $\Xi\subseteq \bzz+\mc K$, the same system is globally feasible on $\Xi$. In addition, the nonexistence of an interpolating vector $\bm z$ implies that system~\eqref{eqn:linear_sys} is not locally linear feasible at $\bzz$ by \Cref{lem:HK_CK_condition}. Thus, we obtained a recourse system that is globally feasible on $\Xi$ but not locally linear feasible at $\bzz$. This completes the proof. 
\end{proof}

Theorem~\ref{thm:general_impossible} proves the necessary direction of the dichotomy. If a polyhedron support has even a single non-simple vertex, then global feasibility does not imply locally linear feasibility uniformly over all recourse systems defined on that support. Together with Corollary~\ref{coro:simple_poly}, this shows that the simpleness of $\Xi$ is precisely the geometric condition under which A4b follows from A4a uniformly over all recourse systems. Hence, the obstruction underlying the open question of \citet{bertsimas2022two} is completely characterized by the local geometry of the support at its vertices.

However, this dichotomy does not mean that A4b always fails on non-simple supports. It only shows that A4b is no longer automatic from A4a. For a given recourse system on a given non-simple support, locally linear feasibility may still hold at the non-simple vertices. Therefore, once the support is non-simple, the remaining question becomes computational: how can one verify A4b at a vertex of interest? In the next section, we answer this question by developing a polynomial-time algorithm for checking locally linear feasibility from the local edge structure of the support.

\section{A Verification Algorithm for Locally Linear Feasibility}\label{sec:algorithm}

The previous sections characterize when locally linear feasibility follows automatically from global feasibility. If the support is simple, then no further verification is needed. If the support has a non-simple vertex, however, locally linear feasibility may or may not hold for a given recourse system. In this section, we provide equivalent conditions for checking whether a system~\eqref{eqn:linear_sys} is locally linear feasible at a vertex $\bzz$. We show that, once $\bm W, \bm H, \bm b, \bzz$ and those edge directions incident to $\bzz$ are given, this verification can be performed in polynomial time. Specifically, we design an explicit algorithm to verify it within finite steps by solving structured linear programs.

Throughout this section, we fix a vertex $\bzz$ of $\Xi$ and assume that the edge directions incident to $\bzz$ are available. Equivalently, the tangent cone at $\bzz$ is given as
\[
    \mc T_{\bzz}={\rm cone}\{\bm v_1,\ldots,\bm v_L\}.
\]
Under this local geometric input, we first derive equivalent conditions for locally linear feasibility in the following lemma. We then use these conditions to build the verification algorithm.

\begin{lemma}\label{lem:llf_technical_condition}
    Let $\bzz$ be a vertex of $\Xi$, and let $\bm v_1,\ldots,\bm v_L$ be the directions of the edges incident to $\bzz$. Then system~\eqref{eqn:linear_sys} is locally linear feasible at $\bzz$ if and only if the set $\mc F = \cap_{i=1}^m \; \mc F_i$ is non-empty. Here,
    \begin{equation}\label{eqn:mcf}
    \mc F_i = \Big\{(\bm Y, \bm y) \,:\, R_i(\bm Y, \bm y) > b_i \Big\} \; \bigcup \; \Big\{(\bm Y, \bm y) \,:\, R_i(\bm Y, \bm y) = b_i, \;  \bm W_i \bm Y\bm v_{\ell} + \bm H_i \bm v_{\ell} \geq 0\quad \forall \ell\in [L]\Big\},
    \end{equation}
    where $R_i(\bm Y, \bm y) = \bm W_i(\bm Y\bzz + \bm y) + \bm H_i\bzz$ is an affine function of $(\bm Y, \bm y)$ for each $i\in [m]$, and $\bm W_i$ and $\bm H_i$ are the $i$-th row vectors of matrices $\bm W$ and $\bm H$, respectively.
\end{lemma}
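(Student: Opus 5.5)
The plan is to translate locally linear feasibility into a condition on the tangent cone $\mc T_{\bzz}$, and then split constraint by constraint according to whether it is active at $\bzz$. Write an affine policy as $\bm y(\bm\xi)=\bm y+\bm Y\bm\xi$. Each row $i$ of~\eqref{eqn:linear_sys} along this policy is the affine function
\[
g_i(\bm\xi)=\bm W_i(\bm y+\bm Y\bm\xi)+\bm H_i\bm\xi-b_i .
\]
At $\bzz$ we have $g_i(\bzz)=R_i(\bm Y,\bm y)-b_i$. For a direction $\bm\eta$,
\[
g_i(\bzz+\bm\eta)=R_i(\bm Y,\bm y)-b_i+(\bm W_i\bm Y+\bm H_i)\bm\eta .
\]
So it suffices to show the following claim: a fixed $(\bm Y,\bm y)$ satisfies $g_i\ge 0$ on $\Xi\cap B(\bzz,\epsilon)$ for some $\epsilon>0$ and all $i$ if and only if $(\bm Y,\bm y)\in\mc F_i$ for every $i$.

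\textbf{Local geometry.} From the H-representation used in the proof of \Cref{thm:no_more_d_edges}, there is $\epsilon_0>0$ with
\[
\Xi\cap B(\bzz,\epsilon_0)=(\bzz+\mc T_{\bzz})\cap B(\bzz,\epsilon_0).
\]
The reason is that constraints inactive at $\bzz$ remain strictly satisfied near $\bzz$. That proof also shows $\mc T_{\bzz}={\rm cone}\{\bm v_1,\ldots,\bm v_L\}$, because the extreme rays of the pointed tangent cone are exactly the edge directions. If $\dim\Xi<d$, I first reduce to the full-dimensional case by the affine map in that proof; alternatively I note that only $\bzz+\mc T_{\bzz}$ near $\bzz$ is used, which does not need full dimension.

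\textbf{Sufficiency.} Take $(\bm Y,\bm y)\in\mc F$. For indices with $R_i>b_i$, continuity of $g_i$ gives $g_i>0$ on a small ball around $\bzz$. For indices with $R_i=b_i$, we have $g_i(\bzz+\bm\eta)=(\bm W_i\bm Y+\bm H_i)\bm\eta$. Any $\bm\eta\in\mc T_{\bzz}$ is a nonnegative combination $\sum_\ell\lambda_\ell\bm v_\ell$, and each $(\bm W_i\bm Y+\bm H_i)\bm v_\ell\ge0$, so $g_i\ge0$ on all of $\bzz+\mc T_{\bzz}$. Taking $\epsilon$ as the minimum of $\epsilon_0$ and the finitely many continuity radii gives local linear feasibility.

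\textbf{Necessity.} Suppose an affine policy works on $\Xi\cap B(\bzz,\epsilon)$. Evaluating at $\bzz$ gives $R_i\ge b_i$ for every $i$. If $R_i=b_i$, scale each edge direction: $\bzz+t\bm v_\ell\in\Xi\cap B(\bzz,\epsilon)$ for small $t>0$. Then $0\le g_i(\bzz+t\bm v_\ell)=t(\bm W_i\bm Y+\bm H_i)\bm v_\ell$, which gives the edge inequalities. Hence $(\bm Y,\bm y)\in\mc F_i$ for every $i$.

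The main obstacle is justifying the local identification of $\Xi$ with $\bzz+\mc T_{\bzz}$ and the equality of $\mc T_{\bzz}$ with the cone over the edge directions. Both are available from \Cref{thm:no_more_d_edges}, so I expect to cite that argument rather than redo it.
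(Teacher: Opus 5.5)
Your proof is correct and follows essentially the same route as the paper. Both arguments identify $\Xi$ near $\bzz$ with $\bzz$ plus the cone generated by the edge directions. For necessity, both evaluate at $\bzz$ and along scaled edge directions. For sufficiency, both split rows into slack ones ($R_i>b_i$, handled by continuity) and tight ones ($R_i=b_i$, handled by conic combination). Your exact local identity $\Xi\cap B(\bzz,\epsilon_0)=(\bzz+\mc T_{\bzz})\cap B(\bzz,\epsilon_0)$, taken from the H-representation, is if anything a cleaner justification than the paper's $\epsilon_1<\epsilon_2$ sandwich in $\bm\lambda$-coordinates.
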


\begin{proof}[Proof of \Cref{lem:llf_technical_condition}]
    By definition, system~\eqref{eqn:linear_sys} is locally linear feasible at $\bzz$ if and only if 
    \begin{equation*}
        \bm W \left(\bm Y\bz + \bm y\right) + \bm H\bz \geq \bm b \quad \forall\bz \in \Xi\cap B(\bzz, \epsilon)
    \end{equation*}
for some $\bm Y$, $\bm y$, and $\epsilon>0$. Noting that there always exists $0 < \epsilon_1 < \epsilon_2$ with
\begin{equation*}
  \bzz + \left\{\sum_{\ell\in [L]} \lambda_{\ell} \bm v_{\ell} ~:~  \bm \lambda \geq \bm 0, \; \|\bm \lambda\| \leq \epsilon_1\right\} \; \subset  \; \Xi\cap B(\bzz, \epsilon) \; \subset \; \bzz + \left\{\sum_{\ell\in [L]} \lambda_{\ell} \bm v_{\ell} ~:~  \bm \lambda \geq \bm 0, \; \|\bm \lambda\| \leq \epsilon_2\right\},
\end{equation*}
it follows system~\eqref{eqn:linear_sys} is locally linear feasible at $\bzz$ if and only if 
    \begin{equation*}
        \bm W \left(\bm Y\left(\bzz + \sum_{\ell\in [L]} \lambda_{\ell} \bm v_{\ell}\right) + \bm y\right) + \bm H\left(\bzz + \sum_{\ell\in [L]} \lambda_{\ell} \bm v_{\ell}\right) \geq \bm b \quad \forall \bm \lambda \in \mb R^L_+ \cap B(\bm 0, \epsilon).
    \end{equation*}
Taking $\bm \lambda$ as the zero vector and the natural basis of $\mb R^L_+$ with scale $\epsilon/2$ implies
\begin{equation*}
    \bm W \left(\bm Y\left(\bzz + \frac{\epsilon}{2} \bm v_{\ell}\right) + \bm y\right) + \bm H\left(\bzz + \frac{\epsilon}{2} \bm v_{\ell}\right) \geq \bm b, \quad \text{ and } \quad \bm W\left(\bm Y\bzz + \bm y\right) + \bm H\bzz \geq \bm b \quad \forall \ell\in [L].
\end{equation*}
Therefore, it follows that we have to find $\bm Y$ and $\bm y$ such that for sufficiently small $\epsilon>0$, it holds that
\begin{equation*}
   \bm W\left(\bm Y\bzz + \bm y\right) + \bm H\bzz + \frac{\epsilon}{2}\left(\bm W\bm Y\bm v_{\ell} +\bm H\bm v_{\ell}\right) \geq \bm b, \quad \text{ and } \quad \bm W\left(\bm Y\bzz + \bm y\right) + \bm H\bzz \geq \bm b \quad \forall \ell\in [L].
\end{equation*}
To analyze the above inequalities, we shall consider them component-wisely. For each row $i\in [m]$, if $(\bm Y, \bm y)$ is with $\bm W_i(\bm Y\bzz + \bm y) + \bm H_i\bzz > b_i$, then another inequality follows directly for sufficiently small $\epsilon>0$ no matter with the sign of $\bm W_i \bm Y\bm v_{\ell} + \bm H_i \bm v_{\ell}$. On the other hand, if $(\bm Y, \bm y)$ is with $\bm W_i(\bm Y\bzz + \bm y) + \bm H_i\bzz = b_i$, then we must request $\bm W_i \bm Y\bm v_{\ell} + \bm H_i \bm v_{\ell} \geq 0$ so that another inequality holds for small $\epsilon>0$. Therefore, the pair $(\bm Y, \bm y)$ should belong to $\mc F_i$ for each $i\in [m]$, to ensure that the locally linear feasibility holds, which is equivalent to $\mc F\neq \emptyset$ and finishes the proof.   
\end{proof}

\begin{proposition}\label{prop:convex_mcf}
    The set $\mc F_i$ defined in \eqref{eqn:mcf} is convex, hence the intersection set $\mc F$ is also convex.
\end{proposition}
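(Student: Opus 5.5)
The plan is a direct convexity check via the definition. I fix $i\in[m]$ and write $\mc F_i = A_i\cup B_i$. Here $A_i=\{(\bm Y,\bm y): R_i(\bm Y,\bm y)>b_i\}$ is an open half-space in the variable $(\bm Y,\bm y)$, because $R_i$ is affine. The set $B_i$ is the intersection of the hyperplane $\{R_i=b_i\}$ with the $L$ half-spaces $\{\bm W_i\bm Y\bm v_\ell+\bm H_i\bm v_\ell\ge 0\}$. Each of these half-spaces is defined by a function affine in $\bm Y$, since $\bm W_i\bm Y\bm v_\ell$ is linear in the entries of $\bm Y$. Hence $A_i$ and $B_i$ are each convex. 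A union of two convex sets need not be convex, however, so the argument has to exploit the fact that $B_i$ lies on the boundary hyperplane of $A_i$.

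I take two points $P,Q\in\mc F_i$ and $t\in(0,1)$, and set $Z=tP+(1-t)Q$. Affinity of $R_i$ gives $R_i(Z)=tR_i(P)+(1-t)R_i(Q)$. Every point of $\mc F_i$ satisfies $R_i\ge b_i$, so this is always $\ge b_i$. I split into cases.

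\textbf{Case 1: at least one of $P,Q$ lies in $A_i$.} Then one of the two terms is strictly larger than its share of $b_i$, so $R_i(Z)>b_i$ and $Z\in A_i\subset\mc F_i$.

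\textbf{Case 2: both $P,Q$ lie in $B_i$.} Then $Z\in B_i$ by convexity of $B_i$.

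These cases are exhaustive, so $\mc F_i$ is convex. The set $\mc F=\cap_{i=1}^m\mc F_i$ is then convex as an intersection of convex sets.

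The only point needing care is Case 1 with mixed membership, say $P\in A_i$ and $Q\in B_i$. One might worry that the derivative conditions $\bm W_i\bm Y\bm v_\ell+\bm H_i\bm v_\ell\ge 0$ could fail at $Z$. They are not required there, however, because strict inequality $R_i(Z)>b_i$ already places $Z$ in $A_i$, where no derivative conditions are imposed. The key fact is the strictness coming from $t\in(0,1)$ together with $R_i(P)>b_i$. With that observed, the proof is routine.
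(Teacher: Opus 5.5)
Your proof is correct and follows the paper's argument: both split on whether one of the two points has $R_i > b_i$ (strictness survives because $\lambda\in(0,1)$ and $R_i\ge b_i$ on all of $\mc F_i$) or both lie in the closed regime (affinity preserves the equality and the edge-direction inequalities). The paper then concludes convexity of $\mc F$ as an intersection of convex sets, exactly as you do.
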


\begin{proof}[Proof of \Cref{prop:convex_mcf}]
   For any $(\bm Y^1, \bm y^1), (\bm Y^2, \bm y^2) \in \mc F_i$, $\lambda\in (0, 1)$ and $i\in[m]$, we prove that $(\lambda \bm Y^1 + (1-\lambda) \bm Y^2, \lambda \bm y^1 + (1-\lambda) \bm y^2) \in \mc F_i$ by verifying it for different cases. If $R_i(\bm Y^1, \bm y^1) > b_i$ or $R_i(\bm Y^2, \bm y^2) > b_i$, then it follows $R_i(\lambda \bm Y^1 + (1-\lambda) \bm Y^2, \lambda \bm y^1 + (1-\lambda) \bm y^2) > b_i$ from $\lambda \in (0, 1)$. If both $R_i(\bm Y^1, \bm y^1) = b_i$ and $R_i(\bm Y^2, \bm y^2) = b_i$, then it must have $\bm W_i\bm Y^j\bm v_{\ell} + \bm H_i \bm v_{\ell} \geq 0$ for all $j\in \{1, 2\}$ and $\ell\in [L]$. Since both $R_i$ and $\bm W_i\bm Y\bm v_{\ell} + \bm H_i \bm v_{\ell}$ are affine functions in $(\bm Y, \bm y)$, it follows that $R_i(\lambda \bm Y^1 + (1-\lambda) \bm Y^2, \lambda \bm y^1 + (1-\lambda) \bm y^2) = b_i$ and $\bm W_i(\lambda \bm Y^1 + (1-\lambda) \bm Y^2)\bm v_{\ell} + \bm H_i \bm v_{\ell} \geq 0$ for all $\ell\in [L]$. In both cases we verified that $\lambda (\bm Y^1, \bm y^1) + (1-\lambda)(\bm Y^2, \bm y^2)\in \mc F_i$, which implies that $\mc F_i$ is a convex set.   
\end{proof}

One can easily verify that the set $\mc F$ together with $\mc F_i$ is generally not closed due to the strict inequalities. Therefore, unlike checking the global feasibility, there is no linear program, second-order conic program, or any other finite-dimensional closed conic optimization program whose feasible set in $(\bm Y, \bm y)$ equals $\mc F$ for arbitrary input data $(\bm W, \bm H, \bm b, \bzz, \Xi)$.

To address the difficulty from the row-wise disjunction between a strict slack regime and a closed regime in $\mc F$, we bypass the disjunction by maintaining a set $\mc E \subseteq\{1, \ldots, m\}$ of rows that are provably in the closed regime. Once a row sits in $\mc E$, the closed regime is enforced as the linear constraints $R_i=b_i$ and $\bm W_i\bm Y\bm v_{\ell} + \bm H_i \bm v_{\ell} \geq 0 \; \; \forall \ell$. The remaining rows that are outside $\mc E$ must be in the slack regime $R_i>b_i$, but we can also accept $R_i=b_i$ if we then add the row to $\mc E$ on a later iteration.

Intuitively, we would like the index set $\mc E$ to be `valid' in the sense that it correctly identifies those indices for which $R_i = b_i$. Moreover, this validity should be maintained across iterations, and a suitable closed convex polyhedron should be constructed as a surrogate for $\mc F$. As the index set $\mc E$ is updated at each iteration, we aim to find a progressively better surrogate for $\mc F$, until eventually the algorithm can determine whether $\mc F$ is empty by checking certain solvability conditions on the surrogate. Formally, we define the validity of $\mc E$ and the `greater approximation' for $\mc F$ as follows.

\begin{definition}\label{def:valid_approximation}
    Let $\mc E\subseteq\{1, \ldots, m\}$ be an index set. We call $\mc E$ is \textbf{valid} if for all $(\bm Y, \bm y)\in \mc F$ and all $i\in \mc E$, it holds that $R_i(\bm Y, \bm y) = b_i$, where $\mathcal F$ and $R_i(\bm Y, \bm y)$ are defined in \Cref{lem:llf_technical_condition}. Based on $\mc E$, we define the closed convex polyhedron $\mc G(\mc E)$, which serves as an approximation of $\mathcal F$, as
    \begin{equation*}
        \mathcal G(\mc E) =\left\{\left(\boldsymbol{Y}, \boldsymbol{y}\right): R_i(\bm Y, \bm y)=b_i, \;\; \bm W_i\bm Y\bm v_{\ell} + \bm H_i \bm v_{\ell} \geq 0\quad \forall \ell\in [L], \; i\in \mc E; \quad R_i(\bm Y, \bm y) \geq b_i \quad \forall i \notin \mc E\right\}.
    \end{equation*}
\end{definition}

With the above definitions of valid $\mc E$ and proper surrogate $\mcge$ in hand, we would like to know how good the above surrogate $\mc G(\mc E)$ is, and how to improve it through the index set $\mc E$. We answer the first question in the following proposition, which also offers useful properties about $\mcge$.

\begin{proposition}[Sandwich surrogate]\label{prop:sandwich} Let the index set $\mc E\subseteq\{1, \ldots, m\}$ be given.
\begin{itemize}
    \item[(a)] If $\mc E$ is valid, then $\mc F\subset \mc G(\mc E)$.
    \item[(b)] If a point $(\bm Y, \bm y)\in \mc G(\mc E)$ is with that $R_i(\bm Y, \bm y) > b_i$ for every $i\notin \mc E$, then $(\bm Y, \bm y)\in \mc F$.
\end{itemize}
\end{proposition}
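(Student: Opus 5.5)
Both parts follow by unpacking the definitions of $\mc F$, $\mc F_i$ in \eqref{eqn:mcf} and $\mcge$ in Definition~\ref{def:valid_approximation}, row by row. No earlier results beyond Lemma~\ref{lem:llf_technical_condition} (which defines $\mc F$) are needed.

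For part (a), fix $(\bm Y,\bm y)\in\mc F$ and check the constraints of $\mcge$ one row at a time.

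Take first a row $i\in\mc E$. Validity of $\mc E$ gives $R_i(\bm Y,\bm y)=b_i$. Hence $(\bm Y,\bm y)$ is not in the strict branch $\{R_i>b_i\}$ of $\mc F_i$, so it must lie in the closed branch. That branch yields $\bm W_i\bm Y\bm v_\ell+\bm H_i\bm v_\ell\ge 0$ for all $\ell\in[L]$, which are exactly the constraints imposed on row $i$ in $\mcge$.

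Now take a row $i\notin\mc E$. Membership in $\mc F_i$ means either $R_i>b_i$ or $R_i=b_i$; in both cases $R_i\ge b_i$. Since every row's constraints are satisfied, $(\bm Y,\bm y)\in\mcge$.

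For part (b), fix $(\bm Y,\bm y)\in\mcge$ with $R_i(\bm Y,\bm y)>b_i$ for all $i\notin\mc E$, and show $(\bm Y,\bm y)\in\mc F_i$ for every $i\in[m]$.

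If $i\notin\mc E$, the hypothesis $R_i>b_i$ puts $(\bm Y,\bm y)$ in the strict branch of $\mc F_i$. If $i\in\mc E$, the definition of $\mcge$ supplies both $R_i=b_i$ and $\bm W_i\bm Y\bm v_\ell+\bm H_i\bm v_\ell\ge0$ for all $\ell\in[L]$, which is precisely the closed branch of $\mc F_i$. Intersecting over all $i$ gives $(\bm Y,\bm y)\in\mc F$. Note that part (b) does not use validity of $\mc E$.

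There is no real obstacle. The only point needing care is in part (a): for $i\in\mc E$, validity excludes the strict branch, and this is what forces the edge-direction inequalities rather than merely $R_i\ge b_i$.
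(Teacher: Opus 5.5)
Your proposal is correct and follows essentially the same route as the paper's proof. In (a) you use validity to force $R_i = b_i$ on rows in $\mathcal{E}$, then read the edge-direction inequalities off the closed branch of $\mathcal{F}_i$; in (b) you place each row in the appropriate branch of $\mathcal{F}_i$, and, as you note, this part does not need validity.
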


\begin{proof}[Proof of \Cref{prop:sandwich}]
    To prove part $(a)$, we fix any $(\bm Y, \bm y)\in \mc F$, and show that $(\bm Y, \bm y)\in \mcge$ by checking every defining constraint of $\mcge$ in \Cref{def:valid_approximation}. Specifically, for all $i\in \mc E$, it follows that $R_i(\bm Y, \bm y) = b_i$ from the definition of validity. Noting that $(\bm Y, \bm y) \in \mc F \subset \mc F_i$ and $R_i(\bm Y, \bm y) = b_i$, the definition~\eqref{eqn:mcf} of $\mc F_i$ implies that $\bm W_i\bm Y\bm v_{\ell} + \bm H_i \bm v_{\ell} \geq 0\;\; \forall \ell\in [L]$. On the other hand, for all $i\notin \mc E$, we still have $(\bm Y, \bm y) \in \mc F \subset \mc F_i$. By definition of $\mc F_i$ in~\eqref{eqn:mcf}, it follows that $R_i(\bm Y, \bm y) \geq b_i$. Therefore, every defining constraint of $\mcge$ in \Cref{def:valid_approximation} is verified, and it follows that $(\bm Y, \bm y)\in \mcge$.
    
    To show part $(b)$, it suffices to prove that any point $(\bm Y, \bm y)$ that satisfies the claimed condition belongs to $\mc F_i$ for all $i\in [m]$. For $i\in \mc E$, the definition of $\mcge$ gives $R_i(\bm Y, \bm y) = b_i$ and $\bm W_i\bm Y \bm v_{\ell} + \bm H_i \bm v_{\ell} \geq 0$, so $\Yy \in \mc F_i$. If $i\notin \mc E$, then the claimed condition implies that $R_i(\bm Y, \bm y) > b_i$, which also verifies $\Yy\in \mc F_i$. Therefore, the point $\Yy$ is in $\mc F_i$ for all $i\in [m]$, and is also in $\mc F$.   
\end{proof}

From \Cref{prop:sandwich}, we know that any surrogate $\mcge$ arising from a valid $\mc E$ contains $\mc F$. Therefore, we may proceed as follows. We construct $\mc E$ incrementally while maintaining validity. At each iteration, we look inside $\mc E$ for a point that achieves $R_i\Yy>b_i$ for every $i \notin \mc E$. There are three exhaustive outcomes. First, such a point exists, in which case \Cref{prop:sandwich}$(b)$ certifies $\mathcal{F} \neq \emptyset$. Second, $\mcge$ contains points, but every point $\Yy$ of $\mcge$ has $R_i\Yy = b_i$ for some $i \notin \mc E$. In such case, we shall find at least one such row that is tight on all of $\mcge$, so we move that row into $\mc E$ and repeat. Third, $\mcge=\emptyset$, in which case \Cref{prop:sandwich}$(a)$ forces $\mathcal{F}=\emptyset$.

Our above discussion provides a framework for checking the locally linear feasibility of \eqref{eqn:linear_sys}. We adopt this framework and design \Cref{alg:check_llf} to check it. Specifically, in order to search for a point $\Yy\in \mcge$ with that $R_i(\bm Y, \bm y) > b_i$ for every $i\neq \mc E$, it suffices to check the feasibility of
\begin{equation}\label{eqn:lp_feasibility}\tag*{LP(\ensuremath{\mc E})}
    \begin{array}{rcl}
     t^\star_{\mc E} =  &\displaystyle\max_{t, \; \Yy} & t\\
       & {\rm s.t.} & R_i(\bm Y, \bm y)=b_i, \;\; \bm W_i\bm Y\bm v_{\ell} + \bm H_i \bm v_{\ell} \geq 0\quad \forall \ell\in [L], \; i\in \mc E,\\
        && R_i(\bm Y, \bm y) \geq b_i + t \quad \forall i \notin \mc E,\\
        && \Yy \in \mb R^{r\times d} \times \mb R^r, \; t\in \mb R_+.
    \end{array}
\end{equation}
If the above-defined \ref{eqn:lp_feasibility} is not feasible, then there is no non-negative $t$, \textit{e.g.}, $t=0$, to let $\Yy$ satisfy the constraints in \ref{eqn:lp_feasibility}. Therefore, the surrogate $\mcge = \emptyset$ by its definition, which implies $\mc F = \emptyset$ from \Cref{prop:sandwich}$(a)$.  If \ref{eqn:lp_feasibility} is feasible with an optimal value $t^\star_{\mc E} > 0$, then the desired point in \Cref{prop:sandwich}$(b)$ is found, and $\mc F \neq \emptyset$. The remaining case that \ref{eqn:lp_feasibility} is feasible with an optimal value $t^\star_{\mc E} = 0$ suggests that we find a row among $i\notin \mc E$ that is also tight for $R_i\Yy = b_i$ on all $\mcge$. This could be done by checking the optimal values of the linear programs
\begin{equation}\label{eqn:sub_lp_feasibility}\tag*{Sub-LP(\ensuremath{\mc E, i})}
    \sigma^\star_{\mc E, i} = \displaystyle\max_{\Yy}\left\{ R_i\Yy ~:~ \Yy\in \mcge\right\}.
\end{equation}

Specifically, those indices $i\notin \mc E$ in which $\sigma^\star_{\mc E, i} = b_i$ are new row indices that we add into $\mc E$. Hence, \Cref{alg:check_llf} check the locally linear feasibility for \eqref{eqn:linear_sys}. In the following, we provide the convergence result of \Cref{alg:check_llf}, which illustrates that locally linear feasibility in polynomial time.

\begin{algorithm}[t]
    \caption{Check locally linear feasibility of \eqref{eqn:linear_sys} at $\bzz$ of $\Xi$}
    \label{alg:check_llf}
    \begin{algorithmic}[1]
        \State {\bfseries Input and Initialization:} Matrices $\bm W, \bm H$ and vectors $\bm b$ in \eqref{eqn:linear_sys}. Vertex $\bzz$ of $\Xi$. Directions $\{\bm v_1, \ldots, \bm v_L\}$ of all edges that are incident to the vertex $\bzz$ on $\Xi$. Initialization $\mc E = \emptyset$. \label{line:input}
        \State {\bfseries Output:} Whether system~\eqref{eqn:linear_sys} is locally linear feasible at $\bzz$.
        \While{$|\mc E| < m$}
        \State Solve \ref{eqn:lp_feasibility} and get the feasibility status of this linear program.
        \If{\ref{eqn:lp_feasibility} is infeasible}
        \State \Return  ``not locally linear feasible at $\bzz$.'' \label{line:return_empty}
        \ElsIf{\ref{eqn:lp_feasibility} is feasible with $t^\star_{\mc E} > 0$}
        \State \Return  ``locally linear feasible at $\bzz$.''\label{line:return_non_empty}
        \Else
        \State Solve \ref{eqn:sub_lp_feasibility} for all $i\notin \mc E$ and collect the new tight set $\mc N = \{i\notin \mc E ~:~ \sigma^\star_{\mc E, i} = b_i\}$. \label{line:find_new_tight}
        \State Update the index set $\mc E \gets \mc E \cup \mc N$. \label{line:update_mce}
        \EndIf
        \EndWhile
        \State Solve \ref{eqn:lp_feasibility} and get the feasibility status of this linear program. \label{line:LPT}
        \If{\ref{eqn:lp_feasibility} is infeasible}
        \State \Return  ``not locally linear feasible at $\bzz$.''\label{line:return_cannot_find}
        \Else
        \State \Return  ``locally linear feasible at $\bzz$.''\label{line:return_can_find}
        \EndIf
    \end{algorithmic}
\end{algorithm}
\begin{theorem}\label{thm:alg_solve_llf}
    \Cref{alg:check_llf} has the following properties.
    \begin{itemize}
        \item[(a)](Termination) The while loop in \Cref{alg:check_llf} terminates in at most $m$ iterations, and the algorithm halts at one of Lines \ref{line:return_empty} and \ref{line:return_non_empty} in the loop, or at one of Lines \ref{line:return_cannot_find} and \ref{line:return_can_find} after the loop.

        \item[(b)](LP count) The algorithm solves at most $(m+2)(m+1)/2$ linear programs, each with at most $rd+r+1$ decision variables and $mL+m+1$ constraints, hence of size polynomial in $(m, r, d, L)$.

        \item[(c)](Correctness) The algorithm correctly determines local linear feasibility: it returns ``locally linearly feasible at $\bzz$'' if and only if \eqref{eqn:linear_sys} is locally linearly feasible at $\bzz$.
    \end{itemize}
\end{theorem}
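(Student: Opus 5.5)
The plan is to prove the three parts in the order (a), (b), (c). The central invariant, to be shown by induction on the iterations of the while loop, is that $\mc E$ stays valid in the sense of \Cref{def:valid_approximation}.

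For (a), the key claim is that whenever the loop reaches the else-branch, the new tight set $\mc N$ is nonempty. In that branch \ref{eqn:lp_feasibility} is feasible with $t^\star_{\mc E}=0$, so $\mcge\neq\emptyset$.

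\begin{itemize}
\item Suppose, for contradiction, that every $i\notin\mc E$ has $\sigma^\star_{\mc E,i}>b_i$. Then for each such $i$ there is a point of $\mcge$ with $R_i>b_i$. (If $\sigma^\star_{\mc E,i}=+\infty$, this still holds.)
\item Because $\mcge$ is convex, the average of these finitely many points lies in $\mcge$ and has $R_i>b_i$ for every $i\notin\mc E$. This uses that $R_i\ge b_i$ on $\mcge$ for $i\notin\mc E$ and that $R_i$ is affine.
\item That average gives a feasible point of \ref{eqn:lp_feasibility} with $t>0$, contradicting $t^\star_{\mc E}=0$.
\end{itemize}

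Hence $|\mc E|$ strictly increases in each non-returning iteration. The loop therefore runs at most $m$ times, and when it exits we have $\mc E=[m]$. After exit, Lines \ref{line:LPT}--\ref{line:return_can_find} certainly return. A side remark: if $t^\star_{\mc E}=+\infty$ (unbounded), I treat it as $t^\star_{\mc E}>0$.

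For (b), if the loop runs with $|\mc E|=k$, that iteration solves at most one \ref{eqn:lp_feasibility} and $m-k$ problems \ref{eqn:sub_lp_feasibility}. Since $k$ increases strictly, summing over iterations plus the final LP gives at most $\sum_{k=0}^{m}(m-k+1)=(m+1)(m+2)/2$ linear programs. The variables are $(\bm Y,\bm y,t)$, which number $rd+r+1$. The constraints are at most $m$ rows of type $R_i$, at most $mL$ edge constraints, and the sign constraint on $t$. These counts are routine.

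For (c), I first prove validity of $\mc E$ by induction. The empty set is trivially valid. Suppose $\mc E$ is valid and $i\in\mc N$. By \Cref{prop:sandwich}(a), $\mc F\subset\mcge$, so every $(\bm Y,\bm y)\in\mc F$ satisfies $R_i(\bm Y,\bm y)\le\sigma^\star_{\mc E,i}=b_i$. Membership in $\mc F_i$ gives $R_i\ge b_i$, so $R_i=b_i$ on $\mc F$. Thus $\mc E\cup\mc N$ is valid. Correctness of each return line then follows.

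\begin{itemize}
\item Line \ref{line:return_empty}: infeasibility means $\mcge=\emptyset$, and \Cref{prop:sandwich}(a) gives $\mc F=\emptyset$. \Cref{lem:llf_technical_condition} then says the system is not locally linear feasible.
\item Line \ref{line:return_non_empty}: an optimal (or any $t>0$) solution satisfies the hypothesis of \Cref{prop:sandwich}(b), so $\mc F\neq\emptyset$.
\item After the loop, $\mc E=[m]$, so the condition "$R_i>b_i$ for $i\notin\mc E$" is vacuous. Therefore $\mcge\subset\mc F$ by \Cref{prop:sandwich}(b), and $\mc F\subset\mcge$ by part (a) of the same proposition together with validity. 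Hence $\mc F=\mcge$, and feasibility of \ref{eqn:lp_feasibility} decides the answer exactly.
\end{itemize}

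The main subtlety I expect is the nonemptiness of $\mc N$ in part (a). The averaging argument needs each witness to be taken inside $\mcge$, and it needs care with unbounded Sub-LPs. I would handle this by noting that $\sigma^\star_{\mc E,i}>b_i$ in the extended sense always yields a finite witness point in $\mcge$.
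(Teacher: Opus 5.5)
Your proposal is correct and follows essentially the same route as the paper: validity of $\mathcal{E}$ by induction via Proposition~\ref{prop:sandwich}(a), nonemptiness of $\mathcal{N}$ by averaging witnesses taken inside $\mathcal{G}(\mathcal{E})$, the LP count $\sum_{k=0}^{m}(m-k+1)=(m+1)(m+2)/2$, and the squeeze $\mathcal{F}=\mathcal{G}([m])$ after the loop. Your explicit handling of unbounded values of $t^\star_{\mathcal{E}}$ and $\sigma^\star_{\mathcal{E},i}$ is a small clarification that the paper leaves implicit.
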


\begin{proof}[Proof of \Cref{thm:alg_solve_llf}]
    We divide our proof into three parts. First, we show that the index set $\mc E$ within each while loop iteration is always valid. Second, we prove that at Line~\ref{line:find_new_tight}, the new tight set $\mc N$ is non-empty. With the above results, we verify the contents of the theorem in the last part.
    
    We first check the validity of $\mc E$ using induction. Initially, $\mc E = \emptyset$. The condition $i\in \mc E$ is never met, so $\mc E$ is valid vacuously. Next, we assume that the current $\mc E$ is valid, and Line~\ref{line:update_mce} enlarges $\mc E$ to $\mc E^\prime = \mc E\cup \mc N$. Then for all $(\bm Y^0, \bm y^0)\in \mc F$, if we take any $i\in \mc E$, it holds that $R_i(\bm Y^0, \bm y^0) = b_i$ since $\mc E$ is valid by inductive assumption. If we take any $i\in \mc N$, then it implies $\sigma^\star_{\mc E, i} = b_i$. By the definition of \ref{eqn:sub_lp_feasibility}, it follows that $R_i\Yy \leq b_i$ for all $\Yy\in \mcge$. Hence, \Cref{prop:sandwich}$(a)$ implies that $R_i\Yy \leq b_i$ for all $\Yy\in \mc F$. Definition \eqref{eqn:mcf} of $\mc F_i$ implies that $R_i\Yy \geq b_i$ for all $\Yy\in \mc F$. Therefore, we have $R_i\Yy = b_i$ for all $\Yy\in \mc F$, which also implies $R_i(\bm Y^0, \bm y^0) = b_i$. Thus, $R_i(\bm Y^0, \bm y^0) = b_i$ holds for all $(\bm Y^0, \bm y^0)\in \mc F$ and all $i\in \mc E\cup \mc N$. Therefore, the updated $\mc E^\prime$ is also valid.

    Then, we consider the new tight set $\mc N$. We note that, at Line~\ref{line:find_new_tight}, we are provided that, $|\mc E| < m$, and the linear program \ref{eqn:lp_feasibility} is feasible with $t^\star_{\mc E} = 0$. We assume to the contrary that $\mc N = \{i\notin \mc E ~:~ \sigma^\star_{\mc E, i} = b_i\} = \emptyset$. Since $|\mc E| < m$, the complementary to $\mc E$ is non-empty, and for all those $i\notin \mc E$, it holds that $\sigma^\star_{\mc E, i} \neq b_i$. Since \ref{eqn:lp_feasibility} is feasible, we have $\mcge \neq \emptyset$, and the definition of $\mcge$ implies that $\sigma^\star_{\mc E, i} \geq b_i$ for all $i\notin \mc E$. Therefore, it follows $\sigma^\star_{\mc E, i} > b_i$ for all $i\notin \mc E$. Now, for any fixed $i\notin \mc E$, we pick $\Yyi\in \mcge$ with that $R_i\Yyi > b_i$. We define $\hatYy$ as average of $\Yyi$ outside $\mc E$ 
    \begin{equation*}
        \hatYy = \displaystyle\frac{1}{m-|\mc E|} \sum_{i\notin \mc E}\Yyi,
    \end{equation*}
    which is well-defined since $|\mc E| < m$. Since $\mcge$ is a convex and closed polyhedron, it follows that $\hatYy \in \mcge$. Noting that for any $j\notin \mc E$, $R_j$ is an affine function in $\Yy$, it follows that
    \begin{equation*}
        R_j\hatYy = \displaystyle\frac{1}{m-|\mc E|} \sum_{i\notin \mc E}R_j\Yyi = \frac{1}{m-|\mc E|}\left(R_j\Yyj + \sum_{i\notin \mc E, \; i\neq j}R_j\Yyi\right),
    \end{equation*}
    Now, we fix $j\notin \mc E$. By design, we have $R_j\Yyj > b_j$. For each $i\notin \mc E, i\neq j$, we have $R_j\Yyi \geq b_j$ due to $\Yyi\in \mcge$ and the definition of $\mcge$. Therefore, $R_j\hatYy > b_j$. Next, we take $\hat t = \min_{j\notin \mc E}R_j\hatYy - b_j > 0$, then $(\hat{\bm Y}, \hat{\bm y}, \hat t)$ is a feasible solution to \ref{eqn:lp_feasibility}. Noting that $\hat t>0$, it follows that the optimal value of \ref{eqn:lp_feasibility} is strictly positive, which contradicts the fact that \ref{eqn:lp_feasibility} is feasible with $t^\star_{\mc E} = 0$ when we come to Line~\ref{line:find_new_tight}. Therefore, the new tight set $\mc N$ is indeed non-empty.

    With the above result, we verify the properties of \Cref{alg:check_llf}. Each iteration of the while loop either terminates at Lines~\ref{line:return_empty} or~\ref{line:return_non_empty}, or enlarges $\mc E$. Since $\mc N$ is always non-empty, the index set $\mc E$ grows strictly monotonically. Thus, the while loop terminates in at most $m$ iterations. In each iteration, the algorithm solves \ref{eqn:lp_feasibility} once and \ref{eqn:sub_lp_feasibility} at most $m - |\mc E|$ times, giving a total of at most $(m+2)(m+1)/2$ linear programs. Among all of these, the largest instance is \ref{eqn:lp_feasibility} with $\mc E = [m]$ (solved at Line~\ref{line:LPT}), which has $rd+r+1$ decision variables and $mL+m+1$ constraints.

To verify correctness, we note that \Cref{alg:check_llf} halts at one of Lines~\ref{line:return_empty}, \ref{line:return_non_empty}, \ref{line:return_cannot_find}, and~\ref{line:return_can_find}. When \ref{eqn:lp_feasibility} is infeasible, we have $\mcge = \emptyset$ and hence $\mc F = \emptyset$ by \Cref{prop:sandwich}(a). Then \Cref{lem:llf_technical_condition} implies that \eqref{eqn:linear_sys} is not locally linearly feasible at $\bzz$. If \ref{eqn:lp_feasibility} is feasible with $t^\star_{\mc E} > 0$, then $\mc F \neq \emptyset$ by \Cref{prop:sandwich}(b), and \eqref{eqn:linear_sys} is locally linearly feasible at $\bzz$ by \Cref{lem:llf_technical_condition}. Now assume that \Cref{alg:check_llf} does not terminate at Line~\ref{line:return_empty} or~\ref{line:return_non_empty}. Then the while loop sequentially enlarges $\mc E$ and shrinks $\mcge$, until we reach Line~\ref{line:LPT} where \ref{eqn:lp_feasibility} is solved with $\mc E = [m]$. If \ref{eqn:lp_feasibility} is infeasible, the same argument as above shows that \eqref{eqn:linear_sys} is not locally linearly feasible at $\bzz$. If \ref{eqn:lp_feasibility} is feasible, then $\mcge \neq \emptyset$. Since $\mc E = [m]$, there is no index $i \notin \mc E$, so the condition $R_i(\bm Y, \bm y) > b_i$ for every $i \notin \mc E$ in \Cref{prop:sandwich}(b) is vacuous. Hence every $(\bm Y, \bm y) \in \mcge$ also belongs to $\mc F$, which together with \Cref{prop:sandwich}(a) gives $\mc F = \mcge \neq \emptyset$. Local linear feasibility of \eqref{eqn:linear_sys} at $\bzz$ then follows from \Cref{lem:llf_technical_condition}.   
\end{proof}

Theorem~\ref{thm:alg_solve_llf} shows that locally linear feasibility can be verified in polynomial time once the local geometry at the vertex is available. Here, the need for geometric information about the support is not specific to locally linear feasibility. Even the milder condition of global feasibility (equivalently, condition A4a in \Cref{thm:asympt_opt}) is difficult to verify directly when the support is specified only through an H-representation. As the proof of Lemma~\ref{lem:gf_charac} reveals, without access to the vertices and recession directions of $\Xi$, checking global feasibility reduces to determining boundedness of the optimal value of a disjoint bilinear optimization problem, which is strongly NP-complete \citep{audet1999symmetrical}.


Therefore, the algorithm should be understood as a local verification procedure under explicit geometric input. This is useful in many settings. For instance, when the uncertainty dimension is low, the vertices and edge directions of the support can often be enumerated by standard polyhedral software \citep{gawrilow2000polymake}. In this case, Algorithm~\ref{alg:check_llf} gives a direct way to certify or refute A4b at each vertex. Moreover, for many structured uncertainty sets, the edge directions at a vertex can be derived analytically without full vertex enumeration. In such cases, the algorithm can be applied using only the relevant local structure. These observations motivate the case studies in the next section. We show how the dichotomy and the verification algorithm apply to support sets that arise in two-stage optimization. The examples illustrate both sides of the theory: when simplicity makes A4b automatic, and when non-simple supports require an explicit local check.

\section{Case Study}\label{sec:casestudy}
In this section, we conduct a case study on the two-stage multi-item newsvendor problem. We test the efficiency and scalability of \Cref{alg:check_llf} in verifying condition A4b, and further validate the asymptotic convergence of sample robust optimization when $\Xi$ is simple. All linear programs arising in our algorithm are solved using Gurobi 12.0.1, and all experiments are run on a MacBook with a 2.3 GHz 4-core Intel Core i7 CPU and 32 GB of 3733 MHz DDR4 memory.

We consider a multi-item newsvendor problem studied in \cite{ardestani2016robust}. A retailer stocks $n\geq 3$ items for a single selling season. Before the demand of each item is observed, the retailer orders $x_j\geq 0$ units of item $j\in [n]$ at unit cost $c_j$ with the maximum ordering $\bar x_j$. After the ordering, the demand $w_j$ is realized and observed for item $j$. The retailer then sells $u_j$ units at unit price $v_j$, salvages leftover $s_j = x_j - u_j$ at unit price $g_j$, and pays a unit stock-out penalty $p_j$ on the unmet demand $z_j = w_j - u_j$ for item $j$. We follow the standard assumption $g_j < c_j < v_j$ of the newsvendor literature, so salvaging is never profitable, and every unit that can be sold is sold. 

Following \cite{ardestani2016robust}, we assume that the uncertain demand $\tilde w_j$ for item $j$ is given by the form of $\tilde w_j = \bar w_j + \hat w_j \tilde \xi_j$, where $\bar w_j$ is the point forecast of demand, and $\hat w_j \leq \bar w_j$ is its forecast scale for item $j$. The uncertainty dimension is therefore $d=n$, the number of items. In \cite{ardestani2016robust}, the uncertain $\bm \xi$ is treated as a fixed but unknown parameter in the two-sided budgeted uncertainty set \citep{bertsimas2004price,ben2004adjustable}
\begin{equation}\label{eqn:two_sided_budget}
    \Xi = \left\{\bm \xi \in \mb R^n ~:~ \|\bm \xi\|_{\infty} \leq 1, \; \|\bm \xi\|_{1} \leq \Gamma\right\},
\end{equation}
where $\Gamma\in (0, n)$ is a given budget. We focus on the case $\Gamma\in (0, n)$, as otherwise $\Xi$ reduces to a box and is trivially simple. We now use a distributionally robust approach that treats $\tilde{\bm \xi}$ as a random variable governed by an unknown distribution in the ambiguity set $\mathcal W\subset \mathcal P(\Xi)$. Writing $\bm y = [\bm u^\top \bm s^\top \bm z^\top]^\top \in \mb R^{3n}$, then the two-stage multi-item newsvendor problem is given by
\begin{equation}\label{eqn:dro_lp}
    \min_{\bm 0 \leq \bm{x} \leq \bar{\bm x}} \bm{c}^{\top} \bm{x} + \sup_{\mb P\in \mc W}\mathbb{E}_{\tilde{\bm \xi}\sim \mb P} \left[Q(\bm{x}, \tilde{\bm \xi})\right],
\end{equation}
where the second-stage problem is given by
\begin{equation*}
   Q(\bm x, \bm \xi) = \left[\begin{array}{cl}
        \displaystyle\min_{\bm u, \bm s, \bm z} & -
        \bm v^\top \bm u - \bm g^\top \bm s + \bm p^\top \bm z\\
        {\rm s.t.}& u_j + s_j \leq x_j, \;\; u_j + z_j \geq \bar w_j + \hat w_j \xi_j, \;\; u_j \leq \bar w_j + \hat w_j \xi_j \quad \forall j\in [n]\\
        &\bm u\geq \bm 0, \bm s\geq \bm 0, \bm z\geq \bm 0
    \end{array}\right],
\end{equation*}
and $\mc W$ is the type-$\infty$ Wasserstein ambiguity set centered at the empirical distribution
\begin{equation}\label{eqn:ambiguity_set}
    \mc W = \left\{\mb P\in \mc P(\Xi) ~:~ {\rm d}_{\infty} (\mb P, \hat{\mb P}^N) \leq \epsilon_N\right\}.
\end{equation}
Here, the type-$\infty$ Wasserstein distance is given by \citep{bertsimas2023data, villani2009optimal}
\begin{equation*}
    {\rm d}_{\infty} (\mb P, \hat{\mb P}^N) = \inf\left\{\mathop{\Pi\text{-ess sup}}_{\Xi\times \Xi} \|\bm \xi - \bm \xi^\prime\| ~:~ \begin{array}{ll}
        &\Pi \text { is a joint distribution of } \boldsymbol{\xi} \text { and } \boldsymbol{\xi}^{\prime} \\
        &\text {with marginals } \mathbb{P} \text { and } \hat{\mb P}^N, \text { respectively } 
    \end{array}\right\},
\end{equation*}
where the essential supremum of the joint distribution $\Pi$ is provided by
\begin{equation*}
    \mathop{\Pi\text{-ess sup}}_{\Xi\times \Xi} \|\bm \xi - \bm \xi^\prime\| = \inf \left\{M: \Pi\left(\left\|\boldsymbol{\xi}-\boldsymbol{\xi}^{\prime}\right\|>M\right)=0\right\},
\end{equation*}
and the empirical distribution $\hat{\mb P}^N = \sum_{i\in [N]} {\rm Dirac}(\bm \xi^i)/N$ is supported on $N$ given samples $\{\bm \xi^1, \ldots, \bm \xi^N\}$ of $\tilde{\bm \xi}$. By Proposition 3 in \cite{bertsimas2023data}, the above multi-item newsvendor problem can be equivalently reformulated as the sample robust optimization problem \eqref{eqn:sample_robust} with
\begin{equation*}
    \bm q= \begin{bmatrix}
        -\bm v\\
        -\bm g\\
        \bm b
    \end{bmatrix}, \; \bm T = \begin{bmatrix}
        \bm I_n\\
        \bm O_{5n\times n}\\
        \bm I_n\\
        -\bm I_n
    \end{bmatrix}, \; \bm W = \begin{bmatrix}
        \begin{bmatrix}
       -\bm I_n & -\bm I_n & \bm O_n\\
        \bm I_n & \bm O_n & \bm I_n\\
        -\bm I_n & \bm O_n & \bm O_n
    \end{bmatrix}\\
    \bm I_{3n}\\
    \bm O_{2n\times 3n}
    \end{bmatrix}, \; \bm h(\bm \xi) = -\begin{bmatrix}
        \bm O_n\\
        -{\rm Diag}(\hat{\bm w})\\
        {\rm Diag}(\hat{\bm w})\\
        \bm O_{5n\times n}
    \end{bmatrix}\bm \xi + \begin{bmatrix}
        \bm 0_n\\
        \bar{\bm w}\\
        -\bar{\bm w}\\
        \bm 0_{4n}\\
        -\bar{\bm x}
    \end{bmatrix},
\end{equation*}
and $m=8n$, $r=3n$, and $d=n$. In the following, we conduct numerical experiments to validate the efficiency of \Cref{alg:check_llf} and verify the asymptotic convergence of two-stage sample robust optimization. Specifically, we consider the multi-item newsvendor problem with $n\in\{3,4,5,6,8,10\}$ items, and set the budget in~\eqref{eqn:two_sided_budget} to $\Gamma = \rho n$ with $\rho\in\{0.5,0.65,0.8,0.95\}$. The remaining data $(\bm c, \bar{\bm x}, \bm v, \bm g, \bm p, \bar{\bm w}, \hat{\bm w})$ of the newsvendor problem are randomly generated from 
\begin{equation*}
    \begin{array}{cl}
       v_j\sim U[10,20],\quad
  c_j\sim U[0.4,0.7]\cdot v_j,\quad
  g_j\sim U[0.1,0.9]\cdot c_j,\quad
  p_j\sim U[0.2,0.8]\cdot v_j \quad & \forall j\in [n],\\
  \bar w_j\sim U[4,6],\qquad
  \hat w_j\sim U[0.3,0.8]\cdot\bar w_j,\qquad
  \bar x_j=10 \qquad & \forall j\in [n],
    \end{array}
\end{equation*}
which enforces $g_j<c_j<v_j$ and $0\le\hat w_j\le\bar w_j$, so that the realized demand $\bm w=\bar{\bm w} + \hat{\bm w}\circ \bm \xi$ is nonnegative on $\Xi$ and the salvage value never exceeds the ordering cost. The true distribution of $\tilde{\bm \xi}$, which is unknown to the optimizer, is taken to be a truncated Gaussian distribution on $\Xi$.

\begin{table}[htbp]
    \centering
    \caption{Numerical results and runtime for verifying A4b on the two-sided budget uncertainty set. Here, a tick (\cmark) indicates that $\Xi$ is simple or that A4b holds; a cross (\xmark) indicates the opposite.}
    \label{tab:newsvendor_a4b}
    \begin{tabular}{r @{\hspace{1.5em}} r @{\hspace{1.5em}} r @{\hspace{1.5em}} r @{\hspace{1.5em}} c @{\hspace{1.5em}} r @{\hspace{1.5em}} r @{\hspace{1.5em}} c @{\hspace{1.5em}} r @{\hspace{1.5em}} r}
        \toprule
        &&&&&&&& \multicolumn{2}{c}{runtime} \\
        \cmidrule(l){9-10}
        $n$ & $\rho$ & $\Gamma$ & $k$ & simple & \#edges & \#vertices & A4b & per vertex (ms) & total (s)\\
        \midrule
        3  & 0.50 & 1.50 & 1 & \cmark & 3  & 24        & \cmark & 0.4467 & 0.0107 \\
        3  & 0.65 & 1.95 & 1 & \cmark & 3  & 24        & \cmark & 0.4547 & 0.0109 \\
        3  & 0.80 & 2.40 & 2 & \cmark & 3  & 24        & \cmark & 0.4954 & 0.0119 \\
        3  & 0.95 & 2.85 & 2 & \cmark & 3  & 24        & \cmark & 0.5076 & 0.0122 \\
        \midrule
        4  & 0.50 & 2.00 & 2 & \xmark & 8  & 24        & \cmark & 0.5541 & 0.0133 \\
        4  & 0.65 & 2.60 & 2 & \cmark & 4  & 96        & \cmark & 0.4785 & 0.0459 \\
        4  & 0.80 & 3.20 & 3 & \cmark & 4  & 64        & \cmark & 0.4875 & 0.0312 \\
        4  & 0.95 & 3.80 & 3 & \cmark & 4  & 64        & \cmark & 0.4764 & 0.0305 \\
        \midrule
        5  & 0.50 & 2.50 & 2 & \xmark & 6  & 240       & \cmark & 0.5392 & 0.1294 \\
        5  & 0.65 & 3.25 & 3 & \cmark & 5  & 320       & \cmark & 0.5391 & 0.1725 \\
        5  & 0.80 & 4.00 & 4 & \xmark & 8  & 80        & \cmark & 0.6106 & 0.0489 \\
        5  & 0.95 & 4.75 & 4 & \cmark & 5  & 160       & \cmark & 0.5720 & 0.0915 \\
        \midrule
        6  & 0.50 & 3.00 & 3 & \xmark & 18 & 160       & \cmark & 0.7634 & 0.1221 \\
        6  & 0.65 & 3.90 & 3 & \xmark & 7  & 960       & \cmark & 0.6410 & 0.6154 \\
        6  & 0.80 & 4.80 & 4 & \cmark & 6  & 960       & \cmark & 0.6447 & 0.6189 \\
        6  & 0.95 & 5.70 & 5 & \cmark & 6  & 384       & \cmark & 0.6467 & 0.2483 \\
        \midrule
        8  & 0.50 & 4.00 & 4 & \xmark & 32 & 1{,}120   & \cmark & 1.4639 & 1.6396 \\
        8  & 0.65 & 5.20 & 5 & \xmark & 9  & 10{,}752  & \cmark & 0.9473 & 10.1849 \\
        8  & 0.80 & 6.40 & 6 & \cmark & 8  & 7{,}168   & \cmark & 0.9424 & 6.7548 \\
        8  & 0.95 & 7.60 & 7 & \cmark & 8  & 2{,}048   & \cmark & 0.9484 & 1.9423 \\
        \midrule
        10 & 0.50 & 5.00 & 5 & \xmark & 50 & 8{,}064   & \cmark & 2.3397 & 18.8670 \\
        10 & 0.65 & 6.50 & 6 & \xmark & 12 & 107{,}520 & \cmark & 1.4577 & 156.7300 \\
        10 & 0.80 & 8.00 & 8 & \xmark & 32 & 11{,}520  & \cmark & 2.1304 & 24.5423 \\
        10 & 0.95 & 9.50 & 9 & \cmark & 10 & 10{,}240  & \cmark & 1.4514 & 14.8627 \\
        \bottomrule
    \end{tabular}
\end{table}

Local linear feasibility is a property of the recourse system at a fixed first-stage decision. We therefore evaluate it at the interior point $\hat{\bm x}=5\bm e$, for which $\bm b=\bm h^{0}-\bm T \hat{\bm x}$. The results and runtimes are reported in \Cref{tab:newsvendor_a4b}. Notably, condition~A4b holds across all instances, even when $\Xi$ is not simple. This is because in the second-stage problem, an affine decision $\hat{\bm y}$, which is defined as 
\begin{equation*}
    \hat{\bm y} = \begin{bmatrix}
        \hat{\bm u}\\
        \hat{\bm s}\\
        \hat{\bm z}
    \end{bmatrix} = \begin{bmatrix}
        \bm 0\\
       \bm 0\\
        \bar{\bm w} + {\rm Diag}(\hat{\bm w}) \bm \xi
    \end{bmatrix},
\end{equation*}
is always feasible. Therefore, our \Cref{alg:check_llf} correctly certifies whether assumption~A4b holds across all instances. Moreover, the verification cost decomposes cleanly into a per-vertex factor and a combinatorial factor. The per-vertex time remains within a few milliseconds across all instances, and is independent of $\Gamma$ except through the edge count $L$. Indeed, \Cref{thm:alg_solve_llf} shows that, at fixed $n$ (and hence fixed $m,r,d$ for the multi-item newsvendor problem), the linear programs solved in \Cref{alg:check_llf} have dimensions that depend on $\Gamma$ only via $L$. This is visible at $n=10$, where increasing $\Gamma$ does not necessarily increase the per-vertex runtime; rather, the per-vertex runtime rises with $L$. The total cost, by contrast, is governed entirely by the number of vertices, which is a combinatorial quantity and by no means monotone in $\Gamma$. For instance, the instances $n=10,\Gamma=6.5$ and $n=10,\Gamma=9.5$ exhibit similar per-vertex runtimes (since their edge counts are comparable), yet their total times differ substantially: the former has $107{,}520$ vertices, far exceeding the latter. Therefore, the dominant computational burden of verifying A4b in practice comes from the vertex count of $\Xi$. Consequently, modelling with a simple polytope, or more generally one with few vertices, yields substantial gains in verifying A4b by our \Cref{alg:check_llf}.

\begin{figure}[t]
    \centering
    \includegraphics[width=1.0\textwidth]{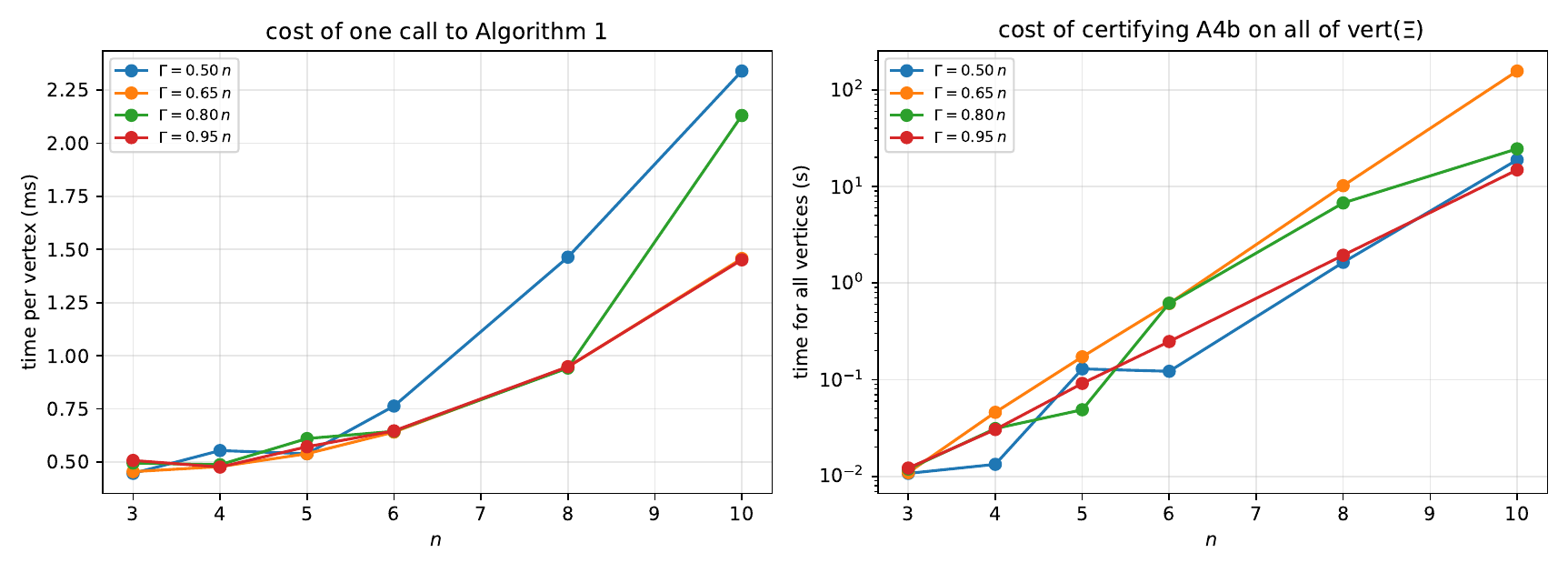}
    \caption{Runtime for verifying A4b on the two-sided budget uncertainty set.}
    \label{fig:a4b_timing}
\end{figure}

\Cref{fig:a4b_timing} reports the time of a single call to \Cref{alg:check_llf} (left panel) and the total time required to certify A4b at \emph{all} vertices of $\Xi$ (right panel). In the left panel, the four curves remain close and grow slowly up to $n=6$. They separate at $n=8$ and $n=10$, because the edge count $L$ begins to differ across values of $\Gamma$ and the per-vertex runtime rises with $L$, as noted above. The curve $\Gamma=0.5n$ lies highest, since $L$ is largest there. The right panel is plotted on a logarithmic scale. The curves are roughly
straight on this scale, indicating that the total cost grows geometrically in $n$. Taken together, the two panels convey the practical message of this experiment: the expensive ingredient in certifying A4b is not any single call to \Cref{alg:check_llf}, which solves a linear program of polynomial size, but rather the number of vertices at which \Cref{alg:check_llf} is called. When A4b must be certified for larger $n$, the effort is therefore best directed towards modelling with a simple or few-vertex uncertainty set, rather than towards accelerating the individual feasibility test.

\begin{table}[t]
    \centering
    \caption{First-stage order quantities $x_i$ obtained by solving~\eqref{eqn:multi_policy} for varying sample size $N$ with the reference solution $\bm x^{\star}$.}
    \label{tab:newsvendor_convergence}
    \setlength{\tabcolsep}{5.4pt}
    \begin{tabular}{{c}r *{9}{c}}
        \toprule
         & item 1 & item 2 & item 3 & item 4 & item 5 & item 6 & item 7 & item 8 & item 9 & item 10 \\
        \midrule
        $N=2$            & 5.750 & 5.308 & 3.442 & 5.402 & 4.098 & 5.112 & 5.453 & 5.330 & 4.337 & 4.808 \\
        $N=5$            & 5.965 & 5.286 & 5.021 & 5.734 & 5.724 & 6.048 & 5.590 & 6.740 & 4.313 & 6.790 \\
        $N=10$           & 5.726 & 5.155 & 5.448 & 5.728 & 5.736 & 5.560 & 5.681 & 7.810 & 4.964 & 6.909 \\
        $N=20$           & 5.792 & 5.220 & 5.572 & 5.578 & 5.659 & 6.222 & 5.661 & 7.747 & 5.122 & 6.599 \\
        $N=40$           & 5.296 & 5.266 & 5.639 & 5.830 & 5.674 & 6.300 & 6.038 & 7.939 & 5.020 & 6.390 \\
        $N=80$           & 5.635 & 5.521 & 6.021 & 5.671 & 5.614 & 6.402 & 6.328 & 7.968 & 5.080 & 6.454 \\
        $N=160$          & 5.354 & 5.452 & 6.063 & 5.663 & 5.574 & 6.361 & 6.096 & 7.946 & 5.077 & 6.484 \\
        $N=320$          & 5.542 & 5.385 & 5.909 & 5.581 & 5.667 & 6.379 & 6.120 & 7.768 & 5.087 & 6.413 \\
        $N=640$          & 5.631 & 5.327 & 6.151 & 5.607 & 5.641 & 6.490 & 6.042 & 7.877 & 5.074 & 6.364 \\
        \midrule
        $\bm x^{\star}$ & 5.713 & 5.305 & 6.265 & 5.542 & 5.601 & 6.563 & 5.996 & 7.919 & 5.116 & 6.448 \\
        \bottomrule
    \end{tabular}
\end{table}

To test the asymptotic optimality of two-stage sample robust optimization, we solve the multi-policy approximation~\eqref{eqn:multi_policy} for the multi-item newsvendor problem with $n=10$ and $\Gamma=8.5$, varying the sample size $N$. We set $\epsilon_N=N^{-1/2}$, so that assumption~A1 of \Cref{thm:asympt_opt} is satisfied. In this case, the budgeted uncertainty set $\Xi$ is simple, and the other assumptions in \Cref{thm:asympt_opt} to ensure asymptotic optimality are satisfied due to \Cref{coro:simple_poly}. As a benchmark, we also solve the stochastic problem~\eqref{eqn:two_stage} by sample average approximation (SAA) with $500{,}000$ samples, and treat the resulting value as the true optimum. \Cref{tab:newsvendor_convergence} reports the decisions $\hat{\bm x}_N$ obtained from~\eqref{eqn:multi_policy} together with the reference decision $\bm x^\star$ from SAA. As shown in \Cref{tab:newsvendor_convergence}, the first-stage decisions $\hat{\bm x}_N$ converge to the reference $\bm x^\star$ as $N$ grows. \Cref{fig:newsvendor_convergence} then displays the optimal values of~\eqref{eqn:multi_policy} and the out-of-sample costs incurred by deploying $\hat{\bm x}_N$ and illustrates that both of them converge to $v^\star$ within a modest sample size of $N=640$. Together, these results demonstrate both the asymptotic optimality and the sample efficiency of the two-stage sample robust optimization.

\begin{figure}[t]
    \centering
    \includegraphics[width=1.0\textwidth]{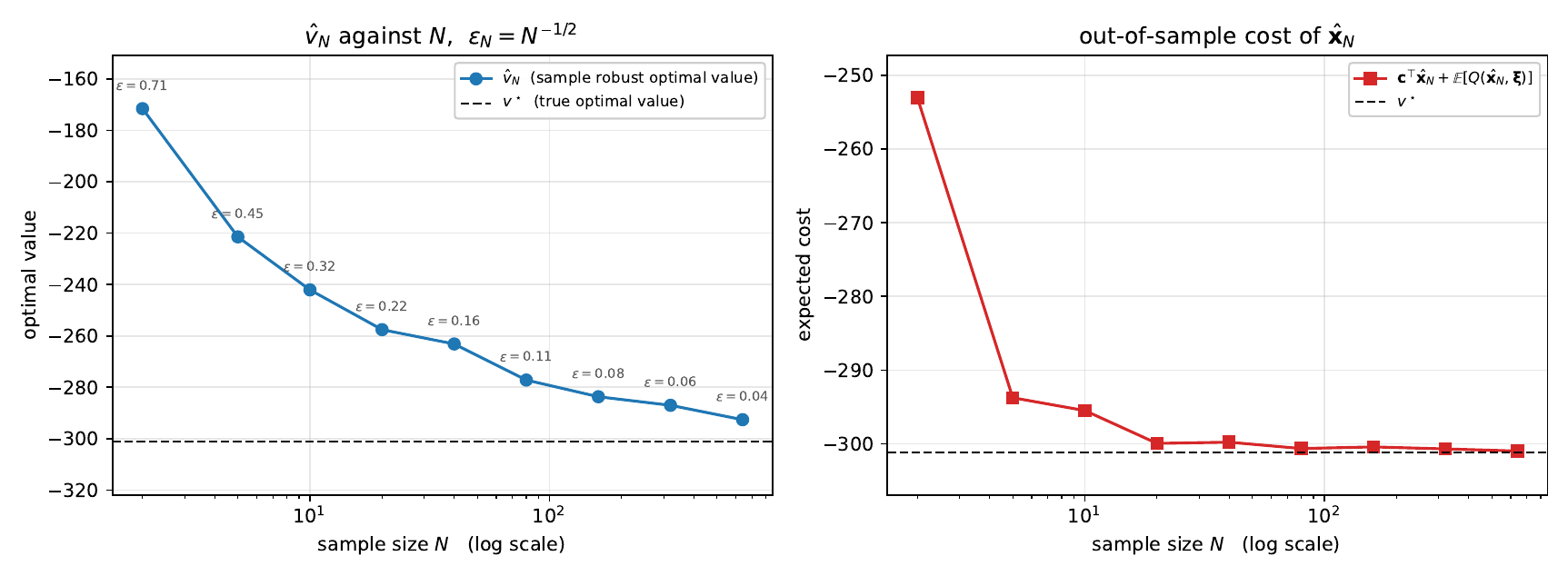}
    \caption{Optimal values and out-of-sample costs of multi-policy approximation~\eqref{eqn:multi_policy} with varying sample size $N$.}
    \label{fig:newsvendor_convergence}
\end{figure}

\section{Conclusion}\label{sec:discussion}
This paper resolves an open question of \cite{bertsimas2022two} on the asymptotic optimality of two-stage sample robust optimization with linear decision rules. We reformulated the question as a purely polyhedral one: when does global feasibility of a recourse system on the support imply locally linear feasibility at every vertex of it? The answer is governed by a single geometric feature of the support set. If the support is a simple polyhedron, the implication holds for every recourse system, so the technical condition is redundant and asymptotic optimality follows from the remaining mild conditions alone. If any vertex of the support is non-simple, the implication fails: we exhibited an explicit three-dimensional instance and then showed, by an interpolation argument on the tangent cone, that every polyhedral support with a non-simple vertex admits a recourse system with the same failure. The open question is therefore answered in the negative, and non-simpleness is exactly the obstruction. Since the guarantee fails only at non-simple vertices, and since whether it does fail is decidable from local data, we complemented the dichotomy with a verification algorithm. The algorithm certifies locally linear feasibility at a given vertex in polynomial time, using only the recourse system and the directions of the incident edges. Our case study validates the effectiveness and scalability of the verification algorithm in checking the technical condition.

\linespread{1}
\small

\bibliographystyle{plainnat}
\bibliography{references}

@article{bertsimas2012power,
  title={On the power and limitations of affine policies in two-stage adaptive optimization},
  author={Bertsimas, Dimitris and Goyal, Vineet},
  journal={Mathematical Programming},
  volume={134},
  number={2},
  pages={491--531},
  year={2012},
  publisher={Springer}
}

@article{bertsimas2022two,
  title={Two-stage sample robust optimization},
  author={Bertsimas, Dimitris and Shtern, Shimrit and Sturt, Bradley},
  journal={Operations Research},
  volume={70},
  number={1},
  pages={624--640},
  year={2022},
  publisher={INFORMS}
}

@article{bertsimas2023data,
  title={A data-driven approach to multistage stochastic linear optimization},
  author={Bertsimas, Dimitris and Shtern, Shimrit and Sturt, Bradley},
  journal={Management Science},
  volume={69},
  number={1},
  pages={51--74},
  year={2023},
  publisher={INFORMS}
}

@book{rockafellar1970convex,
  title={Convex Analysis},
  author={Rockafellar, RT},
  publisher={Princeton University Press},
  year={1970}
}

@book{conforti2014integer,
  author={Conforti, Michele and Cornu{\'e}jols, G{\'e}rard and Zambelli, Giacomo},
  title={Integer Programming},
  year={2014},
  publisher={Springer, Berlin}
}

@book{ziegler2012lectures,
  title={Lectures on polytopes},
  author={Ziegler, G{\"u}nter M},
  volume={152},
  year={2012},
  publisher={Springer Science \& Business Media}
}

@article{audet1999symmetrical,
  title={A symmetrical linear maxmin approach to disjoint bilinear programming},
  author={Audet, Charles and Hansen, Pierre and Jaumard, Brigitte and Savard, Gilles},
  journal={Mathematical Programming},
  volume={85},
  number={3},
  pages={573--592},
  year={1999},
  publisher={Amsterdam: North-Holland, 1971-}
}

@book{aliprantis2007cones,
  title={Cones and duality},
  author={Aliprantis, Charalambos D and Tourky, Rabee},
  volume={84},
  year={2007},
  publisher={American Mathematical Soc.}
}

@article{dantzig1955linear,
  title={Linear programming under uncertainty},
  author={Dantzig, George B},
  journal={Management Science},
  volume={1},
  number={3-4},
  pages={197--206},
  year={1955},
  publisher={Informs}
}

@article{beale1955minimizing,
  title={On minimizing a convex function subject to linear inequalities},
  author={Beale, Evelyn ML},
  journal={Journal of the Royal Statistical Society Series B: Statistical Methodology},
  volume={17},
  number={2},
  pages={173--184},
  year={1955},
  publisher={Oxford University Press}
}

@book{birge1997introduction,
  title={Introduction to stochastic programming},
  author={Birge, John R and Louveaux, Francois},
  year={1997},
  publisher={Springer}
}

@article{nishizaki2023two,
  title={A two-stage linear production planning model with partial cooperation under stochastic demands},
  author={Nishizaki, Ichiro and Hayashida, Tomohiro and Sekizaki, Shinya and Furumi, Kojiro},
  journal={Annals of Operations Research},
  volume={320},
  number={1},
  pages={293--324},
  year={2023},
  publisher={Springer}
}

@article{yu2025value,
  title={On the value of risk-averse multistage stochastic programming in capacity planning},
  author={Yu, Xian and Shen, Siqian},
  journal={INFORMS Journal on Computing},
  volume={37},
  number={5},
  pages={1143--1162},
  year={2025},
  publisher={INFORMS}
}

@article{koutsokosta2024stochastic,
  title={Stochastic transitions of a mixed-integer linear programming model for the construction supply chain: chance-constrained programming and two-stage programming},
  author={Koutsokosta, Aspasia and Katsavounis, Stefanos},
  journal={Operational Research},
  volume={24},
  number={3},
  pages={46},
  year={2024},
  publisher={Springer}
}

@article{sen2016mitigating,
  title={Mitigating uncertainty via compromise decisions in two-stage stochastic linear programming: Variance reduction},
  author={Sen, Suvrajeet and Liu, Yifan},
  journal={Operations Research},
  volume={64},
  number={6},
  pages={1422--1437},
  year={2016},
  publisher={INFORMS}
}

@article{berkelaar2002primal,
  title={A primal-dual decomposition-based interior point approach to two-stage stochastic linear programming},
  author={Berkelaar, Arjan and Dert, Cees and Oldenkamp, Bart and Zhang, Shuzhong},
  journal={Operations Research},
  volume={50},
  number={5},
  pages={904--915},
  year={2002},
  publisher={INFORMS}
}

@article{gangammanavar2021stochastic,
  title={Stochastic decomposition for two-stage stochastic linear programs with random cost coefficients},
  author={Gangammanavar, Harsha and Liu, Yifan and Sen, Suvrajeet},
  journal={INFORMS Journal on Computing},
  volume={33},
  number={1},
  pages={51--71},
  year={2021},
  publisher={INFORMS}
}

@article{pasupathy2021adaptive,
  title={Adaptive sequential sample average approximation for solving two-stage stochastic linear programs},
  author={Pasupathy, Raghu and Song, Yongjia},
  journal={SIAM Journal on Optimization},
  volume={31},
  number={1},
  pages={1017--1048},
  year={2021},
  publisher={SIAM}
}

@article{hanasusanto2018conic,
  title={Conic programming reformulations of two-stage distributionally robust linear programs over {W}asserstein balls},
  author={Hanasusanto, Grani A and Kuhn, Daniel},
  journal={Operations Research},
  volume={66},
  number={3},
  pages={849--869},
  year={2018},
  publisher={INFORMS}
}

@article{xie2020tractable,
  title={Tractable reformulations of two-stage distributionally robust linear programs over the type-$\infty$ {W}asserstein ball},
  author={Xie, Weijun},
  journal={Operations Research Letters},
  volume={48},
  number={4},
  pages={513--523},
  year={2020},
  publisher={Elsevier}
}

@inproceedings{feige2007robust,
  title={Robust combinatorial optimization with exponential scenarios},
  author={Feige, Uriel and Jain, Kamal and Mahdian, Mohammad and Mirrokni, Vahab},
  booktitle={International Conference on Integer Programming and Combinatorial Optimization},
  pages={439--453},
  year={2007},
  organization={Springer}
}

@book{schrijver1998theory,
  title={Theory of linear and integer programming},
  author={Schrijver, Alexander},
  year={1998},
  publisher={John Wiley \& Sons}
}

@article{bertsimas2004price,
  title={The price of robustness},
  author={Bertsimas, Dimitris and Sim, Melvyn},
  journal={Operations Research},
  volume={52},
  number={1},
  pages={35--53},
  year={2004},
  publisher={Informs}
}

@inproceedings{gawrilow2000polymake,
  title={Polymake: A framework for analyzing convex polytopes},
  author={Gawrilow, Ewgenij and Joswig, Michael},
  booktitle={Polytopes—combinatorics and computation},
  pages={43--73},
  year={2000},
  organization={Springer}
}

@article{ben2004adjustable,
  title={Adjustable robust solutions of uncertain linear programs},
  author={Ben-Tal, Aharon and Goryashko, Alexander and Guslitzer, Elana and Nemirovski, Arkadi},
  journal={Mathematical programming},
  volume={99},
  number={2},
  pages={351--376},
  year={2004},
  publisher={Springer}
}

@article{ardestani2016robust,
  title={Robust optimization of sums of piecewise linear functions with application to inventory problems},
  author={Ardestani-Jaafari, Amir and Delage, Erick},
  journal={Operations Research},
  volume={64},
  number={2},
  pages={474--494},
  year={2016},
  publisher={INFORMS}
}

@book{villani2009optimal,
  title={Optimal transport: Old and new},
  author={Villani, C{\'e}dric},
  volume={338},
  year={2009},
  publisher={Springer}
}

\linespread{1.5}
\normalsize
\end{document}